\def\mapright#1{\smash{\mathop{\longrightarrow}\limits^{#1}}}
\newtheorem{theorem}{Theorem}
\newtheorem{corollary}{Corollary}
\newtheorem{lemma}{Lemma}
\newtheorem{goals}{Goals}
\newtheorem{assumption}{Assumption}
\newtheorem{proposition}{Proposition}
\theoremstyle{definition}
\newtheorem{remark}{Remark}
\def\supp{\mathop{\rm supp}}
\def\diam{\mathop{\rm diam}}
\def\sp{\mathop{\rm sp}}
\def\codim{\mathop{\rm codim}}
\def\sech{{\rm sech}}
\def\Dx{\mathcal{D}_{X,\epsilon}}
\def\Dy{\mathcal{D}_{Y'_\epsilon,\epsilon}}
\def\Dxd{\mathcal{D}_{\dot{X},\epsilon}}
\def\Dyd{\mathcal{D}_{\dot{Y}'_\epsilon,\epsilon}}
\def\P{\mathcal{P}}
\def\Pe{\mathcal{P}_\epsilon}
\def\Le{\mathcal{L}_\epsilon}
\def\Led{\dot{\mathcal{L}}_\epsilon}
\def\ax{\alpha_{X,\epsilon}}
\def\ay{\alpha_{Y,\epsilon}}
\title{An analytic framework for identifying finite-time coherent sets in time-dependent dynamical systems}
\author{Gary Froyland\footnote{email: g.froyland@unsw.edu.au, tel: +61 2 9385 7050, fax: +61 2 9385 7123.} \\ School of Mathematics and Statistics \\ University of New South Wales \\ Sydney NSW 2052, Australia}
\begin{document}
\maketitle
\begin{abstract}
The study of transport and mixing processes in dynamical systems is particularly important for the analysis of mathematical models of physical systems.
Barriers to transport, which mitigate mixing, are currently the subject of intense study.
In the autonomous setting, the use of transfer operators (Perron-Frobenius operators) to identify invariant and almost-invariant sets has been particularly successful.
In the nonautonomous (time-dependent) setting, \emph{coherent sets}, a time-parameterised family of minimally dispersive sets, are a natural extension of almost-invariant sets.
The present work introduces a new analytic transfer operator construction that enables the calculation of \emph{finite-time} coherent sets (sets are that minimally dispersive over a finite time interval).
This new construction also elucidates the role of diffusion in the calculation and we show how properties such as the spectral gap and the regularity of singular vectors scale with noise amplitude.
The construction can also be applied to general Markov processes on continuous state space.
\end{abstract}
\section{Introduction}

Transport and mixing processes \cite{Meiss1992,aref_02,wiggins_05} play an important role in many
natural phenomena and their mathematical analysis has received
considerable interest in the last two decades.
\emph{Transport} is a key descriptor of the impact of the system's dynamics in the physical world, and \emph{coherent structures} and \emph{metastability} provide information on the multiple timescales of the system, which is crucial for efficient modelling approaches.
Persistent \emph{barriers to transport} play fundamental roles in geophysical systems by organising fluid flow and obstructing transport.
For example, ocean eddy boundaries strongly influence the horizontal distribution of heat in the ocean, and atmospheric vortices can trap chemicals and pollutants.
These time-dependent persistent transport barriers, or Lagrangian coherent structures,
are often difficult to detect and track by measurement (for example, by satellite observations), and many coherent structures present in the flow are very difficult to detect, map, and track to high precision by analytical means.

Coherent structures have been the subject of intense research for over a decade, primarily by {\em geometric} approaches.
The notion that geometric structures such as invariant manifolds
play a key role in dynamical transport and mixing for fluid-like
flow dates back at least two decades. In autonomous
settings, invariant cylinders and tori form impenetrable dynamical
barriers. This follows directly from the uniqueness of trajectories
of the underlying ordinary differential equation. Slow mixing
and transport in periodically driven maps and flows can sometimes
be explained by lobe dynamics of invariant manifolds \cite{romkedar_etal_1990,romkedar_wiggins_90}.
In non-periodic time-dependent settings, finite-time hyperbolic
material lines and surfaces, and more generally
\emph{Lagrangian coherent structures} (LCSs) \cite{Haller_00,Haller_01,shadden_lekien_marsden_05,Haller_11} have been put forward as a geometric approach to identifying barriers to mixing.
LCSs are material (ie.\ following the flow) co-dimension 1 objects that are locally the strongest repelling or attracting objects.
In practice these objects are often numerically estimated by a finite-time Lyapunov exponent (FTLE) field, with additional requirements \cite{Haller_11}. 
Recent work \cite{Haller_12} considers minimal stretching material lines.

{\em Probabilistic} approaches to studying transport, based around the {transfer operator} (or Perron-Frobenius operator) 
have been developed for autonomous systems, including \cite{dellnitz_junge_99,DFJ01,froyland_01,bollt_billings_schwartz_02,padberg_thesis_05}.
These approaches use the transfer operator (a linear operator providing a global description of the action of the flow on densities) to answer questions about global transport properties.
The transfer operator is particularly suited to identifying global metastable or almost-invariant structures in phase space \cite{dellnitz_junge_99,schuette_huisinga_deuflhard_01,FD03,froyland_05}.
Related methods, which attempt to decompose phase space into ergodic components include \cite{budivsic_2012,mezic_wiggins_99}.
Transfer operator-based methods have been very successful in resolving almost-invariant objects in a variety of applied settings, including molecular dynamics (to identify molecular conformations) \cite{schuette_huisinga_deuflhard_01}, ocean dynamics (to map oceanic gyres) \cite{froyland_padberg_england_treguier_07,DFHPS09}, steady or periodically forced fluid flow (to identify almost-cyclic and poorly mixing regions) \cite{singh_etal_09,stremler11}, and granular flow (to identify regions with differing dynamics) \cite{christov_etal_11}.

In the aperiodic time-dependence setting, the papers \cite{SFM10,FSM10} introduced the notion of finite-time coherent sets, which are natural time-dependent analogues of almost-invariant or metastable sets.
A pair of optimally coherent sets (one at an initial time, and one at a final time) have the property that the proportion of mass that is in the set at the initial time \emph{and} is mapped into the set at the final time is \emph{maximal}.
Thus, there is minimal leakage or dispersion from the sets over the finite interval of time considered.
The construction in \cite{FSM10} is a novel matrix-based approach for determining regions in phase space that are maximally coherent over a \emph{finite} period of time.
The main idea is to study the singular values and singular vectors of a discretised form of the transfer operator that describes the flow over the finite duration of interest.
Optimality properties of the singular vectors lead to the optimal coherence properties of the sets extracted from the singular vectors.
Moreover, the construction enabled the tracking of an \emph{arbitrary} probability measure, corresponding to an arbitrarily weighted flux.

Numerical experiments \cite{FSM10} have very successfully analysed an idealised stratospheric flow model, and
European Centre for Medium Range Weather Forecasting (ECMWF) data to delimit the stratospheric Antarctic polar vortex.
The method of \cite{FSM10} has also been successfully used to map and track Agulhas rings in three dimensions \cite{FHRSS12}, yielding eddy structures of greater coherence than standard Eulerian methods based on a snapshot of the underlying time-dependent flow.
These exciting numerical results  motivate further fundamental mathematical questions.
On the mathematical side, the numerical diffusion inherent in the numerical discretisation procedure \cite{FSM10}
 plays a role in the smoothness of the boundaries of the sets identified by the method.
On the practical side, dynamical systems models of geophysical processes are often a combination of deterministic (e.g. advective) and stochastic (e.g. diffusive) components.

The mathematical framework of \cite{FSM10} is purely finitary, which obscures the important role played by diffusion.
The present work extends the matrix-based numerical approach of \cite{FSM10} to an operator-theoretic framework in order to formalise the constructions in a transfer operator setting, and to enable an analytic understanding of the interplay between advection and diffusion.
This interplay has practical significance as mixing in geophysical models is often not resolved explicitly at subgrid scale by the model, but included via a ``parameterisation'' of subgrid diffusion.
Studies of ocean models \cite{england_rahmstorf_1999,knutti_et_al_2000} have shown that the thermohaline circulation and ventilation rates depend sensitively on subgrid parameter.
Similarly, transport barriers may sensitively depend on subgrid diffusion;  indeed the observed sensitivity to thermohaline circulation and ventilation rates may be due to sensitivity of hidden transport barriers.

%
%

In this work, we formalise the framework of \cite{FSM10} by casting the constructions as bounded linear operators;  specifically using Perron-Frobenius operators to effect the advective dynamics and diffusion operators to effect the diffusive dynamics.
This operator-theoretic setting clarifies the coherent set framework and isolates the important role played by diffusion in the computations.
As in \cite{FSM10}, there are no assumptions about stationarity, in fact, our machinery is specifically designed to handle nonautonomous, random, and non-stationary systems.

The finite-time constructions considered here are to be contrasted with recent time-asymptotic transfer operator constructions \cite{FLQ10,FLQ2,GTQ}, where equivariant Oseledets subspaces play the role of singular vectors in the present paper.
Time-asymptotic (or ``infinite-time'') coherent sets are those sets that disperse at the slowest (exponential rate) over an infinite time horizon;  numerical calculations may be found in \cite{FLS10}.
In the present paper we are specifically interested in sets that are most coherent over a particular finite-time horizon.


An outline of the paper is as follows.
Section 2 states the necessary optimality properties of singular vectors of compact linear operators on Hilbert space.
In Section 3 we specialise the linear operators to transfer operators defined by a stochastic kernel and describe the coherent set framework in this setting.
Section 4 considers the situation where the transfer operators are compositions of a Perron-Frobenius operator with a diffusion operator, and demonstrates that the diffusion operator is solely responsible for the subunit singular values.
Section 5 develops upper bounds for the regularity of the singular vectors as a function of the diffusion radius, and develops an upper bound for the spectral gap in terms of the diffusion radius.
Section 6 contains a case study of a nonautonomous idealised stratospheric flow.
We numerically demonstrate the effect of diffusion on the spectrum, the singular vectors, and resulting coherent sets, and then conclude.

\section{Singular Vectors for Compact Operators}
Let $\mathcal{L}:\mathcal{X}\to\mathcal{Y}$ be a compact linear mapping between Hilbert spaces.
We will shortly specialise to the setting where $\mathcal{L}$ is a transfer operator, but for the moment we only make use of compactness and the existence of an inner product.
Denote $\mathcal{A}=\mathcal{L}^*\mathcal{L}:\mathcal{X}\to\mathcal{X}$, where $\mathcal{L}^*$ is the dual of $\mathcal{L}$.
Note that $\mathcal{A}$ is compact, self-adjoint, and positive\footnote{Recall that an operator $\mathcal{A}$ on Hilbert space is called positive if $\langle \mathcal{A}\mathbf{x},\mathbf{x}\rangle\ge 0$ for all $\mathbf{x}$.}, and thus the spectrum of $\mathcal{A}$ is non-negative.
Enumerate the eigenvalues of $\mathcal{A}$, $\lambda_1\ge \lambda_2\ge\cdots\ge 0$
where the number of occurrences equals the multiplicity of the eigenvalue.
By the spectral theorem for compact self-adjoint operators (see eg.\ Theorem II.5.1, \cite{conway}), we can find an orthonormal basis of eigenvectors $u_l\in \mathcal{X}$, enumerated so that $\mathcal{A}u_l=\lambda_l u_l$, so that
\begin{equation}
\mathcal{A}=\sum_{l=1}^{M} \lambda_l\langle\cdot,u_l\rangle_\mathcal{X} u_l,
\end{equation}
where $M$ may be finite or infinite.

One has the following minimax principle (see eg.\ Theorem 9.2.4, p212 \cite{birman_solomjak}):
\begin{theorem}
\label{minimaxthm}
\begin{equation}
\label{minimax+eqn}
\lambda_l=\min_{V:\codim V\le l-1<M} \max_{0\neq \mathbf{x}\in V} \frac{\langle \mathcal{A}\mathbf{x},\mathbf{x}\rangle_\mathcal{X}}{\langle \mathbf{x},\mathbf{x}\rangle_\mathcal{X}}, l=1,\ldots,M.
\end{equation}
Furthermore, the maximising $\mathbf{x}$s are the $u_l$, $l=1,\ldots,M$.
\end{theorem}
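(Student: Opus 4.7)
The plan is to use the spectral decomposition of $\mathcal{A}$ provided just above the theorem, and then prove each of the two inequalities that together give the minimax identity by choosing convenient subspaces and doing a dimension-counting argument. Throughout I will use that every $\mathbf{x}\in\mathcal{X}$ expands as $\mathbf{x}=\sum_l c_l u_l$ with $c_l=\langle\mathbf{x},u_l\rangle_\mathcal{X}$, so that
\begin{equation*}
\frac{\langle \mathcal{A}\mathbf{x},\mathbf{x}\rangle_\mathcal{X}}{\langle \mathbf{x},\mathbf{x}\rangle_\mathcal{X}}=\frac{\sum_l \lambda_l |c_l|^2}{\sum_l |c_l|^2}.
\end{equation*}
This Rayleigh-quotient formula is what ties the Hilbert-space geometry to the ordering $\lambda_1\ge\lambda_2\ge\cdots$.

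First I would establish the $\le$ direction. Choose $V_0 := \overline{\mathrm{span}\{u_l,u_{l+1},\ldots\}}$, which is the orthogonal complement of $\mathrm{span}\{u_1,\ldots,u_{l-1}\}$ and hence has codimension exactly $l-1$. For any nonzero $\mathbf{x}\in V_0$ the coefficients $c_1,\ldots,c_{l-1}$ vanish, so the Rayleigh quotient is a convex combination of $\lambda_l,\lambda_{l+1},\ldots$ and is therefore bounded above by $\lambda_l$, with equality when $\mathbf{x}=u_l$. Taking this particular $V_0$ as a witness in the outer minimum yields $\lambda_l\ge \min_V\max_{\mathbf{x}\in V}R(\mathbf{x})$ and simultaneously exhibits $u_l$ as a maximiser in the inner problem.

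For the $\ge$ direction, let $V$ be any closed subspace with $\codim V\le l-1$, and consider the finite-dimensional subspace $W := \mathrm{span}\{u_1,\ldots,u_l\}$, which has dimension $l$. The orthogonal projection $P_{V^\perp}$ restricted to $W$ is a linear map from an $l$-dimensional space into $V^\perp$, which has dimension at most $l-1$; hence its kernel is nontrivial, and one can pick a nonzero $\mathbf{x}\in W\cap V$. Expanding $\mathbf{x}=\sum_{k=1}^l c_k u_k$, the Rayleigh quotient is a convex combination of $\lambda_1,\ldots,\lambda_l$ and is therefore at least $\lambda_l$. Hence $\max_{\mathbf{x}\in V}R(\mathbf{x})\ge\lambda_l$ for every admissible $V$, giving the reverse inequality.

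The only genuine subtlety, and the step I would handle most carefully, is the dimension count in the infinite-dimensional setting: strictly speaking one needs $\codim V$ to refer to the codimension of the closure of $V$ (or to take $V$ closed), so that $\dim V^\perp\le l-1$ is well-defined and the kernel argument on $P_{V^\perp}|_W$ is valid. Once that convention is fixed, the rest is a clean combination of the spectral expansion and an elementary pigeonhole. The identification of the maximisers as precisely the $u_l$ then follows from tracing when equality holds in the convex-combination bounds: the upper bound forces the active coefficients to be supported on indices whose eigenvalues equal $\lambda_l$, and within the appropriate eigenspace any unit vector achieves the maximum, recovering the final clause of the theorem.
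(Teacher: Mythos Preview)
Your argument is the standard Courant--Fischer proof and is correct; the only point to be careful about is the one you flag yourself, namely that the codimension condition should be read for closed subspaces so that $\dim V^\perp\le l-1$ and the pigeonhole on $P_{V^\perp}|_W$ goes through. Note, however, that the paper does not actually supply a proof of this theorem: it is quoted as a known result with a reference to Birman--Solomjak (Theorem 9.2.4), so there is no ``paper's own proof'' to compare against. Your write-up is a perfectly acceptable self-contained justification of the cited fact.
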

From this, we obtain:
\begin{proposition}
\label{newminimaxprop}
\begin{equation}
\label{newminimax}
\sigma_l:=(\lambda_l)^{1/2}=\min_{V:\codim V\le l-1<M} \max_{0\neq \mathbf{x}\in V,0\neq \mathbf{y}\in \mathcal{Y}} \frac{\langle \mathcal{L}\mathbf{x},\mathbf{y}\rangle_\mathcal{Y}}{\|\mathbf{x}\|_\mathcal{X}\|\mathbf{y}\|_\mathcal{Y}}, l=1,\ldots,M.
\end{equation}
The maximising $\mathbf{x}$ is $u_l$ and the maximising $\mathbf{y}$ is $\mathcal{L}u_l/\|\mathcal{L}u_l\|_\mathcal{Y}$, $l=1,\ldots,M$.
\end{proposition}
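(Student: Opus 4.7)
The plan is to deduce Proposition~\ref{newminimaxprop} directly from Theorem~\ref{minimaxthm} by two standard Hilbert-space manipulations of the Rayleigh quotient. First I would rewrite the numerator of the quotient in Theorem~\ref{minimaxthm} using the adjoint identity
\[
\langle \mathcal{A}\mathbf{x},\mathbf{x}\rangle_\mathcal{X}
=\langle \mathcal{L}^*\mathcal{L}\mathbf{x},\mathbf{x}\rangle_\mathcal{X}
=\langle \mathcal{L}\mathbf{x},\mathcal{L}\mathbf{x}\rangle_\mathcal{Y}
=\|\mathcal{L}\mathbf{x}\|_\mathcal{Y}^2,
\]
so that the quotient equals $\|\mathcal{L}\mathbf{x}\|_\mathcal{Y}^2/\|\mathbf{x}\|_\mathcal{X}^2$. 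Since this quantity is non-negative and $t\mapsto t^{1/2}$ is strictly monotone on $[0,\infty)$, the square root passes through both the outer $\min$ and inner $\max$ in (\ref{minimax+eqn}), yielding the intermediate identity
\[
\sigma_l=\min_{V:\codim V\le l-1<M}\ \max_{0\neq\mathbf{x}\in V}\frac{\|\mathcal{L}\mathbf{x}\|_\mathcal{Y}}{\|\mathbf{x}\|_\mathcal{X}}.
\]

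Next I would convert the norm in the numerator into the desired bilinear form via the Cauchy--Schwarz duality identity: for any nonzero $\mathbf{z}\in\mathcal{Y}$,
\[
\max_{0\neq\mathbf{y}\in\mathcal{Y}}\frac{\langle \mathbf{z},\mathbf{y}\rangle_\mathcal{Y}}{\|\mathbf{y}\|_\mathcal{Y}}=\|\mathbf{z}\|_\mathcal{Y},
\]
with the maximum attained at $\mathbf{y}=\mathbf{z}/\|\mathbf{z}\|_\mathcal{Y}$. Applied to $\mathbf{z}=\mathcal{L}\mathbf{x}$ (and noting that when $\mathcal{L}\mathbf{x}=0$ both sides of the candidate equation vanish, so those $\mathbf{x}$ are harmless), this turns the inner maximisation into a joint maximisation over $\mathbf{x}\in V$ and $\mathbf{y}\in\mathcal{Y}$ of $\langle \mathcal{L}\mathbf{x},\mathbf{y}\rangle_\mathcal{Y}/(\|\mathbf{x}\|_\mathcal{X}\|\mathbf{y}\|_\mathcal{Y})$, which is exactly (\ref{newminimax}).

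For the identification of maximisers I would simply trace the two reformulations back: Theorem~\ref{minimaxthm} supplies the maximising $\mathbf{x}=u_l$ for the eigenvalue Rayleigh quotient, and since $t\mapsto t^{1/2}$ is strictly monotone the same $u_l$ maximises the intermediate quotient above. Feeding $\mathbf{x}=u_l$ into the Cauchy--Schwarz step then pins the optimal $\mathbf{y}$ to $\mathcal{L}u_l/\|\mathcal{L}u_l\|_\mathcal{Y}$, which is well defined whenever $\sigma_l>0$ (if $\sigma_l=0$ the assertion about $\mathbf{y}$ is vacuous since any $\mathbf{y}$ attains the zero maximum).

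There is no real obstacle: the argument is two standard identities (adjoint and Cauchy--Schwarz) applied in sequence, and the only subtle point is verifying that the square root genuinely commutes with $\min\max$ and that the Cauchy--Schwarz optimum is attained. No new spectral, compactness, or regularity machinery beyond Theorem~\ref{minimaxthm} is needed, and the proof should occupy only a few lines in finished form.
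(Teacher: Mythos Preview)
Your proposal is correct and follows essentially the same route as the paper's proof: take the square root of the Rayleigh quotient in Theorem~\ref{minimaxthm}, use the adjoint identity $\langle \mathcal{A}\mathbf{x},\mathbf{x}\rangle_\mathcal{X}=\|\mathcal{L}\mathbf{x}\|_\mathcal{Y}^2$, and then apply the Cauchy--Schwarz duality $\|\mathcal{L}\mathbf{x}\|_\mathcal{Y}=\max_{\mathbf{y}\neq 0}\langle \mathcal{L}\mathbf{x},\mathbf{y}\rangle_\mathcal{Y}/\|\mathbf{y}\|_\mathcal{Y}$ to identify the optimal $\mathbf{y}$. Your additional remarks on the degenerate cases $\mathcal{L}\mathbf{x}=0$ and $\sigma_l=0$ are fine and slightly more careful than the paper's own argument.
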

\begin{proof}
Taking the square root of (\ref{minimax+eqn}) one has
\begin{equation*}
\frac{\langle \mathcal{A}\mathbf{x},\mathbf{x}\rangle_\mathcal{X}^{1/2}}{\langle \mathbf{x},\mathbf{x}\rangle_\mathcal{X}^{1/2}}=\frac{\langle \mathcal{L}\mathbf{x},\mathcal{L}\mathbf{x}\rangle_\mathcal{Y}^{1/2}}{\|\mathbf{x}\|_\mathcal{X}}=\frac{\|\mathcal{L}\mathbf{x}\|_\mathcal{Y}}{\|\mathbf{x}\|_\mathcal{X}}=\max_{0\neq \mathbf{y}\in\mathcal{Y}}\frac{\langle \mathcal{L}\mathbf{x},\mathbf{y}/\|\mathbf{y}\|_\mathcal{Y}\rangle_\mathcal{Y}}{\|\mathbf{x}\|_\mathcal{X}},
\end{equation*}
where the maximising $\mathbf{y}/\|\mathbf{y}\|_\mathcal{Y}$ is $\mathcal{L}\mathbf{x}/\|\mathcal{L}\mathbf{x}\|_\mathcal{Y}$.
\end{proof}
We will call the maximising unit $\mathbf{x}$ and $\mathbf{y}$ in (\ref{newminimax}) the \emph{left and right singular vectors} of $\mathcal{L}$, respectively.
The $\sigma_l$ are the \emph{singular values} of $\mathcal{L}$.

\section{Transfer Operators}
We now begin to be more specific about the objects in the previous section.
We introduce measure spaces $(X,\mathcal{B}_X,\mu)$ and $(Y,\mathcal{B}_Y,\nu)$, where we imagine $X$ as our domain at an initial time, with $\mu$ being a reference measure describing the mass distribution of the object of interest.
We now transform $\mu$ forward by some finite-time dynamics (involving advection and diffusion) to arrive at a probability measure $\nu$ describing the distribution at this later time. The set $Y$ is the support of this transformed measure $\nu$, and may be thought of as the ``reachable set'' for initial points in $X$.
In applications, $\mu$ may describe the distribution of air or water particles, for instance, and $\nu$ the distribution at a later time.

We set $\mathcal{X}=L^2(X,\mu)$ and $\mathcal{Y}=L^2(Y,\nu)$ and denote the standard inner product on $L^2(X,\mu)$ by $\langle\cdot,\cdot\rangle_\mu$ and the inner product on $L^2(Y,\nu)$ by $\langle\cdot,\cdot\rangle_\nu$.
We begin to place some assumptions on our transfer operator $\mathcal{L}:\mathcal{X}\to\mathcal{Y}$ and its dual.

\begin{assumption}
\label{Lasses}
\quad
\begin{enumerate}
\item $(\mathcal{L}f)(y)=\int k(x,y)f(x)\ d\mu(x)$, where $k\in L^2(X\times Y,\mathcal{B}_X\times\mathcal{B}_Y,\mu\times\nu)$ is non-negative,
\item $\mathcal{L}\mathbf{1}_X=\mathbf{1}_Y$, equivalently, $\int k(x,y)\ d\mu(x)=1$ for $\nu$-a.a $y$,
\item $\mathcal{L}^*\mathbf{1}_Y=\mathbf{1}_X$, equivalently, $\int k(x,y)\ d\nu(y)=1$ for $\mu$-a.a $x$,
\end{enumerate}
\end{assumption}


Non-negativity of the stochastic kernel $k$ in Assumption \ref{Lasses} (1) is a consistency requirement that says that if $f$ represents some distribution of mass with respect to $\mu$, then $\mathcal{L}f$ also represents some mass distribution.
Square integrability of $k$ will provide compactness of $\mathcal{L}$;  we discuss this shortly.
Assumption \ref{Lasses} (2) says that the function $\mathbf{1}$ (the density function for the measure $\mu$ in $L^2(X,\mu)$) is mapped to $\mathbf{1}$ (the density function for the measure $\nu$ in $L^2(Y,\nu)$). This is a normalisation condition on $\mathcal{L}$.
Assumption \ref{Lasses} (3) says that $\mathcal{L}$ preserves integrals.  That is, $\int_Y \mathcal{L}f\ d\nu=\int_X f\ d\mu$.  This follows since $\int_Y \mathcal{L}f\ d\nu=\langle \mathcal{L}f,\mathbf{1}\rangle_\nu=\langle f,\mathcal{L}^*\mathbf{1}\rangle_\mu=\int_X f\ d\mu$. 

\begin{lemma}
\label{compactlemma}
Under Assumption \ref{Lasses} (1), $\mathcal{L}:L^2(X,\mu)\to L^2(Y,\nu)$ and $\mathcal{L}^*:L^2(Y,\nu)\to L^2(X,\mu)$ are both compact operators.
\end{lemma}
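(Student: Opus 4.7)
The plan is to invoke the classical Hilbert--Schmidt theorem: an integral operator whose kernel is square-integrable with respect to the product measure is Hilbert--Schmidt, hence compact. The only thing to verify is that the abstract hypothesis applies to the present setting, and then to handle $\mathcal{L}^*$.

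First I would check that $\mathcal{L}$ maps $L^2(X,\mu)$ into $L^2(Y,\nu)$ and is bounded, by applying Cauchy--Schwarz to $|(\mathcal{L}f)(y)|=\left|\int k(x,y)f(x)\,d\mu(x)\right|$ and then Fubini to estimate $\|\mathcal{L}f\|_\nu^2\le \|k\|_{L^2(\mu\times\nu)}^2\|f\|_\mu^2$. This already gives boundedness with operator norm dominated by the Hilbert--Schmidt norm of the kernel.

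For compactness, I would pick an orthonormal basis $\{\phi_i\}_{i\ge 1}$ of $L^2(X,\mu)$ and an orthonormal basis $\{\psi_j\}_{j\ge 1}$ of $L^2(Y,\nu)$ (both separable, as standard measure spaces); then $\{\phi_i(x)\psi_j(y)\}_{i,j}$ is an orthonormal basis of $L^2(X\times Y,\mu\times\nu)$, so we may write
\begin{equation*}
k(x,y)=\sum_{i,j}c_{ij}\phi_i(x)\psi_j(y),\qquad \sum_{i,j}|c_{ij}|^2=\|k\|_{L^2(\mu\times\nu)}^2<\infty.
\end{equation*}
Truncating to $i,j\le N$ defines a finite-rank operator $\mathcal{L}_N f(y)=\int k_N(x,y)f(x)\,d\mu(x)$, and the same Cauchy--Schwarz estimate applied to $k-k_N$ shows
\begin{equation*}
\|\mathcal{L}-\mathcal{L}_N\|\le \|k-k_N\|_{L^2(\mu\times\nu)}\to 0
\end{equation*}
as $N\to\infty$. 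Since $\mathcal{L}$ is a norm limit of finite-rank operators, it is compact.

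For $\mathcal{L}^*$ I would either invoke Schauder's theorem (the adjoint of a compact operator between Hilbert spaces is compact), or observe directly that $\mathcal{L}^*$ is the integral operator from $L^2(Y,\nu)$ to $L^2(X,\mu)$ with kernel $k^*(y,x):=k(x,y)$, which lies in $L^2(Y\times X,\nu\times\mu)$ by Fubini, so the same Hilbert--Schmidt argument applies. I do not anticipate a genuine obstacle here; the only mild point is to confirm that the $L^2$ spaces are separable so that countable bases exist, which is automatic for the standard measure spaces considered in the paper. Everything else is a direct application of Cauchy--Schwarz and Fubini.
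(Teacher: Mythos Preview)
Your proposal is correct and is essentially the same Hilbert--Schmidt argument the paper gives: the paper also bounds $\|\mathcal{L}f\|_\nu$ by $\|k\|\,\|f\|_\mu$, builds tensor-product orthonormal functions $\phi_{ij}(x,y)=\bar{e}_i(x)f_j(y)$, and approximates $\mathcal{L}$ in norm by finite-rank operators (using projections $P_n,Q_n$ rather than your direct kernel truncation, but this is a cosmetic difference). For $\mathcal{L}^*$ the paper simply notes that compactness of $\mathcal{L}$ implies compactness of $\mathcal{L}^*$, which is your Schauder option.
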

\begin{proof}
See appendix.
\end{proof}

Compactness ensures that the spectrum of $\mathcal{L}^*\mathcal{L}=:\mathcal{A}:L^2(X,\mu)\circlearrowleft$ away from the origin is discrete;  in particular, the eigenvalue 1 is isolated and of finite multiplicity.
Further, self-adjointness and positivity of $\mathcal{A}$ implies the spectrum is non-negative and real, so the only eigenvalue of magnitude 1 is the eigenvalue 1 itself.

\begin{assumption}
\label{ass2}
The leading singular value $\sigma_1$ of $\mathcal{L}$ is simple.
\end{assumption}

We will show in Section 5 that the ``small random pertubation of a deterministic process'' kernel studied in Sections 4--6 satisfies Assumption \ref{ass2}.





\begin{proposition}
\label{Lsingvallemma}
Under Assumption \ref{Lasses},
\begin{enumerate}
\item the largest singular value of $\mathcal{L}$ is $\sigma_1=1$ and the corresponding left and right singular vectors are $\mathbf{1}_X$ and $\mathbf{1}_Y$, and
\item Under Assumption \ref{ass2},
\begin{equation}
\label{relaxed}
\sigma_2=\max_{f\in L^2(X,\mu),g\in L^2(Y,\nu)}\left\{\frac{\langle \mathcal{L}f,g\rangle_\nu}{\|f\|_\mu\|g\|_\nu}: \langle f,1\rangle_\mu=\langle g,1\rangle_\nu=0\right\}<1,
\end{equation} where the maximising $f$ and $g$ for are $u_2$ (the second largest singular vector for $\mathcal{L}$) and $\mathcal{L}u_2/\|\mathcal{L}u_2\|$, respectively.
\end{enumerate}
\end{proposition}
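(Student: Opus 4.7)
My plan is to handle the two parts separately, using Assumption \ref{Lasses} to check that the constant functions realize the top singular value, and Proposition \ref{newminimaxprop} together with duality to convert the codimension-$1$ minimax into the stated constrained maximum.

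For part (1), the natural starting point is to compute $\mathcal{A}\mathbf{1}_X = \mathcal{L}^*\mathcal{L}\mathbf{1}_X$. By Assumption \ref{Lasses}(2), $\mathcal{L}\mathbf{1}_X = \mathbf{1}_Y$, and by Assumption \ref{Lasses}(3), $\mathcal{L}^*\mathbf{1}_Y = \mathbf{1}_X$, so $\mathbf{1}_X$ is a unit eigenvector of $\mathcal{A}$ with eigenvalue $1$; hence $\sigma_1 \geq 1$. To see this is the top, I would show that $\mathcal{L}$ is a contraction on $L^2$: for each $y$, the measure $k(x,y)\,d\mu(x)$ is a probability measure on $X$ by Assumption \ref{Lasses}(2), so by Jensen (or Cauchy--Schwarz against $\sqrt{k}$),
\begin{equation*}
|(\mathcal{L}f)(y)|^2 \;\leq\; \int k(x,y)\,|f(x)|^2\,d\mu(x).
\end{equation*}
Integrating against $d\nu(y)$ and applying Fubini plus Assumption \ref{Lasses}(3) gives $\|\mathcal{L}f\|_\nu^2 \leq \|f\|_\mu^2$. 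Thus $\sigma_1 \leq 1$, forcing $\sigma_1 = 1$, and the right singular vector is $\mathcal{L}\mathbf{1}_X/\|\mathcal{L}\mathbf{1}_X\|_\nu = \mathbf{1}_Y$.

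For part (2), I would apply Proposition \ref{newminimaxprop} at $l = 2$. The minimum over codimension-$1$ subspaces $V$ is attained at $V = \{u_1\}^\perp = \{\mathbf{1}_X\}^\perp$, which is exactly the orthogonality constraint $\langle f,\mathbf{1}\rangle_\mu = 0$. This yields
\begin{equation*}
\sigma_2 = \max_{0\neq f\perp \mathbf{1}_X,\ 0\neq g\in L^2(Y,\nu)} \frac{\langle \mathcal{L}f,g\rangle_\nu}{\|f\|_\mu\|g\|_\nu},
\end{equation*}
with $f = u_2$ and $g = \mathcal{L}u_2/\|\mathcal{L}u_2\|_\nu$ optimal. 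To impose the constraint $\langle g,\mathbf{1}\rangle_\nu = 0$ without changing the value, I would observe that for any $f \perp \mathbf{1}_X$,
\begin{equation*}
\langle \mathcal{L}f,\mathbf{1}_Y\rangle_\nu = \langle f,\mathcal{L}^*\mathbf{1}_Y\rangle_\mu = \langle f,\mathbf{1}_X\rangle_\mu = 0,
\end{equation*}
so $\mathcal{L}f \in \{\mathbf{1}_Y\}^\perp$. Hence the maximizing $g \propto \mathcal{L}f$ automatically lies in $\{\mathbf{1}_Y\}^\perp$, and restricting $g$ to this subspace in the maximization leaves $\sigma_2$ unchanged. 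Finally, the strict inequality $\sigma_2 < 1$ is immediate from Assumption \ref{ass2}: simplicity of $\sigma_1 = 1$ together with the isolatedness of $1$ in the spectrum of $\mathcal{A}$ (from the compactness in Lemma \ref{compactlemma}) forces $\sigma_2 < \sigma_1 = 1$.

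The only genuine subtlety is the identification of the constraint set for $g$: everything else is a direct reading of Proposition \ref{newminimaxprop} through the lens of Assumption \ref{Lasses}. That step is short but is the one place a careless proof could miss the orthogonality structure, so I would make the computation $\langle \mathcal{L}f,\mathbf{1}_Y\rangle_\nu = \langle f,\mathbf{1}_X\rangle_\mu$ explicit, since it is both the reason the constrained maximum equals $\sigma_2$ and the reason the right singular vector paired with $u_2$ inherits mean zero with respect to $\nu$.
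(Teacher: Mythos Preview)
Your proposal is correct and follows essentially the same approach as the paper. The only cosmetic difference is in part (1): the paper cites an appendix lemma (Lemma \ref{Lbound}) to conclude $\|\mathcal{L}\|\le 1$ from the two kernel integrability conditions, whereas you prove this bound inline via Jensen/Cauchy--Schwarz against the probability measure $k(x,y)\,d\mu(x)$; the argument for part (2), including the key observation $\langle \mathcal{L}f,\mathbf{1}_Y\rangle_\nu=\langle f,\mathbf{1}_X\rangle_\mu$ that justifies adding the constraint on $g$ for free, is identical.
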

\begin{proof}
\quad
\begin{enumerate}
\item
By Lemma \ref{Lbound} (see Appendix) using Assumption \ref{Lasses} (2) and (3), one has $\|\mathcal{L}\|=\|\mathcal{L}^*\|\le 1$.
But $\mathcal{A}\mathbf{1}_X=\mathbf{1}_X$ so $\|\mathcal{A}\|=1$.
\item
By compactness $\sigma_1=1$ is isolated;  non-negativity of the spectrum of $\mathcal{A}$ and simplicity of $\sigma_1$ trivially implies $\sigma_2<1$.
To show that $\mbox{(\ref{relaxed})}=\sigma_2$, we note that by Assumption \ref{ass2}, when $l=2$ Proposition \ref{newminimaxprop} becomes
\begin{equation}
\label{applicProp1}\sigma^+_2=\min_{V\subset L^2(X,\mu):\codim V\le 1}\max_{f\in V, g\in L^2(Y,\nu)}\frac{\langle \mathcal{L}f,g\rangle_\nu}{\|f\|_\mu\|g\|_\nu}.
\end{equation}
The minimising codimension 1 subspace $V$ is $\sp\{\mathbf{1}\}^\perp$ (orthogonal to the leading singular vector $\mathbf{1}$).
Thus (\ref{applicProp1}) becomes
$$\max_{f\in L^2(X,\mu),g\in L^2(Y,\nu)}\left\{\frac{\langle \mathcal{L}f,g\rangle_\nu}{\|f\|_\mu\|g\|_\nu}:\langle f,\mathbf{1}\rangle_\mu=0 \right\}.$$
Arguing as in the proof of Proposition \ref{newminimaxprop} the maximising $g/\|g\|_\nu$ above is $\mathcal{L}f/\|\mathcal{L}f\|_\nu$ and as $\langle \mathcal{L}f,\mathbf{1} \rangle_\nu=\langle f,\mathbf{1}\rangle_\mu$ for any $f\in L^2(X,\mu)$ one may include the restriction $\langle g,\mathbf{1}\rangle_\nu=0$ without effect.
\end{enumerate}
%
\end{proof}




\subsection{Partitioning $X$ and $Y$}
We wish to measurably partition $X$ and $Y$ as $X_1\cup X_2$ and $Y_1\cup Y_2$ respectively, where
\begin{goals}
\label{goal}
\qquad
\begin{enumerate}
\item $\mathcal{L}\mathbf{1}_{X_k}\approx \mathbf{1}_{Y_k}$, $k=1,2$.
\item $\mu(X_k)=\nu(Y_k), k=1,2$,
\end{enumerate}
\end{goals}
The dynamical reasoning for these goals is that if $X_1$ is a coherent set, then one should be able to find a set $Y_1\subset Y$ that is approximately the image of $X_1$ under the (advective and diffusive) dynamics.
Similarly for $X_2$, the complement of $X_1$ in $X$.
While $Y_1$ is possibly only an approximate image of $X_1$, we will insist that there is no loss of mass under the dynamics (the action of $\mathcal{L}$).
Thus $\mu(X_1)=\nu(Y_1)$ and similarly for $X_2, Y_2$.
We use the shorthand $\{X_1,X_2\}\Bumpeq X$ to mean that $X_1$ and $X_2$ are a measurable partition of $X$.
%

Consider the \textbf{S}et-based problem:
\begin{eqnarray}
\nonumber\mbox{\textbf{(S)}}\ \max_{\{X_1,X_2\}\Bumpeq X,\{Y_1,Y_2\}\Bumpeq Y }&&\left\{\left\langle \mathcal{L}\left(\sqrt{\frac{\mu(X_2)}{\mu(X_1)}}\mathbf{1}_{X_1}-\sqrt{\frac{\mu(X_1)}{\mu(X_2)}}\mathbf{1}_{X_2}\right),\sqrt{\frac{\nu(Y_2)}{\nu(Y_1)}}\mathbf{1}_{Y_1}-\sqrt{\frac{\nu(Y_1)}{\nu(Y_2)}}\mathbf{1}_{Y_2}\right\rangle_\nu: \right. \\ \label{combinatorial}&&\qquad \left.\mu(X_k)=\nu(Y_k), k=1,2.\right\}.
\end{eqnarray}
We first briefly justify the expression \textbf{(S)} in terms of our Goals.
The constraints in (\ref{combinatorial}) directly capture $\mu(X_k)=\nu(Y_k), k=1,2$ from Goal \ref{goal}(2) above.
Noting that $\mu(X_k)=\nu(Y_k), k=1,2$, the objective may be rewritten as:
\begin{eqnarray}
\nonumber&=&\left(\frac{\mu(X_2)}{\mu(X_1)}\int\mathcal{L}\mathbf{1}_{X_1}\cdot\mathbf{1}_{Y_1}\ d\nu+\frac{\mu(X_1)}{\mu(X_2)}\int\mathcal{L}\mathbf{1}_{X_2}\cdot\mathbf{1}_{Y_2}\ d\nu\right)
-\left(\int\mathcal{L}\mathbf{1}_{X_1}\cdot\mathbf{1}_{Y_2}\ d\nu+\int\mathcal{L}\mathbf{1}_{X_2}\cdot\mathbf{1}_{Y_1}\ d\nu\right)\\
\nonumber&=&\left(\frac{\mu(X_2)}{\mu(X_1)}\langle\mathcal{L}\mathbf{1}_{X_1},\mathbf{1}_{Y_1}\rangle_\nu+\frac{\mu(X_1)}{\mu(X_2)}\langle\mathcal{L}\mathbf{1}_{X_2},\mathbf{1}_{Y_2}\rangle_\nu
\right)
-\left(\langle\mathcal{L}\mathbf{1}_{X_1},\mathbf{1}-\mathbf{1}_{Y_1}\rangle_\nu+\langle\mathcal{L}\mathbf{1}_{X_2},\mathbf{1}-\mathbf{1}_{Y_2}\rangle_\nu
\right)\\
\nonumber&=&\left(\frac{\mu(X_2)}{\mu(X_1)}+1\right)\langle\mathcal{L}\mathbf{1}_{X_1},\mathbf{1}_{Y_1}\rangle_\nu+\left(\frac{\mu(X_1)}{\mu(X_2)}+1\right)\langle\mathcal{L}\mathbf{1}_{X_2},\mathbf{1}_{Y_2}\rangle_\nu-\langle\mathbf{1}_{X_1},\mathcal{L}^*\mathbf{1}\rangle_\mu-\langle\mathbf{1}_{X_2},\mathcal{L}^*\mathbf{1}\rangle_\mu \\
\label{form1}&=&\frac{\langle\mathcal{L}\mathbf{1}_{X_1},\mathbf{1}_{Y_1}\rangle_\nu}{\mu(X_1)}+\frac{\langle\mathcal{L}\mathbf{1}_{X_2},\mathbf{1}_{Y_2}\rangle_\nu}{\mu(X_2)}-1.
\end{eqnarray}
Thus, the problem \textbf{(S)} is a natural way to achieve Goals \ref{goal} (1) and (2) above.

The set-based problem (\ref{combinatorial}) is difficult to solve because the functions $f$ and $g$ are restricted to particular forms (differences of two characteristic functions).
Using the shorthand $\psi_{X_1,X_2}=\sqrt{\frac{\mu(X_2)}{\mu(X_1)}}\mathbf{1}_{X_1}-\sqrt{\frac{\mu(X_1)}{\mu(X_2)}}\mathbf{1}_{X_2}$ and $\psi_{Y_1,Y_2}=
\sqrt{\frac{\nu(Y_2)}{\nu(Y_1)}}\mathbf{1}_{Y_1}-\sqrt{\frac{\nu(Y_1)}{\nu(Y_2)}}\mathbf{1}_{Y_2}$, one can easily check that $\|\psi_{X_1,X_2}\|_\mu=\|\psi_{Y_1,Y_2}\|_\nu=1$ and $\langle \psi_{X_1,X_2},\mathbf{1}\rangle_\mu=\langle \psi_{Y_1,Y_2},\mathbf{1}\rangle_\nu=0$.
Thus,
\begin{eqnarray*}
\mbox{\textbf{(S)}}&\le&\max_{\{X_1,X_2\}\Bumpeq X,\{Y_1,Y_2\}\Bumpeq Y }\langle \mathcal{L}\psi_{X_1,X_2},\psi_{Y_1,Y_2}\rangle_\nu \\
&\le& \max_{f\in L^2(X,\mu),g\in L^2(Y,\nu)}\left\{\frac{\langle \mathcal{L}f,g\rangle_\nu}{\|f\|_\mu\|g\|_\nu}: \langle f,1\rangle_\mu=\langle g,1\rangle_\nu=0\right\}=:\mbox{\textbf{(R)}},
\end{eqnarray*}
where we call \textbf{(R)} the \textbf{R}elaxed problem as it is a relaxation of the Set-based problem \textbf{(S)}.
Combining this with Proposition \ref{Lsingvallemma} we have
\begin{theorem}
\label{boundlemma}
Under Assumptions \ref{Lasses} and \ref{ass2},
\begin{equation}
\label{boundeqn}
\max_{\{X_1,X_2\}\Bumpeq X,\{Y_1,Y_2\}\Bumpeq Y }\left\{\frac{\langle\mathcal{L}\mathbf{1}_{X_1},\mathbf{1}_{Y_1}\rangle_\nu}{\mu(X_1)}+\frac{\langle\mathcal{L}\mathbf{1}_{X_2},\mathbf{1}_{Y_2}\rangle_\nu}{\mu(X_2)}:\mu(X_k)=\nu(Y_k), k=1,2\right\}\le 1+\sigma_2.
\end{equation}
\end{theorem}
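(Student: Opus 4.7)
The plan is to recognize that the statement is essentially a repackaging of the chain of inequalities and equalities already assembled in the paragraphs just above it. The left-hand side of \eqref{boundeqn} differs from the objective of problem \textbf{(S)} by a constant, and then the inequalities \textbf{(S)} $\le$ \textbf{(R)} together with Proposition \ref{Lsingvallemma}(2) saying \textbf{(R)} $= \sigma_2$ finish the job. So the proof is really just about assembling these pieces carefully.

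More concretely, the first step I would take is to invoke the computation culminating in \eqref{form1}. That identity says that under the constraint $\mu(X_k)=\nu(Y_k)$, $k=1,2$, the inner product appearing inside the maximum defining \textbf{(S)} is equal to
\begin{equation*}
\frac{\langle \mathcal{L}\mathbf{1}_{X_1},\mathbf{1}_{Y_1}\rangle_\nu}{\mu(X_1)} + \frac{\langle \mathcal{L}\mathbf{1}_{X_2},\mathbf{1}_{Y_2}\rangle_\nu}{\mu(X_2)} - 1.
\end{equation*}
Therefore, taking the max over admissible measurable partitions $\{X_1,X_2\}\Bumpeq X$ and $\{Y_1,Y_2\}\Bumpeq Y$ with $\mu(X_k)=\nu(Y_k)$, the left-hand side of \eqref{boundeqn} equals $\mbox{\textbf{(S)}}+1$.

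Next I would recall the estimate \textbf{(S)} $\le$ \textbf{(R)} that was set up immediately before the theorem, using the substitutions $f=\psi_{X_1,X_2}$ and $g=\psi_{Y_1,Y_2}$. These functions have unit norms in $L^2(X,\mu)$ and $L^2(Y,\nu)$ respectively, and are orthogonal to $\mathbf{1}$ under $\langle\cdot,\cdot\rangle_\mu$ and $\langle\cdot,\cdot\rangle_\nu$, so they are admissible test functions in the unrestricted maximum \textbf{(R)}. Hence \textbf{(S)} $\le$ \textbf{(R)}. Finally, Proposition \ref{Lsingvallemma}(2) identifies \textbf{(R)} with $\sigma_2$, so stringing these together gives
\begin{equation*}
\mbox{LHS of \eqref{boundeqn}} \;=\; \mbox{\textbf{(S)}} + 1 \;\le\; \mbox{\textbf{(R)}} + 1 \;=\; \sigma_2 + 1,
\end{equation*}
which is the desired bound.

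There is no real obstacle here, since the hard analytic work — the minimax characterisation of $\sigma_2$ in Proposition \ref{newminimaxprop}, the identification of the leading singular vectors as $\mathbf{1}_X$ and $\mathbf{1}_Y$ in Proposition \ref{Lsingvallemma}(1), and the algebraic manipulation \eqref{form1} — is already done. The only thing to be careful about is bookkeeping: checking that the normalisation $\|\psi_{X_1,X_2}\|_\mu = 1$ and the mean-zero conditions $\langle \psi_{X_1,X_2},\mathbf{1}\rangle_\mu = 0$ hold (both follow directly from $\mu(X_1)+\mu(X_2)=\mu(X)=1$), so that $\psi_{X_1,X_2}$ and $\psi_{Y_1,Y_2}$ genuinely lie in the constraint set of \textbf{(R)}. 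Once that is verified the theorem follows immediately from the chain displayed above.
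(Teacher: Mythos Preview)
Your proposal is correct and follows essentially the same approach as the paper: the theorem is stated immediately after the chain \textbf{(S)} $\le$ \textbf{(R)}, with the paper simply writing ``Combining this with Proposition \ref{Lsingvallemma} we have'' before the theorem statement. Your write-up just makes explicit the bookkeeping step that the left-hand side of \eqref{boundeqn} equals $\mbox{\textbf{(S)}}+1$ via \eqref{form1}, which the paper leaves implicit.
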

This result is related to the upper bound in Theorem 2 \cite{huisinga_schmidt}.
The main difference is that \cite{huisinga_schmidt} is concerned with the dynamics of a reversible Markov operator at equilibrium and seeks fixed metastable sets, while here we have no assumption on reversibility of the dynamics, no restriction on the choice of $\mu$ ($\mu$ need not, for example, be invariant under the dynamics), and seek pairs of coherent sets $X_k,Y_k$, $k=1,2$, where the $X_k$ are not fixed, but map approximately onto $Y_k$ (which need not even intersect $X_k$).

Throughout this work, we have, for simplicity considered partitions of $X$ and $Y$ into two sets each.
Our ideas can be extended to multiple partitions using multiple singular vector pairs, as has been done in the autonomous setting with eigenvectors of transfer operators to find multiple almost-invariant sets \cite{FD03,gaveau_schulman_06,deuflhard_weber_05}.
As one progresses down the spectrum of sub-unit singular values, the corresponding singular vector pairs provide independent information on coherent separations of the phase space, which can in principle be combined using techniques drawn from the autonomous setting.


\begin{remark}
\label{remarkN}
In practice, solving \textbf{(R)} is straightforward once a suitable numerical approximation of $\mathcal{L}$ has been constructed.
Following \cite{FSM10} we will take the optimal $f$ and $g$ from (\ref{relaxed}) and use them heuristically to create partitions $\{X_1, X_2\}$ and $\{Y_1, Y_2\}$ via $X_1=\{f>b\}, X_2=\{f<b\}, Y_1=\{g>c\}, Y_2=\{g<c\}$, where $b$ and $c$ are chosen so that $\mu(X_k)=\nu(Y_k), k=1,2$.
This can be achieved, for example, by a line search on the value of $b$, where for each choice of $b$, there is a corresponding choice of $c$ that will match $\mu(X_k)=\nu(Y_k), k=1,2$;  we refer the reader to \cite{FLS10,FSM10} for implementation details.
\end{remark}

\section{Perron-Frobenius Operators on Smooth Manifolds}

We now consider the situation where the operator $\mathcal{L}$ arises from a Perron-Frobenius operator of a deterministic dynamical system.
Our dynamical system $T:M\to M$ acts on a compact subset\footnote{One could generalise the constructions in the sequel to $M$ a smooth Riemannian manifold in the obvious way.} $M\subset \mathbb{R}^d$.
The action of $T$ might be derived from either a discrete time or continuous time dynamical system.  In the latter case, $T$ will be a time-$t$ flow of some differential equation.  There are no assumptions about stationarity, in fact, our machinery is specifically designed to handle nonautonomous, random, and non-stationary systems.  We seek to analyse the one-step, finite-time action of $T$.

Our domain of interest will be $X\subset M$.
On $X$ we have a reference probability measure $\mu$ describing the quantity we are interested in tracking the transport of.
We assume that $\mu$ has an $L^2(X,\ell)$ density $h_\mu$ with respect to Lebesgue measure $\ell$.
We will discuss two situations:  purely deterministic dynamics, and deterministic dynamics preceded and followed by a small random perturbation.
In the former case we denote the set $Y$ by $Y_0=T(X)$ and the measure $\nu$ by $\nu_0$ (with density $h_{\nu_0}=d\nu_0/d\ell$), while in the latter case $Y$ is denoted $Y_\epsilon\supset T(X)$ and $\nu$ is denoted $\nu_\epsilon$ (with density $h_{\nu_\epsilon}=d\nu_\epsilon/d\ell$); this terminology emphasises whether or not a perturbation is present.
We define these objects formally in the coming subsections.
The same subscript notation will also shortly be extended to $\mathcal{L}$ and $\mathcal{A}$ with the obvious meanings.


Two notable aspects of our approach are (i) $\mu$ may be supported on a small subset $X$ of $M$, enabling one to neglect the remainder of the phase space $M$ for computations, and (ii) the possibility that $X\cap Y=\emptyset$;  thus $X$ need not be close to invariant, but in fact, all points may leave $X$ under the action of $T$.

\subsection{The deterministic setting}
Let $\ell$ denote Lebesgue measure on $M$ and suppose that $T:X\to Y_0$ is non-singular wrt Lebesgue measure (ie.\ $\ell(A)=0\Rightarrow \ell(T^{-1}A)=0$) for all measurable $A\subset Y_0$).
The evolution of a density $f\in L^1(X,\ell)$ under $T$ is described by the \emph{Perron-Frobenius operator} $\mathcal{P}:L^1(X,\ell)\to L^1(Y_0,\ell)$ defined by $\int_A \mathcal{P}f\ d\ell=\int_{T^{-1}A} f\ d\ell$ for all measurable $A\subset Y_0$.
If $T$ is differentiable, one may write $\mathcal{P}f(y)=\sum_{x\in T^{-1}y}\frac{f(x)}{|\det DT(x)|}$, however, in the remainder of this section we do not require differentiability of $T$.

If we define $\mathcal{L}_0:L^1(X,\mu)\to L^1(Y_0,\nu_0)$ by $\mathcal{L}_0f=\mathcal{P}(f\cdot h_\mu)/h_{\nu_0}$, where $h_{\nu_0}=\mathcal{P}h_\mu$, then $\mathcal{L}_0^*:L^\infty(Y_0,\nu_0)\to L^\infty(X,\mu)$ is given by $\mathcal{L}_0^*g=g\circ T$ since,
$$\langle \mathcal{L}_0f,g\rangle_{\nu_0}=\int \frac{\mathcal{P}(f\cdot h_\mu)}{h_{\nu_0}}\cdot g\ d{\nu_0}=\int \mathcal{P}(f\cdot h_\mu)\cdot g\ d\ell=\int f\cdot h_\mu\cdot g\circ T\ d\ell=\int f\cdot g\circ T\ d\mu=\langle f,\mathcal{L}_0^*g\rangle_\mu,$$
using standard $L^1/L^\infty$ duality of $\mathcal{P}$ (see eg. p48 \cite{lasota_mackey2}).

We now briefly argue that it is not instructive to use $\mathcal{L}=\mathcal{L}_0$ created directly from $\mathcal{P}$.
Firstly, if $T$ is invertible (eg.\ arising as a time-$t$ map of a flow), then a simple computation shows that $\mathcal{A}_0=\mathcal{L}_0^*\mathcal{L}_0$ is the identity operator and so there are no ``second largest'' eigenvalues of $\mathcal{A}_0$, only the eigenvalue 1.
Secondly, without the invertibility assumption, one can informally (due to non-compactness of $\mathcal{L}_0$) connect this dynamically with Theorem \ref{boundlemma}.
We note that the LHS of (\ref{boundeqn}) (substituting $\mathcal{L}=\mathcal{L}_0$) can be equivalently written as
\begin{eqnarray*}
\label{upperboundeqn2}
&&\max\left\{\frac{\langle\mathbf{1}_{X_1},\mathcal{L}_0^*\mathbf{1}_{Y_1}\rangle_\mu}{\mu(X_1)}+\frac{\langle\mathbf{1}_{X_2},\mathcal{L}_0^*\mathbf{1}_{Y_2}\rangle_\mu}{\mu(X_2)}: \{X_1,X_2\}\Bumpeq X, \{Y_1,Y_2\}\Bumpeq Y_0\right\}\\
&=&\max\left\{\frac{\langle\mathbf{1}_{X_1},\mathbf{1}_{T^{-1}Y_1}\rangle_\mu}{\mu(X_1)}+\frac{\langle\mathbf{1}_{X_2},\mathbf{1}_{T^{-1}Y_2}\rangle_\mu}{\mu(X_2)}: \{X_1,X_2\}\Bumpeq X, \{Y_1,Y_2\}\Bumpeq Y_0\right\}\\
&=&\max\left\{\frac{\mu(X_1\cap T^{-1}Y_1)}{\mu(X_1)}+\frac{\mu(X_2\cap T^{-1}Y_2)}{\mu(X_2)}: \{X_1,X_2\}\Bumpeq X, \{Y_1,Y_2\}\Bumpeq Y_0\right\}
\end{eqnarray*}
By choosing $\{Y_1,Y_2\}$ to be any nontrivial measurable partition and $X_k=T^{-1}(Y_k), k=1,2$, the value of the above expression becomes 2, forcing $\sigma_2=1$.
Thus, we see that defining $\mathcal{L}=\mathcal{L}_0$ using the deterministic Perron-Frobenius operator $\mathcal{P}$ does not allow us to find a ``distinguished'' coherent partition;  all partitions of this form are equally coherent.
In practice, one is typically interested in coherent sets that are robust in the presence of noise of a certain amplitude, or in the presence of diffusion inherent in a dynamical or physical model.

\subsection{Small random perturbations}
\label{diffsection}
To create ``distinguished'' coherent sets in purely advective dynamics, indicated by isolated singular values close to 1, we construct $\mathcal{L}=\mathcal{L}_\epsilon$ from the Perron-Frobenius operator and diffusion operators.
We will apply diffusion before and after the application of the Perron-Frobenius operator.
For $X\subset X_\epsilon\subset M$, we define a local diffusion operator $\Dx:L^1(X,\ell)\to L^1(X_\epsilon,\ell)$ via a bounded stochastic kernel: $\Dx g(y)=\int_X \ax(y-x)g(x)\ dx$, where $\ax:M\to \mathbb{R}^+$ satisfies $\int_{X_\epsilon} \ax(y-x)\ dy=1$ for $x\in X$.
Similarly, for $Y'_\epsilon\subset Y_\epsilon\subset M$ we define a local diffusion operator $\Dy:L^1(Y'_\epsilon,\ell)\to L^1(Y_\epsilon,\ell)$ by $\Dy g(y)=\int_{Y'_\epsilon} \ay(y-x)g(x)\ dx$, where $\ay$ is bounded and satisfies $\int_{Y_\epsilon} \ay(y-x)\ dy=1$ for $x\in Y'_\epsilon$.
We have in mind that $\ax$ and $\ay$ are supported in an $\epsilon$-neighbourhood of the origin, and that $X_\epsilon=\supp(\Dx\mathbf{1}_X)$, $Y'_\epsilon=T(X_\epsilon)$, and $Y_\epsilon=\supp(\Dx \mathbf{1}_{Y'_\epsilon})$.
Thus, we have
\begin{equation}
\label{floweqn}
L^1(X,\ell)\mapright{\Dx}L^1(X_\epsilon,\ell)\mapright{\mathcal{P}}L^1(Y'_\epsilon,\ell)\mapright{\Dy}L^1(Y_\epsilon,\ell)
\end{equation}


%
%

We now define $\mathcal{P}_\epsilon:L^1(X,\ell)\to L^1(Y_\epsilon,\ell)$ by $\mathcal{P}_\epsilon f(y):=\Dy\mathcal{P}\Dx f(y)$ and note that
\begin{eqnarray*}
\Dx\mathcal{P}\Dy f(y)&=&\int_{Y'_\epsilon} \ay(y-x)(\mathcal{P}\Dx f)(x)\ dx \\
&=&\int_{Y'_\epsilon} \ay(y-x)\mathcal{P}\left(\int_X \ax(x-z)f(z)\ dz\right)\ dx \\
&=&\int_{X_\epsilon} \ay(y-Tx)\left(\int_X \ax(x-z)f(z)\ dz\right)\ dx \qquad\mbox{by duality of $\mathcal{P}$} \\
&=&\int_X \left(\int_{X_\epsilon} \ay(y-Tx)\ax(x-z)\ dx\right)f(z)\ dz
\end{eqnarray*}
We note that to define $\mathcal{P}_\epsilon$ via the final displayed expression above, $T$ need only be measurable;  in particular, $T$ need not be nonsingular (nor invertible).

Finally, we define an operator $\mathcal{L}_\epsilon$ (which, under suitable conditions on $\ax$ and $\ay$ discussed shortly, will act on functions in $L^2(X,\mu)$) by
\begin{equation}
\label{specLeqn}\mathcal{L}_\epsilon f(y)=\frac{\mathcal{P}_\epsilon(f\cdot h_\mu)}{\mathcal{P}_\epsilon h_\mu}=\frac{\int_X \left(\int_{X_\epsilon} \ay(y-Tz)\ax(z-x)\ dz\right)f(x)h_\mu(x)\ dx}{\int_X \left(\int_{X_\epsilon} \ay(y-Tz)\ax(z-x)\ dz\right)h_\mu(x)\ dx}=\int_X k_\epsilon(x,y)f(x)\ d\mu(x),
\end{equation}
where $k_\epsilon(x,y)=\left(\int_{X_\epsilon} \ay(y-Tz)\ax(z-x)\ dz\right)/\int_X\left(\int_{X_\epsilon} \ay(y-Tz)\ax(z-x)\ dz\right)\ d\mu(x)$.
We denote the normalising term in the denominator $\int_X\left(\int_{X_\epsilon} \ay(y-Tz)\ax(z-x)\ dz\right)\ d\mu(x)=\mathcal{P}_\epsilon h_\mu(y)$ as $h_{\nu_\epsilon}(y)$, and define $\nu_\epsilon=dh_{\nu_\epsilon}/d\ell$.
By Lemma \ref{compactlemma}, if $k_\epsilon(x,y)\in L^2(X\times Y_\epsilon,\mu\times\nu_\epsilon)$ then $\mathcal{L}_\epsilon:L^2(X,\mu)\to L^2(Y_\epsilon,\nu_\epsilon)$ and is compact.

We note that the dual operators $\Dx^*:L^\infty(X_\epsilon,\ell)\to L^\infty(X,\ell)$ and $\Dy^*:L^\infty(Y_\epsilon,\ell)\to L^\infty(Y'_\epsilon,\ell)$ are given by $\Dx^* f(x)=\int_{X_\epsilon} \ax(x-y)f(y)\ dy$ and $\Dy^* g(x)=\int_{Y_\epsilon} \ay(x-y)g(y)\ dy$, respectively (in fact, since $\ax$ and $\ay$ are bounded, these dual operators may be considered to act on $L^1$ functions).
It is straightforward to show that the dual operator $\mathcal{L}_\epsilon^*:L^2(Y_\epsilon,\nu_\epsilon)\to L^2(X,\mu)$ is given by $\mathcal{L}_\epsilon^*g(x)=\int_{Y_\epsilon} k_\epsilon(x,y)g(y)\ d\nu_\epsilon(y)$, and substituting $\ax, \ay$ as above we have
\begin{eqnarray}
\mathcal{L}^*_\epsilon g(x)&=&\int_{Y_\epsilon} \frac{\left(\int_{X_\epsilon} \ay(y-Tz)\ax(z-x)\ dz\right)g(y)\ d\nu_\epsilon(y)}{h_{\nu_\epsilon}(y)}\\
&=&\int_{Y_\epsilon} \left(\int_{X_\epsilon} \ay(y-Tz)\ax(z-x)\ dz\right)g(y)\ dy\\
\label{specdualLeqn}&=&\int_{X_\epsilon} \ax(z-x)\left(\int_{Y_\epsilon} \ay(y-Tz)\ g(y)\ dy\right)\ dz\\
&=&\int_{X_\epsilon} \ax(z-x)\mathcal{K}(\Dy^* g)(z)\ dz\\
&=&\Dx^*\circ\mathcal{K}\circ\Dy^* g(x)
\end{eqnarray}
where $\mathcal{K}g=g\circ T$ is the Koopman operator.

We now verify that $\mathcal{L}_\epsilon$ satisfies Assumption \ref{Lasses}.
For Assumption \ref{Lasses}(2), we set $f=\mathbf{1}_X$ in (\ref{specLeqn}) and
for Assumption \ref{Lasses}(3), we set $g=\mathbf{1}_{Y_\epsilon}$ in (\ref{specdualLeqn}).
Whether $\mathcal{L}_\epsilon$ is compact or not (Assumption \ref{Lasses}(1)) depends on the form of $\ax$ and $\ay$;  we show that $\mathcal{L}_\epsilon$ is compact for a natural choice of $\ax, \ay$, namely $\ax=\ay=\mathbf{1}_{B_\epsilon(0)}/\ell(B_\epsilon(0))$\footnote{In the case where $M$ has boundaries so that $B_\epsilon(Tx)\nsubseteq M$ for some $x\in X$, we assume that the kernel $\alpha_\epsilon$ is suitably modified at these boundaries to remain stochastic.}.
This choice of $\ax, \ay$ means that the action of $\mathcal{D}_\epsilon$ is an averaging over a local $\epsilon$-ball, and the action of $\mathcal{P}_\epsilon$ may be interpreted as a small random perturbation with uniform density on an $\epsilon$-ball, followed by iteration by $T$, followed by another small random perturbation with uniform density on an $\epsilon$-ball.
Explicitly, for this choice of $\alpha_\epsilon$ one has
\begin{eqnarray}
\nonumber\mathcal{L}_\epsilon f(y)&=&\frac{\int_X\left(\int_{X_\epsilon}\mathbf{1}_{B_\epsilon(0)}(y-Tz) \mathbf{1}_{B_\epsilon(0)}(z-x)\ dz\right)f(x)\ d\mu(x)}{\int_X\left(\int_{X_\epsilon}\mathbf{1}_{B_\epsilon(0)}(y-Tz) \mathbf{1}_{B_\epsilon(0)}(z-x)\ dz\right)\ d\mu(x)}\\
\nonumber&=&\frac{\int_X\left(\int_{X_\epsilon}\mathbf{1}_{B_\epsilon(y)}(Tz) \mathbf{1}_{B_\epsilon(x)}(z)\ dz\right)f(x)\ d\mu(x)}{\int_X\left(\int_{X_\epsilon}\mathbf{1}_{B_\epsilon(y)}(Tz) \mathbf{1}_{B_\epsilon(x)}(z)\ dz\right)\ d\mu(x)}\\
\nonumber&=&\frac{\int_X\left(\int_{X_\epsilon}\mathbf{1}_{T^{-1}B_\epsilon(y)}(z) \mathbf{1}_{B_\epsilon(x)}(z)\ dz\right)f(x)\ d\mu(x)}{\int_X\left(\int_{X_\epsilon}\mathbf{1}_{T^{-1}B_\epsilon(y)}(z) \mathbf{1}_{B_\epsilon(x)}(z)\ dz\right)\ d\mu(x)}\\
\label{withinvball}&=&\frac{\int_X\ell(B_\epsilon(x)\cap T^{-1}B_\epsilon(y))f(x)\ d\mu(x)}{\int_X\ell(B_\epsilon(x)\cap T^{-1}B_\epsilon(y))\ d\mu(x)},\qquad\mbox{since $B_\epsilon(x)\subset X_\epsilon$.}
\end{eqnarray}

Thus, $\mathcal{L}_\epsilon f$ at $y\in Y_\epsilon$ is a double-average over the pull-back by $T$ of an $\epsilon$-neighbourhood of $y$.
Clearly, $\mathcal{L}_\epsilon \mathbf{1}_X=\mathbf{1}_{Y_\epsilon}$.
Also, by (\ref{specdualLeqn}):
\begin{eqnarray}
\nonumber\mathcal{L}_\epsilon^*g(x)&=&\frac{1}{\ell(B_\epsilon(0))^2}\int_{Y_\epsilon}\int_{X_\epsilon} \mathbf{1}_{B_\epsilon(0)}(y-Tz)\mathbf{1}_{B_\epsilon(0)}(z-x)\ dz\  g(y)\ dy\\
\nonumber&=&\frac{1}{\ell(B_\epsilon(0))}\int_{X_\epsilon}\mathbf{1}_{B_\epsilon(x)}(z)\left(\frac{1}{\ell(B_\epsilon(0))}\int_{Y_\epsilon} \mathbf{1}_{B_\epsilon(Tz)}(y)g(y)\  dy\right)\ dz \\
\label{withinvballdual}&=&\frac{1}{\ell(B_\epsilon(0))}\int_{X_\epsilon}\mathbf{1}_{B_\epsilon(x)}(z)\left(\frac{1}{\ell(B_\epsilon(0))}\int_{B_\epsilon(Tz)}g(y)\  dy\right)\ dz
\end{eqnarray}
noting that $B_\epsilon(Tz)\subset Y_\epsilon$ for $z\in X_\epsilon$ and $B_\epsilon(x)\subset X_\epsilon$ for $x\in X$.
Using these facts, one may compute that $\mathcal{L}^*_\epsilon\mathbf{1}_{Y_\epsilon}(x)=\mathbf{1}_X$.
\begin{lemma}
\label{compactlemma2}
If $\ell({Y_\epsilon})<\infty$ and $\ax(x)=\ay(x)=\mathbf{1}_{B_\epsilon(0)}(x)/\ell(B_\epsilon(0))$ then $k_\epsilon\in L^2(X\times {Y_\epsilon},\mu\times{\nu_\epsilon})$ and thus $\mathcal{L}_\epsilon$ is compact.
\end{lemma}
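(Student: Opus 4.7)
The plan is to reduce everything to Lemma \ref{compactlemma}: once we verify that $k_\epsilon \in L^2(X\times Y_\epsilon, \mu\times\nu_\epsilon)$, compactness of $\mathcal{L}_\epsilon$ follows immediately. So the real task is to bound the $L^2$ norm of $k_\epsilon$ explicitly, which I will do using just the pointwise bound $\ell(B_\epsilon(x)\cap T^{-1}B_\epsilon(y))\le \ell(B_\epsilon(0))$ together with the normalisation identities that $k_\epsilon$ and $h_{\nu_\epsilon}$ already satisfy.

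First I would rewrite $k_\epsilon$ using the specific form of $\alpha_{X,\epsilon}$ and $\alpha_{Y,\epsilon}$: from the computation leading to (\ref{withinvball}) one reads off
\[
k_\epsilon(x,y)=\frac{\ell(B_\epsilon(x)\cap T^{-1}B_\epsilon(y))}{\ell(B_\epsilon(0))^2\, h_{\nu_\epsilon}(y)}.
\]
The trivial inequality $\ell(B_\epsilon(x)\cap T^{-1}B_\epsilon(y))\le \ell(B_\epsilon(0))$ then yields the pointwise bound $k_\epsilon(x,y)\le 1/(\ell(B_\epsilon(0))\, h_{\nu_\epsilon}(y))$ for $\nu_\epsilon$-a.e.\ $y$. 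Squaring and using this bound on one of the factors gives
\[
k_\epsilon(x,y)^2 \le \frac{1}{\ell(B_\epsilon(0))\, h_{\nu_\epsilon}(y)}\, k_\epsilon(x,y).
\]

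Next I integrate in $x$ against $\mu$ and invoke Assumption \ref{Lasses}(2), i.e.\ $\int_X k_\epsilon(x,y)\, d\mu(x)=1$ for $\nu_\epsilon$-a.a.\ $y$ (already verified in the paragraph preceding the lemma). This produces
\[
\int_X k_\epsilon(x,y)^2\, d\mu(x)\le \frac{1}{\ell(B_\epsilon(0))\, h_{\nu_\epsilon}(y)}.
\]
Then I integrate in $y$ against $d\nu_\epsilon = h_{\nu_\epsilon}\, d\ell$, where $h_{\nu_\epsilon}$ cancels cleanly:
\[
\int_{Y_\epsilon}\!\int_X k_\epsilon(x,y)^2\, d\mu(x)\, d\nu_\epsilon(y) \le \frac{1}{\ell(B_\epsilon(0))}\int_{Y_\epsilon} d\ell(y) = \frac{\ell(Y_\epsilon)}{\ell(B_\epsilon(0))},
\]
which is finite by the standing assumption $\ell(Y_\epsilon)<\infty$. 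Hence $k_\epsilon\in L^2(X\times Y_\epsilon,\mu\times\nu_\epsilon)$, and applying Lemma \ref{compactlemma} finishes the proof.

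I do not anticipate any genuine obstacle: the argument hinges on recognising that $h_{\nu_\epsilon}$ is precisely the denominator needed to cancel the singular behaviour of $k_\epsilon$ where the transformed density is small. The only modest subtlety is choosing to split $k_\epsilon^2$ asymmetrically — one factor handled by the sup bound in $y$, the other by $\int k_\epsilon\, d\mu = 1$ — rather than attempting a direct brute-force bound; this is what makes the $h_{\nu_\epsilon}$ factors telescope against $d\nu_\epsilon$ and produces the clean final estimate $\ell(Y_\epsilon)/\ell(B_\epsilon(0))$.
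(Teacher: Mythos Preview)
Your proof is correct and is essentially the same argument as the paper's: both use the pointwise bound $\ell(B_\epsilon(x)\cap T^{-1}B_\epsilon(y))\le \ell(B_\epsilon(0))$ on one factor, cancel the remaining $h_{\nu_\epsilon}$ against $d\nu_\epsilon$, and arrive at the identical estimate $\|k_\epsilon\|^2\le \ell(Y_\epsilon)/\ell(B_\epsilon(0))$. The only difference is presentational---you package the steps as $k_\epsilon^2\le (\sup_x k_\epsilon)\cdot k_\epsilon$ followed by $\int k_\epsilon\,d\mu=1$, while the paper writes out the full double integral and simplifies directly---but the underlying computation is the same.
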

\begin{proof}
We show that $k_\epsilon(x,y)\in L^2(X\times {Y_\epsilon},\mu\times{\nu_\epsilon})$;  the result will then follow from Lemma \ref{compactlemma}.
We use the fact that $\int_{X_\epsilon} \ay(y-Tz)\ax(z-x)\ dz=\ell(B_\epsilon(x)\cap T^{-1}B_\epsilon(y))/\ell(B_\epsilon)^2$ as shown in (\ref{withinvball}).
\begin{eqnarray*}
\|k\|^2&=&\int_{Y_\epsilon}\int_X \frac{\left(\int_{X_\epsilon} \ay(y-Tz)\ax(z-x)\ dz\right)^2}{\left(\int_X\int_{X_\epsilon} \ay(y-Tz)\ax(z-x)\ dz\ d\mu(x)\right)^2}\ d\mu(x)d{\nu_\epsilon}(y)\\
&=&\int_{Y_\epsilon}\int_X \frac{\left(\int_{X_\epsilon} \ay(y-Tz)\ax(z-x)\ dz\right)^2}{\left(\int_X\int_{X_\epsilon} \ay(y-Tz)\ax(z-x)\ dz\ d\mu(x)\right)}\ d\mu(x)dy\\
&=&\frac{1}{\ell(B_\epsilon(0))^2}\int_{Y_\epsilon}\int_X \frac{\ell(T^{-1}B_\epsilon(y)\cap B_\epsilon(x))^2}{\int_X \ell(T^{-1}B_\epsilon(y)\cap B_\epsilon(x))\ d\mu(x)}\ d\mu(x)dy\\
&=&\frac{1}{\ell(B_\epsilon(0))^2}\int_{Y_\epsilon} \frac{\int_X\ell(T^{-1}B_\epsilon(y)\cap B_\epsilon(x))^2\ d\mu(x)}{\int_X \ell(T^{-1}B_\epsilon(y)\cap B_\epsilon(x))\ d\mu(x)}\ dy\\
&\le&\frac{1}{\ell(B_\epsilon(0))}\int_{Y_\epsilon} \frac{\int_X\ell(T^{-1}B_\epsilon(y)\cap B_\epsilon(x))\ d\mu(x)}{\int_X \ell(T^{-1}B_\epsilon(y)\cap B_\epsilon(x))\ d\mu(x)}\ dy\\
&\le&\frac{\ell(Y_\epsilon)}{\ell(B_\epsilon(0))}<\infty
%
\end{eqnarray*}
By Lemma \ref{compactlemma} we have that $\mathcal{L}_\epsilon$ and $\mathcal{L}_\epsilon^*$ are both compact.
\end{proof}

\begin{corollary}
If $\ell(Y_\epsilon)<\infty$ and $\ax(x)=\ay(x)=\mathbf{1}_{B_\epsilon(0)}(x)/\ell(B_\epsilon(0))$, the operator $\Le$ satisfies Assumption \ref{Lasses}.
\end{corollary}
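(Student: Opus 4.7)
The proof is essentially a matter of collecting facts already derived in this subsection, so the plan is to go through the three parts of Assumption \ref{Lasses} in order and point to where each has been established. There is no new machinery to invent; the main task is being explicit about why each piece follows from what has been done.

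For part (1), I would note that equation (\ref{specLeqn}) already writes $\mathcal{L}_\epsilon f(y)=\int_X k_\epsilon(x,y)f(x)\,d\mu(x)$ with $k_\epsilon$ the ratio $\int_{X_\epsilon}\alpha_{Y,\epsilon}(y-Tz)\alpha_{X,\epsilon}(z-x)\,dz\,/\,h_{\nu_\epsilon}(y)$, which is a ratio of non-negative quantities (both $\alpha_{X,\epsilon}$ and $\alpha_{Y,\epsilon}$ are non-negative by construction). The only substantive point is square-integrability of $k_\epsilon$ with respect to $\mu\times\nu_\epsilon$, and this is precisely the content of Lemma \ref{compactlemma2} under the hypothesis $\ell(Y_\epsilon)<\infty$. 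The denominator $h_{\nu_\epsilon}(y)=\mathcal{P}_\epsilon h_\mu(y)$ must be positive $\nu_\epsilon$-a.e.\ on $Y_\epsilon$ for the ratio to be well-defined; this is automatic because $Y_\epsilon$ was defined as $\supp(\mathcal{D}_{Y'_\epsilon,\epsilon}\mathbf{1}_{Y'_\epsilon})$, which, combined with the hypothesis that $h_\mu$ is a probability density on $X$, ensures $h_{\nu_\epsilon}>0$ on $Y_\epsilon$.

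For part (2), the assertion $\mathcal{L}_\epsilon\mathbf{1}_X=\mathbf{1}_{Y_\epsilon}$ is immediate from (\ref{specLeqn}) on setting $f\equiv 1$: the numerator and denominator coincide, giving the value $1$ at every $y\in Y_\epsilon$. This was already observed in the text following (\ref{withinvball}).

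For part (3), I would use the dual formula (\ref{specdualLeqn}), specifically its rewriting (\ref{withinvballdual}), and set $g=\mathbf{1}_{Y_\epsilon}$. The inner integral becomes $\ell(B_\epsilon(Tz))/\ell(B_\epsilon(0))=1$ for every $z\in X_\epsilon$ (using the convention in the footnote that the kernel is modified near $\partial M$ to remain stochastic, so $B_\epsilon(Tz)$ is effectively a full $\epsilon$-ball for the purposes of mass). The outer integral then reduces to $(1/\ell(B_\epsilon(0)))\int_{X_\epsilon}\mathbf{1}_{B_\epsilon(x)}(z)\,dz=1$ for $x\in X$, since $B_\epsilon(x)\subset X_\epsilon$. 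Hence $\mathcal{L}_\epsilon^*\mathbf{1}_{Y_\epsilon}=\mathbf{1}_X$. I expect no real obstacle here — the only place a reader might stumble is in confirming that the stochasticity of $\alpha_{X,\epsilon}$ and $\alpha_{Y,\epsilon}$ is preserved at boundaries of $M$, but the footnote to the definition of $\alpha_\epsilon$ addresses exactly this, so I would simply invoke it.
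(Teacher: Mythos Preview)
Your proposal is correct and follows essentially the same approach as the paper: the paper treats this corollary as an immediate consequence of the preceding discussion, having verified parts (2) and (3) of Assumption~\ref{Lasses} by substituting $f=\mathbf{1}_X$ in (\ref{specLeqn}) and $g=\mathbf{1}_{Y_\epsilon}$ in (\ref{specdualLeqn}), and part (1) via Lemma~\ref{compactlemma2}. Your write-up is somewhat more explicit about the boundary convention and the positivity of $h_{\nu_\epsilon}$, but there is no substantive difference.
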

In Section 5 we show that the above choice of $\ax,\ay$ satisfies Assumption \ref{ass2}.

\subsection{Objectivity}

We demonstrate that our analytic framework for identifying finite-time coherent sets is \emph{objective} or \emph{frame-invariant}, meaning that the method produces the same features when subjected to time-dependent ``proper orthogonal + translational'' transformations; see \cite{truesdellnoll,hallerobjective}.

In continuous time, to test for objectivity, one makes a time-dependent coordinate change $x\mapsto Q(t)x+b(t)$ where $Q(t)$ is a proper othogonal linear transformation and $b(t)$ is a translation vector, for $t\in[t_0,t_1]$.
The discrete time analogue is to consider our initial domain $X$ transformed to $\dot{X}=\{Q(t_0)x+b(t_0):x\in X\}$ and our final domain $Y_\epsilon$ transformed to $\dot{Y}_\epsilon=\{Q(t_1)y+b(t_1):y\in Y_\epsilon\}$.
For shorthand, we use the notation $\Phi_{t_0}$ and $\Phi_{t_1}$ for these transformations, so that $\dot{X}_\epsilon=\Phi_{t_0}(X)$ and $\dot{Y}_\epsilon=\Phi_{t_1}(Y_\epsilon)$.
The deterministic transformation $\dot{T}:\Phi_{t_0}(M)\to \Phi_{t_1}(M)$, which we wish to analyse using our transfer operator framework, is given by $\dot{T}=\Phi_{t_1}\circ T\circ \Phi_{t_0}^{-1}$.
The change of frames is summarised in the commutative diagram below.
$$
\begin{CD}
M @>T>> M\\
@VV\Phi_{t_0}V @VV\Phi_{t_1}V\\
\Phi_{t_0}(M) @>\dot{T}>> \Phi_{t_1}(M)
\end{CD}
$$
We define $\dot{\mu}=\mu\circ \Phi_{t_0}^{-1}$ and $\dot{\nu}_\epsilon=\nu_\epsilon\circ \Phi_{t_1}^{-1}$ as the transformed versions of $\mu$ and $\nu_\epsilon$;  $\dot{\mu}$ and $\dot{\nu}_\epsilon$ are probability measures on $\dot{X}$ and $\dot{Y}$, respectively.
We further define the Perron-Frobenius operators for $\Phi_{t_0}$ and $\Phi_{t_1}$, namely $\P_{\Phi_{t_0}}:L^2(X,\mu)\to L^2(\dot{X},\dot{\mu})$ and $\P_{\Phi_{t_1}}:L^2(Y,\nu_\epsilon)\to L^2(\dot{Y},\dot{\nu}_\epsilon)$ by $\P_{\Phi_{t_0}}f=f\circ \Phi_{t_0}^{-1}$ and $\P_{\Phi_{t_1}}f=f\circ \Phi_{t_1}^{-1}$, respectively.

The operators $\Dxd$ and $\Dyd$ are defined on $\dot{X}$ and $\dot{Y}'_\epsilon$ analogously to the definitions of $\Dx$ and $\Dy$; that is, $\Dxd g(y)=\int_{\dot{X}}\ax(y-x)g(x)\ dx$ and $\Dyd g(y)=\int_{\dot{Y'}_\epsilon} \ay(y-x)g(x)\ dx$, where $\ax=\ax=\mathbf{1}_{B_\epsilon(0)}/\ell(B_\epsilon(0))$.
Note in particular, that these operators are created \emph{independently} of the operators $\Dx$ and $\Dy$, and that $\Dxd$ and $\Dyd$ are \emph{not} simply defined by direct transformation of $\Dx$ and $\Dy$ with $\P_{\Phi_{t_0}},\P_{\Phi_{t_1}}$.
While the transformed deterministic dynamics $\dot{T}$ must of course be defined by conjugation with $\Phi_{t_0},\Phi_{t_1}$, our intention with the ``small random perturbation'' model is to use the \emph{same} diffusion operators $\Dx$, $\Dy$, \emph{without} any knowledge of coordinate changes, to construct an $\Led$.
Note that because $\Phi_{t_0}$ and $\Phi_{t_1}$ are proper othogonal affine transformations, one has
\begin{equation}
\label{commutativediff}
\P_{\Phi_{t_0}}\circ\Dx=\Dxd\circ\P_{\Phi_{t_0}}\mbox{ and } \P_{\Phi_{t_1}}\circ\Dy=\Dyd\circ\P_{\Phi_{t_1}};
  \end{equation}
geometrically, the LHSs are an averaging over $\epsilon$-balls followed by the transformations $\Phi_{t_0}$ (resp.\ $\Phi_{t_1}$), while the RHSs apply the transformation $\Phi_{t_0}$ (resp.\ $\Phi_{t_1}$) first and then average.
The result is the same because $\Phi_{t_0}^{-1}(B_\epsilon(x))=B_\epsilon(\Phi_{t_0}^{-1}(x))$;  similarly, for $\Phi_{t_1}$.

The Perron-Frobenius operator for $\dot{T}$, denoted $\P_{\dot{T}}$ is $\P_{\dot{T}}=\P_{\Phi_{t_1}}\circ\P\circ\P_{\Phi_{t_0}}^{-1}$.
Denote $h_{\dot{\mu}}=\P_{\Phi_{t_0}}h_\mu$.
Then, $\Led:L^2(\dot{X},\dot{\mu})\to L^2(\dot{Y}_\epsilon,\dot{\nu})$ is defined by $\Led f=\Dyd\circ\P_{\dot{T}}\circ \Dxd(f\cdot h_{\dot{\mu}})/\Dyd\circ\P_{\dot{T}}\circ \Dxd(h_{\dot{\mu}})$.

\begin{theorem}
The operator $\Le$ in the original frame and the operator $\Led$ in the transformed frame satisfy the commutative diagram:
$$
\begin{CD}
L^2(X,\mu) @>\Le>> L^2(Y_\epsilon,\nu_\epsilon)\\
@VV\P_{\Phi_{t_0}}V @VV\P_{\Phi_{t_1}}V\\
L^2(\dot{X},\dot{\mu})@>\Led>>L^2(\dot{Y}_\epsilon,\dot{\nu}_\epsilon)
\end{CD}.
$$
\end{theorem}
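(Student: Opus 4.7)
The plan is to establish $\P_{\Phi_{t_1}}\circ\Le = \Led\circ\P_{\Phi_{t_0}}$ by pushing $\P_{\Phi_{t_1}}$ through the definition of $\Le$, using three tools: (a) the multiplicativity of $\P_{\Phi_{t_0}}$ and $\P_{\Phi_{t_1}}$ on products and quotients, (b) the commutativity relations in (\ref{commutativediff}), and (c) the definition $\P_{\dot T}=\P_{\Phi_{t_1}}\circ\P\circ\P_{\Phi_{t_0}}^{-1}$, which immediately rearranges to $\P_{\Phi_{t_1}}\circ\P=\P_{\dot T}\circ\P_{\Phi_{t_0}}$.

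First I would record the multiplicative property. Because $\Phi_{t_0}$ and $\Phi_{t_1}$ are proper orthogonal affine transformations, they have unit Jacobian determinant, so $\P_{\Phi_{t_0}}f=f\circ\Phi_{t_0}^{-1}$ is just composition, and hence $\P_{\Phi_{t_0}}(fg)=(\P_{\Phi_{t_0}}f)(\P_{\Phi_{t_0}}g)$ and likewise for $\P_{\Phi_{t_1}}$ and for quotients. In particular, $\P_{\Phi_{t_0}}(f\cdot h_\mu)=(\P_{\Phi_{t_0}}f)\cdot h_{\dot\mu}$, since by definition $h_{\dot\mu}=\P_{\Phi_{t_0}}h_\mu$.

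Next I would chase the numerator. Applying $\P_{\Phi_{t_1}}$ to $\Dy\circ\P\circ\Dx(f\cdot h_\mu)$, I can move $\P_{\Phi_{t_1}}$ past $\Dy$ using (\ref{commutativediff}) to replace $\P_{\Phi_{t_1}}\circ\Dy$ with $\Dyd\circ\P_{\Phi_{t_1}}$; then insert $\P_{\Phi_{t_0}}^{-1}\circ\P_{\Phi_{t_0}}=\Id$ to convert $\P_{\Phi_{t_1}}\circ\P$ into $\P_{\dot T}\circ\P_{\Phi_{t_0}}$; then push $\P_{\Phi_{t_0}}$ past $\Dx$ via the second half of (\ref{commutativediff}) to obtain $\Dxd\circ\P_{\Phi_{t_0}}$; and finally use multiplicativity to rewrite $\P_{\Phi_{t_0}}(f\cdot h_\mu)$ as $(\P_{\Phi_{t_0}}f)\cdot h_{\dot\mu}$. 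The outcome is
\begin{equation*}
\P_{\Phi_{t_1}}\bigl(\Dy\circ\P\circ\Dx(f\cdot h_\mu)\bigr)=\Dyd\circ\P_{\dot T}\circ\Dxd\bigl((\P_{\Phi_{t_0}}f)\cdot h_{\dot\mu}\bigr).
\end{equation*}
Running exactly the same manipulation with $f$ replaced by $\mathbf{1}_X$ (so $f\cdot h_\mu$ is replaced by $h_\mu$) gives $\P_{\Phi_{t_1}}h_{\nu_\epsilon}=\Dyd\circ\P_{\dot T}\circ\Dxd(h_{\dot\mu})=h_{\dot\nu_\epsilon}$.

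Finally, dividing numerator by denominator (and using multiplicativity once more to split $\P_{\Phi_{t_1}}$ over the quotient in the definition of $\Le f$) yields
\begin{equation*}
\P_{\Phi_{t_1}}\Le f=\frac{\Dyd\circ\P_{\dot T}\circ\Dxd\bigl((\P_{\Phi_{t_0}}f)\cdot h_{\dot\mu}\bigr)}{h_{\dot\nu_\epsilon}}=\Led(\P_{\Phi_{t_0}}f),
\end{equation*}
which is the desired commutativity. The only nontrivial ingredient is the identity (\ref{commutativediff}), which has already been justified in the excerpt; everything else is bookkeeping that relies on the Jacobian-one property of the frame change. The main pitfall to watch is the bookkeeping of the normalising densities, which I handle by observing that the denominator transforms by exactly the same rule as the numerator.
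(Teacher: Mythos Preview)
Your proposal is correct and follows essentially the same approach as the paper: both arguments rely on the multiplicativity of the composition operators $\P_{\Phi_{t_0}},\P_{\Phi_{t_1}}$, the commutation relations (\ref{commutativediff}), and the factorisation $\P_{\dot T}=\P_{\Phi_{t_1}}\circ\P\circ\P_{\Phi_{t_0}}^{-1}$. The only cosmetic difference is direction: the paper starts from $\Led f$ and unwinds it to $\P_{\Phi_{t_1}}\bigl(\Le(\P_{\Phi_{t_0}}^{-1}f)\bigr)$, whereas you start from $\P_{\Phi_{t_1}}\Le f$ and push forward to $\Led(\P_{\Phi_{t_0}}f)$.
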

\begin{proof}
Note that because $\P_{\Phi_{t_0}}$ is a composition operator, one has $\P_{\Phi_{t_0}}(f\cdot g)=\P_{\Phi_{t_0}}f \cdot \P_{\Phi_{t_0}}g$ (and similarly for $\P_{\Phi_{t_1}}$);  we use this fact below.
We also make use of (\ref{commutativediff}).
\begin{eqnarray*}
\Led f&=&\Dyd\circ\P_{\dot{T}}\circ \Dxd(f\cdot h_{\dot{\mu}})/\Dyd\circ\P_{\dot{T}}\circ \Dxd(h_{\dot{\mu}})\\
&=&\Dyd\circ \P_{\Phi_{t_1}}\circ\P\circ\P_{\Phi_{t_0}}^{-1}\circ \Dxd(f\cdot h_{\dot{\mu}})/\Dyd\circ\P_{\Phi_{t_1}}\circ\P\circ\P_{\Phi_{t_0}}^{-1}\circ \Dxd(h_{\dot{\mu}})\\
&=&\P_{\Phi_{t_1}}\circ\Dy\circ\P\circ\Dx\circ\P_{\Phi_{t_0}}^{-1}(f\cdot h_{\dot{\mu}})/\P_{\Phi_{t_1}}\circ\Dy\circ\P\circ \Dx\circ\P_{\Phi_{t_0}}^{-1}(h_{\dot{\mu}})\\
&=&\P_{\Phi_{t_1}}\circ\Pe\circ\P_{\Phi_{t_0}}^{-1}(f\cdot h_{\dot{\mu}})/\P_{\Phi_{t_1}}\circ\Pe\circ\P_{\Phi_{t_0}}^{-1}(h_{\dot{\mu}})\\
&=&\P_{\Phi_{t_1}}\circ\Pe(\P_{\Phi_{t_0}}^{-1}f\cdot \P_{\Phi_{t_0}}^{-1}h_{\dot{\mu}})/\P_{\Phi_{t_1}}\circ\Pe(\P_{\Phi_{t_0}}^{-1}(h_{\dot{\mu}}))\\
&=&\P_{\Phi_{t_1}}\circ\Pe(\P_{\Phi_{t_0}}^{-1}f\cdot h_{{\mu}})/\P_{\Phi_{t_1}}\circ\Pe(h_{{\mu}})\\
&=&\P_{\Phi_{t_1}}\left(\Pe(\P_{\Phi_{t_0}}^{-1}f\cdot h_{{\mu}})/\Pe(h_{{\mu}})\right)\\
&=&\P_{\Phi_{t_1}}\left(\Le(\P_{\Phi_{t_0}}^{-1}f)\right),
\end{eqnarray*}
as required.
\end{proof}

\begin{corollary}
\label{dotcor}
If $\Le f=\lambda g$ where $f$ and $g$ are left and right singular vectors of $\Le$, respectively, then $\Led \P_{\Phi_{t_0}}f=\lambda \P_{\Phi_{t_1}}g$ where $\P_{\Phi_{t_0}}f$ and $\P_{\Phi_{t_1}}g$ are left and right singular vectors of $\Led$, respectively.
\end{corollary}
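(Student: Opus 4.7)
The plan is to leverage the commutative diagram established in the preceding theorem, which states $\Led\circ\P_{\Phi_{t_0}}=\P_{\Phi_{t_1}}\circ\Le$. The eigenvalue/singular value equation will then transfer almost immediately; the nontrivial content is verifying that the images under $\P_{\Phi_{t_0}}$ and $\P_{\Phi_{t_1}}$ retain the singular vector property, which requires observing that these Perron--Frobenius operators are unitary isomorphisms between the relevant $L^2$ spaces.

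First I would compute directly: applying the commutative diagram,
\begin{equation*}
\Led(\P_{\Phi_{t_0}}f)=\P_{\Phi_{t_1}}(\Le f)=\P_{\Phi_{t_1}}(\lambda g)=\lambda\, \P_{\Phi_{t_1}}g,
\end{equation*}
which gives the claimed eigenvalue relation. This step is essentially a one-line consequence of the theorem.

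The substantive step is to justify that $\P_{\Phi_{t_0}}f$ and $\P_{\Phi_{t_1}}g$ genuinely are singular vectors of $\Led$, that is, that $\P_{\Phi_{t_0}}f$ is an eigenvector of $(\Led)^*\Led$ with eigenvalue $\lambda^2$. For this I would observe that since $\dot\mu=\mu\circ\Phi_{t_0}^{-1}$ and $\dot\nu_\epsilon=\nu_\epsilon\circ\Phi_{t_1}^{-1}$ by construction, the composition operators $\P_{\Phi_{t_0}}:L^2(X,\mu)\to L^2(\dot{X},\dot\mu)$ and $\P_{\Phi_{t_1}}:L^2(Y_\epsilon,\nu_\epsilon)\to L^2(\dot{Y}_\epsilon,\dot\nu_\epsilon)$ are unitary (a one-line change-of-variables computation verifies $\|\P_{\Phi_{t_0}}f\|_{\dot\mu}=\|f\|_\mu$, and they are bijections because $\Phi_{t_0},\Phi_{t_1}$ are). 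Consequently $\P_{\Phi_{t_0}}^*=\P_{\Phi_{t_0}}^{-1}$ and $\P_{\Phi_{t_1}}^*=\P_{\Phi_{t_1}}^{-1}$. Rewriting the commutative identity as $\Led=\P_{\Phi_{t_1}}\Le\P_{\Phi_{t_0}}^{-1}$ and taking adjoints,
\begin{equation*}
(\Led)^*\Led=\P_{\Phi_{t_0}}\Le^*\P_{\Phi_{t_1}}^{-1}\P_{\Phi_{t_1}}\Le\P_{\Phi_{t_0}}^{-1}=\P_{\Phi_{t_0}}(\Le^*\Le)\P_{\Phi_{t_0}}^{-1},
\end{equation*}
so $(\Led)^*\Led\,\P_{\Phi_{t_0}}f=\P_{\Phi_{t_0}}(\Le^*\Le f)=\lambda^2\P_{\Phi_{t_0}}f$. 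This confirms $\P_{\Phi_{t_0}}f$ is a left singular vector of $\Led$ with singular value $\lambda$. A symmetric argument (applied to $\Led(\Led)^*$) establishes the corresponding statement for $\P_{\Phi_{t_1}}g$, and unitarity ensures unit norms are preserved.

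The only conceivable obstacle is sign/normalisation conventions for the Perron--Frobenius operator on affine orthogonal maps, but since $Q(t_0)$ and $Q(t_1)$ have unit Jacobian determinant (being orthogonal), $\P_{\Phi_{t_0}}f=f\circ\Phi_{t_0}^{-1}$ is precisely the composition-operator form already used in the theorem's proof; no Jacobian correction intrudes. Thus the corollary reduces to combining the commutative diagram with the unitarity of the frame-change operators, and no further analysis is required.
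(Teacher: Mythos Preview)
Your argument is correct. The paper states this corollary without proof, treating it as an immediate consequence of the commutative diagram; your write-up supplies exactly the details that make this immediate, in particular the observation that $\P_{\Phi_{t_0}}$ and $\P_{\Phi_{t_1}}$ are unitary between the relevant $L^2$ spaces so that $(\Led)^*\Led=\P_{\Phi_{t_0}}(\Le^*\Le)\P_{\Phi_{t_0}}^{-1}$ and the singular vector property transfers.
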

It follows from Corollary \ref{dotcor} that the coherent sets extracted on $\dot{X}$ and $\dot{Y}_\epsilon$ as eg.\ level sets from the singular vectors of $\Led$ will be transformed versions (under $\Phi_{t_0}$ and $\Phi_{t_1}$) of those extracted from $\Le$ on $X$ and $Y_\epsilon$, as required for objectivity.

More generally, if $\Dx$ and $\Dy$ are compact operators representing small diffusion, then $\P_{\Phi_{t_0}}\circ\Dx=\Dxd\circ\P_{\Phi_{t_0}}$ and $\P_{\Phi_{t_1}}\circ\Dy=\Dyd\circ\P_{\Phi_{t_1}}$ is a sufficient condition for the method to be objective.

\section{The case of $T$ a diffeomorphism}

In this section we specialise to the case where $M=X=Y\subset \mathbb{R}^d$ is compact, $T:M\to M$ is a diffeomorphism, and $|\det DT|$ and $h_\mu$ are bounded uniformly above and below.

\subsection{Simplicity of the leading singular value of $\Le$}

Our main result of this section states that the leading singular value of $\Le$ is simple when $\ax=\ay=\mathbf{1}_{B_\epsilon(0)}/\ell(B_\epsilon(0))$;  thus Assumption \ref{ass2} is satisfied.
To set notation, note that one has $\Le^*g(x)=\int_Y k_\epsilon(x,y)g(y)\ d\nu_\epsilon(y)$, so $\mathcal{A}_\epsilon f(y)=\int_X \kappa(x,y)f(x)\ d\mu(x)$, where $\kappa(x,y)=\int_Y k_\epsilon(y,z)k(x,z)\ d\nu(z)$.
Let $\mathcal{A}^q_\epsilon f(y):=\int_X \kappa_q(x,y)\ d\mu(x)$.
The following technical lemma provides sufficient conditions for simplicity of the leading singular value of $\Le$.
\begin{lemma}
\label{simplelemma}
If there exists an integer $q>0$, a $G\in L^1(X,\mu)$ such that $\kappa_q(x,y)\le G(y)$, and a set $A\subset X$ with $\mu(A)>0$ such that $\kappa_q(x,y)>0$ for all $x\in X$ and $y\in A$, then the leading eigenvalue value of $\mathcal{A}_\epsilon$, namely $\sigma_1=1$, is simple.
\end{lemma}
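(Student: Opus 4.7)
The plan is a standard Jentzsch/Perron--Frobenius type positivity argument, exploiting the irreducibility encoded by the hypothesis that $\kappa_q(\,\cdot\,,y)>0$ on all of $X$ for $y$ in a set $A$ of positive $\mu$-measure. Since $\mathcal{A}_\epsilon$ is compact, self-adjoint, and positive with $\mathcal{A}_\epsilon\mathbf{1}_X=\mathbf{1}_X$, the eigenspace for the eigenvalue $1$ is finite-dimensional, consists of real-valued functions, and contains $\mathbf{1}_X$ (by Proposition \ref{Lsingvallemma}). To prove simplicity it therefore suffices to show that any real $f\in L^2(X,\mu)$ with $\mathcal{A}_\epsilon f=f$ and $\langle f,\mathbf{1}\rangle_\mu=0$ must vanish $\mu$-a.e.

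From $\mathcal{A}_\epsilon f=f$ one iterates to get $\mathcal{A}_\epsilon^q f=f$. The kernel representation of $\mathcal{A}_\epsilon^q$ together with the triangle inequality gives, for $\mu$-a.e.\ $y$,
\begin{equation*}
|f(y)|=\left|\int_X \kappa_q(x,y)\,f(x)\,d\mu(x)\right|\le \int_X \kappa_q(x,y)\,|f(x)|\,d\mu(x)=\mathcal{A}_\epsilon^q|f|(y).
\end{equation*}
The domination $\kappa_q(x,y)\le G(y)$ with $G\in L^1$, together with $|f|\in L^1(X,\mu)$ (since $\mu$ is finite and $f\in L^2$), guarantees that $\mathcal{A}_\epsilon^q|f|$ is defined pointwise $\mu$-a.e. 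The next step is to integrate both sides against $\mu$. Self-adjointness of $\mathcal{A}_\epsilon$ combined with $\mathcal{A}_\epsilon\mathbf{1}_X=\mathbf{1}_X$ yields $\int_X \mathcal{A}_\epsilon g\,d\mu=\langle g,\mathcal{A}_\epsilon\mathbf{1}\rangle_\mu=\int_X g\,d\mu$ for every $g\in L^1$, and iterating gives the same for $\mathcal{A}_\epsilon^q$. Hence
\begin{equation*}
\int_X |f|\,d\mu \le \int_X \mathcal{A}_\epsilon^q|f|\,d\mu = \int_X |f|\,d\mu,
\end{equation*}
so the pointwise inequality is in fact an equality at $\mu$-a.e.\ $y$.

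The final step extracts constancy of sign from irreducibility. Since $\mu(A)>0$, I can pick some $y_0\in A$ at which the equality $\left|\int \kappa_q(x,y_0)f(x)\,d\mu(x)\right|=\int \kappa_q(x,y_0)|f(x)|\,d\mu(x)$ holds. The equality case of the triangle inequality for real-valued integrands forces $\kappa_q(\,\cdot\,,y_0)f$ to have constant sign $\mu$-a.e.; but $\kappa_q(x,y_0)>0$ for every $x\in X$, so $f$ itself is of constant sign $\mu$-a.e.\ on $X$. Together with the orthogonality $\int_X f\,d\mu=0$ this forces $f=0$ $\mu$-a.e., establishing simplicity.

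The main obstacle I anticipate is purely measure-theoretic bookkeeping: making sure one can select a single $y_0\in A$ at which both the $\mu$-a.e.\ equality $|f|(y_0)=\mathcal{A}_\epsilon^q|f|(y_0)$ and the full positivity $\kappa_q(x,y_0)>0$ for every $x\in X$ hold simultaneously. This is straightforward because the equality holds on a full-$\mu$-measure subset of $X$, which necessarily has positive $\mu$-measure intersection with $A$, and for every $y\in A$ the positivity hypothesis already holds pointwise (not merely a.e.) in $x$. The $L^1$-domination $G$ plays a supporting but essential role: it justifies all the pointwise manipulations and ensures that Fubini/Tonelli may be freely invoked when exchanging orders of integration.
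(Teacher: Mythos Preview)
Your argument is correct. It is, however, a genuinely different route from the paper's. The paper does not argue directly: it invokes Theorem~5.7.4 of Lasota--Mackey, which says that a Markov operator with kernel satisfying exactly the two hypotheses of the lemma (the $L^1$ domination by $G$ and the positivity on $X\times A$) is \emph{asymptotically stable} in $L^1(X,\mu)$, hence has a unique normalised fixed density; since $\mathbf{1}_X$ is such a density and $L^2\subset L^1$, simplicity in $L^2$ follows. Your proof is the self-contained Perron--Frobenius/Jentzsch argument underlying that theorem: show $|f|\le \mathcal{A}_\epsilon^q|f|$, integrate to force equality a.e., then use the equality case of the triangle inequality at a point of $A$ to conclude constant sign. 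What your approach buys is transparency and independence from an external reference; what the paper's buys is brevity. One small observation: in your direct argument the domination hypothesis $\kappa_q\le G$ is essentially idle---Tonelli already handles the interchange for the nonnegative integrand $\kappa_q|f|$, and finiteness a.e.\ of $\mathcal{A}_\epsilon^q|f|$ follows from $\int \mathcal{A}_\epsilon^q|f|\,d\mu=\int|f|\,d\mu<\infty$. The $G$ hypothesis is there because it is a genuine hypothesis of the Lasota--Mackey theorem the paper quotes, not because your argument needs it.
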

\begin{proof}
By Theorem 5.7.4 \cite{lasota_mackey2} under the hypotheses of the lemma, the Markov operator $\mathcal{A}$ is ``asymptotically stable'' in $L^1(X,\mu)$, meaning that there exists a \emph{unique} $h\in L^1$ (scaled so that $\int_X h\ d\mu=1$) such that $\mathcal{A}h=h$ and $\lim_{k\to\infty} \|\mathcal{A}^kf-h\|_1=0$ for all $f\in L^1$ scaled so that $\int_X f\ d\mu=1$.  Since $\mathcal{A}\mathbf{1}_X=1_X$ by Assumptions \ref{Lasses} (2)-(3), we must have $h=\mathbf{1}_X$ is the unique scaled fixed point in $L^1(X,\mu)$ and as $L^2(X,\mu)\subset L^1(X,\mu)$, $\mathbf{1}_X$ is also the unique scaled fixed point of $\mathcal{A}$ in $L^2(X,\mu)$.
As the eigenvalue 1 is isolated in $L^2(X,\mu)$, it is simple.
Simplicity of the leading singular value of $\mathcal{L}$ follows immediately.
\end{proof}

The ``covering'' hypothesis in Lemma \ref{simplelemma} ($\kappa_q(x,y)>0$ for all $x\in X, y\in A$) can be interpreted dynamically as follows.
Roughly speaking, the kernel $\kappa(x,y)$ is positive if it is possible to be transported from $x\in X$ to an intermediate point $z\in Y_\epsilon$ under forward-time evolution, and then back to $y\in X$ under a dual backward-time evolution.
Thus, $\kappa_q(x,y)>0$ for all $x\in X$ and $y\in A$ if after $q$ forward-backward iterations, there is a positive $\mu$-measure set $A\subset X$ reachable from \emph{all} $x\in X$.
One way to satisfy this is for the kernel $k_\epsilon$ to include some diffusion so that the reachable regions strictly expand with each iteration of $\mathcal{A}_\epsilon$.
This is exactly what happens in a controlled way when we use $\ax=\ay=\mathbf{1}_{B_\epsilon(0)}/\ell(B_\epsilon(0))$.

\begin{proposition}
If  $M=X=Y\subset \mathbb{R}^d$ is compact, $T:M\to M$ is a diffeomorphism, $|\det DT|$ and $h_\mu$ are bounded uniformly above and below, and $\ax=\ay=\mathbf{1}_{B_\epsilon(0)}/\ell(B_\epsilon(0))$, the leading singular value of $\Le$ is simple.
\end{proposition}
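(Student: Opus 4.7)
The plan is to apply Lemma~\ref{simplelemma}: it suffices to produce an integer $q\ge 1$, a majorant $G\in L^1(X,\mu)$ with $\kappa_q(x,y)\le G(y)$, and a set $A\subset X$ of positive $\mu$-measure on which $\kappa_q(x,y)>0$ for every $x\in X$. Throughout, write $L=\sup_M\|DT\|$ and $D=\diam(M)$; both are finite by compactness of $M$ and the diffeomorphism hypothesis.

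For the positivity (``covering'') hypothesis, the crucial geometric fact afforded by the choice $\ax=\ay=\mathbf{1}_{B_\epsilon(0)}/\ell(B_\epsilon(0))$ is that, by the reformulation~\eqref{withinvball}, $k_\epsilon(x,z)>0$ whenever $\ell(B_\epsilon(x)\cap T^{-1}B_\epsilon(z))>0$, and in particular for every $z\in B_\epsilon(T(x))$ (one may take the convolution variable equal to $x$). Consequently $\kappa(x,y)=\int_{Y_\epsilon}k_\epsilon(y,z)k_\epsilon(x,z)\,d\nu_\epsilon(z)$ is strictly positive whenever $B_\epsilon(T(x))\cap B_\epsilon(T(y))$ has positive Lebesgue measure, i.e.\ whenever $|T(x)-T(y)|<2\epsilon$, which by Lipschitz continuity of $T$ is implied by $|x-y|<2\epsilon/L$. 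A standard Chapman--Kolmogorov induction, using that $h_\mu$ bounded below forces every non-empty open set to have positive $\mu$-measure, yields $\kappa_q(x,y)>0$ whenever $|x-y|<2q\epsilon/L$. Choosing any integer $q>LD/(2\epsilon)$ makes $\kappa_q$ strictly positive on all of $X\times X$, so one may take $A=X$ with $\mu(A)=1>0$.

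For the majorant, bound the numerator of~\eqref{specLeqn} by $\int_{X_\epsilon}\ay(y-Tz)\ax(z-x)\,dz=\ell(B_\epsilon(x)\cap T^{-1}B_\epsilon(y))/\ell(B_\epsilon(0))^2\le 1/\ell(B_\epsilon(0))$. The denominator $h_{\nu_\epsilon}=\Pe h_\mu$ admits a uniform lower bound $c(\epsilon)>0$: indeed $\Pe=\Dy\circ\P\circ\Dx$, each of the three operators preserves two-sided positive bounds on densities (the averaging operators trivially, and the Perron--Frobenius operator via the two-sided bounds on $|\det DT|$), and $h_\mu$ itself is bounded below. Hence $k_\epsilon\le K:=(c(\epsilon)\ell(B_\epsilon(0)))^{-1}<\infty$. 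Now $\mathcal{A}_\epsilon$ is a self-adjoint Markov operator on $(X,\mu)$ with $\int\kappa(x,y)\,d\mu(x)=1$ (from $\mathcal{A}_\epsilon\mathbf{1}_X=\mathbf{1}_X$), so using Assumption~\ref{Lasses}(3) in the form $\int k_\epsilon(x,z)\,d\nu_\epsilon(z)=1$ gives $\kappa(x,y)\le K\int k_\epsilon(x,z)\,d\nu_\epsilon(z)=K$; iterating via $\kappa_{q+1}(w,y)=\int \kappa_q(w,x)\kappa(x,y)\,d\mu(x)\le K\int\kappa_q(w,x)\,d\mu(x)=K$ produces $\kappa_q\le K$ for every $q$, so the constant function $G\equiv K\in L^1(X,\mu)$ works.

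The main technical obstacle is the uniform lower bound on $h_{\nu_\epsilon}$: the diffeomorphism assumption on $T$ together with the two-sided bounds on $h_\mu$ and $|\det DT|$ are precisely what is needed so that $h_{\nu_\epsilon}$ does not degenerate anywhere on $Y_\epsilon$. Boundary effects at $\partial M$ are absorbed by the convention (footnoted earlier) that $\alpha_\epsilon$ is modified near the boundary to remain stochastic.
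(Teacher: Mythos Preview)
Your argument is correct and follows essentially the same strategy as the paper: both invoke Lemma~\ref{simplelemma}, establish a uniform bound on $k_\epsilon$ to obtain the $L^1$ majorant, and verify the covering hypothesis by showing that iterates of $\mathcal{A}_\epsilon$ have strictly expanding support. The only notable difference is in the covering step: the paper (Lemma~\ref{coverlemma}) tracks the support of $\mathcal{A}_\epsilon\delta_x$ directly through the composition $\Dx^*\mathcal{K}\Dy^*\Dy\mathcal{P}\Dx$ and obtains an expansion rate of $\epsilon/2$ per iteration that is \emph{independent} of $T$, whereas your Lipschitz argument gives expansion $2\epsilon/L$ per step; both approaches rely implicitly on connectedness of $M$ to conclude that the support eventually fills $X$.
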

\begin{proof}
By Lemma \ref{kboundlemma} (see Appendix), one has that $k_\epsilon$ is bounded.
Note that $$\kappa(x,y)=\int_Y k(y,z)k(x,z)\ d\nu(z)\le \|k\|_\infty\int_Y k(y,z)\ d\nu(z)=\|k\|_\infty.$$
Further,
\begin{eqnarray*}
\kappa_q(x,y)&=&\underbrace{\int_X\int_X\cdots\int_X}_{q-1{\rm\ times}} \kappa(x,x_1)\cdots\kappa(x_{q-2},x_{q-1})\kappa(x_{q-1},y)\ d\mu(x_1)\cdots d\mu(x_{q-1})\\
&\le& \int_X\int_X\cdots\int_X \|k\|_\infty^{q-1}\ d\mu(x_1)\cdots d\mu(x_{q-1})=\|k\|_\infty^{q-1}.
\end{eqnarray*}
Thus, in the hypotheses of Lemma \ref{simplelemma} we may take $G(y)\equiv \|k\|_\infty^{q-1}\in L^1(X,\mu)$.
By Lemma \ref{coverlemma} (see Appendix) the covering hypothesis of Lemma \ref{simplelemma} is satisfied;  the result follows by Lemma \ref{simplelemma}.
\end{proof}

\subsection{Regularity of singular vectors of $\Le$}
\label{regularsection}

A standard heuristic for obtaining partitions from functions is to threshold on level sets;  such an approach has been used in many previous applications of transfer operator methods to determine almost-invariant and metastable sets for autonomous or time-independent dynamical systems \cite{FD03,froyland_05}.
By employing this heuristic, forming eg. $X_1=\{x\in M: f(x)<c\}$ where $f$ is a sub-dominant singular vector of $\mathcal{L}_\epsilon$, and $c\in\mathbb{R}$ is some threshold, if $f$ has some regularity, this places some limitations on the geometrical form of $X_1$.

We derive explicit expressions for the Lipschitz and H\"older constants for the eigenfunctions of $\mathcal{L}_\epsilon^*\mathcal{L}_\epsilon$ and $\mathcal{L}_\epsilon\mathcal{L}_\epsilon^*$, showing how these constants vary as a function of the perturbation parameter $\epsilon$.
%
We begin with two lemmas that demonstrate the regularity of $\Dx f$, for $f\in L^2(X,\mu)$.
Completely analogous results hold for the regularity of $\Dy g$, $g\in L^2(Y'_\epsilon,\nu'_\epsilon)$.
In order to obtain a Lipschitz bound on $\Dx f$, one requires $f$ to be bounded;  on the other hand, the H\"older bound is in terms of the $L^2$-norm of $f$, which is better suited to our setup.
In one direction (right singular vectors of $\mathcal{L}_\epsilon$) we can combine the H\"older bound and the Lipschitz bound to provide a better estimate than a direct H\"older bound.

We remark that our diffusion kernels $\ax, \ay$ need \emph{not} be Lipschitz nor H\"older.
Some prior work has considered the regularity of the range of $\mathcal{P}$ followed by a smoothing operator, denoted by $\mathcal{D}_\epsilon$, in either a $C^0$ or $L^1$ setting:  this includes Zeeman, Lemma 5 \cite{zeeman}, who discusses the equicontinuity of the image of the unit sphere in $C^0$ and $\alpha_\epsilon(x)=n\exp(-\|x\|^2)/2\epsilon$ is smooth, and Junge, Prop. 3.1 \cite{jungethesis}, who bounds the Lipschitz constant of $\mathcal{D}_\epsilon\mathcal{P} f$ in terms of the $L^1$-norm of $f$ using a Lipschitz kernel $\alpha_\epsilon$ (the bound is $L_\alpha\|f\|_{L^1(\ell)}/\epsilon^{1+d}$, where $L_\alpha$ is the Lipschitz constant of a fixed kernel $\alpha$ that generates the family $\alpha_\epsilon$).
The finite-time aspect of our approach (as opposed to the effectively ``asymptotic'' aspect of fixed points of $\mathcal{D}_\epsilon\mathcal{P}$ in \cite{zeeman,jungethesis}) necessitates the use of the operator $\mathcal{D}_\epsilon\mathcal{P}\mathcal{D}_\epsilon$ in order for the singular vectors at both the initial and final times to contain meaningful dynamical information.

As most of the proofs of the following results are technical, we have deferred them to the Appendix.

\begin{lemma}
\label{lipschitzlemma}
Let $f\in L^\infty(X,\ell)$, where $X\subset \mathbb{R}^d$ is compact, $1\le d\le 3$.
Let $\alpha_\epsilon=\mathbf{1}_{B_\epsilon}/\ell(B_\epsilon)$.
Then $\Dx f$ is globally Lipschitz on $X_\epsilon$ with Lipschitz constant bounded above by $C_L(d)\|f\|_{L^\infty}/\epsilon$, where $C_L(d)=1, 4/\pi, 3/2,$ for dimensions $1, 2, 3$, respectively.
\end{lemma}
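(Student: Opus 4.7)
The plan is to reduce the Lipschitz estimate to a geometric bound on the Lebesgue measure of the symmetric difference of two $\epsilon$-balls. Since $\Dx f(y)=\frac{1}{\ell(B_\epsilon)}\int_X \mathbf{1}_{B_\epsilon(y)}(x)\,f(x)\,dx$, I would bound
\begin{align*}
|\Dx f(y_1)-\Dx f(y_2)| &\le \frac{\|f\|_{L^\infty}}{\ell(B_\epsilon)}\int_X \bigl|\mathbf{1}_{B_\epsilon(y_1)}(x)-\mathbf{1}_{B_\epsilon(y_2)}(x)\bigr|\,dx \\
&\le \frac{\|f\|_{L^\infty}}{\ell(B_\epsilon)}\,\ell\bigl(B_\epsilon(y_1)\triangle B_\epsilon(y_2)\bigr),
\end{align*}
extending the domain of integration from $X$ to $\mathbb{R}^d$ in the second line. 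By translation and rotation invariance I may assume $y_1=0$ and $y_2=he_1$ with $h>0$, and by the rescaling $x\mapsto x/\epsilon$ it suffices to treat $\epsilon=1$, because $\ell(B_\epsilon(y_1)\triangle B_\epsilon(y_2))=\epsilon^d\ell(B_1(0)\triangle B_1((h/\epsilon)e_1))$ while $\ell(B_\epsilon)=\omega_d\epsilon^d$, with $\omega_d=\ell(B_1)$. Therefore the claim reduces to verifying
\[
L_d\;:=\;\sup_{h>0}\frac{\ell\bigl(B_1(0)\triangle B_1(he_1)\bigr)}{h}\;\le\;C_L(d)\,\omega_d.
\]

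For each $d\in\{1,2,3\}$ I would write $f(h):=\ell(B_1(0)\triangle B_1(he_1))=2\omega_d-2V_d(h)$, where $V_d(h)$ is the intersection volume of two unit balls whose centers lie at distance $h$. For $d=1$, a direct calculation gives $f(h)=2h$ on $[0,2]$ and $f(h)=4$ beyond, so $L_1=2$. For $d=3$, the spherical-cap formula $V_3(h)=\pi(2-h)^2(4+h)/12$ simplifies the symmetric difference to $f(h)=2\pi h-\pi h^3/6$ on $[0,2]$; the ratio $f(h)/h=2\pi-\pi h^2/6$ is decreasing, so $L_3=\lim_{h\to 0^+}f(h)/h=2\pi$. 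For $d=2$, the circular lens formula yields $f(h)=2\pi-4\cos^{-1}(h/2)+2h\sqrt{1-h^2/4}$ on $[0,2]$; I would check $f'(0)=4$ and, by differentiating a second time, that $f''(h)=h(3h^2-16)/\bigl(8(1-h^2/4)^{3/2}\bigr)<0$ on $(0,2)$. Concavity together with $f(0)=0$ then forces $f(h)\le f'(0)h=4h$ on $[0,2]$; for $h\ge 2$ one has $f(h)=2\pi\le 4h$. Thus $L_2=4$. Dividing by $\omega_1=2$, $\omega_2=\pi$, $\omega_3=4\pi/3$ recovers $C_L(d)=1,\ 4/\pi,\ 3/2$ respectively.

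The hard part is the two-dimensional case: the closed form involves $\cos^{-1}$ and the supremum of $f(h)/h$ is attained only in the limit $h\to 0^+$, so to obtain the uniform bound $f(h)\le 4h$ one needs the second-derivative concavity computation rather than a direct pointwise estimate. The other two cases reduce to elementary polynomial comparisons on $[0,2]$. The restriction $1\le d\le 3$ is essentially one of convenience in producing explicit closed forms for $V_d(h)$; the same scheme would in principle deliver $C_L(d)=2\omega_{d-1}/\omega_d$ in any dimension.
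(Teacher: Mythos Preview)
Your argument is correct and arrives at the same constants, but the route differs from the paper's in one respect worth noting. Both proofs begin with the identical reduction
\[
|\Dx f(y_1)-\Dx f(y_2)|\le \frac{\|f\|_{L^\infty}}{\ell(B_\epsilon)}\,\ell\bigl(B_\epsilon(y_1)\triangle B_\epsilon(y_2)\bigr).
\]
From here, the paper only computes the \emph{limit} $\lim_{\|\gamma\|\to 0}\ell(B_\epsilon(y+\gamma)\triangle B_\epsilon(y))/(\ell(B_\epsilon)\|\gamma\|)$ in each dimension, obtaining the stated constants as local (infinitesimal) Lipschitz bounds, and then invokes a separate lemma (a ``local-to-global Lipschitz'' result proved later in the appendix via a compactness/covering argument) to upgrade this to a global Lipschitz constant. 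You instead show directly that $\sup_{h>0} f(h)/h$ is attained as $h\to 0^+$, using concavity of $f$ in the nontrivial case $d=2$; this is more self-contained and avoids the auxiliary lemma entirely. Your approach also makes transparent the general pattern $C_L(d)=2\omega_{d-1}/\omega_d$, which the paper's dimension-by-dimension limit computations obscure.

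One minor correction: your expression for $f''(h)$ in the $d=2$ case is not right. From $f(h)=2\pi-4\cos^{-1}(h/2)+h\sqrt{4-h^2}$ one gets $f'(h)=2\sqrt{4-h^2}$ (so $f'(0)=4$ as you state), and hence $f''(h)=-2h/\sqrt{4-h^2}$, not $h(3h^2-16)/\bigl(8(1-h^2/4)^{3/2}\bigr)$. The conclusion $f''<0$ on $(0,2)$ is unaffected, and indeed the simpler observation that $f'(h)=2\sqrt{4-h^2}$ is decreasing already gives concavity without computing $f''$ at all.
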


\begin{lemma}
\label{holderlemma}
Let $f\in L^2(X,\ell)$, where $X\subset \mathbb{R}^d$, $1\le d\le 3$.
Let $\alpha_\epsilon=\mathbf{1}_{B_\epsilon}/\ell(B_\epsilon)$.
Then $\Dx f$ is globally H\"older on $X_\epsilon$:
\begin{equation}
\label{holdereqn}
\left|\Dx f(x)-\Dx f(y)\right|\le \|f\|_{L^2(\ell)}\frac{C(d)}{\epsilon^{(1+d)/2}}\|x-y\|^{1/2},\quad\mbox{ for all $x,y\in X_\epsilon$,}
\end{equation}
where $C(d)=1/\sqrt{2}, \sqrt{2/\pi}, 3/(2\sqrt{2\pi}),$ for dimensions $1, 2, 3$, respectively.
\end{lemma}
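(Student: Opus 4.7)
The plan is to exploit the simple form of $\alpha_\epsilon=\mathbf{1}_{B_\epsilon(0)}/\ell(B_\epsilon(0))$ and reduce the H\"older estimate to a purely geometric question about the Lebesgue measure of the symmetric difference of two $\epsilon$-balls. Writing
\[
\Dx f(x)-\Dx f(y)=\frac{1}{\ell(B_\epsilon(0))}\int_X f(z)\bigl(\mathbf{1}_{B_\epsilon(x)}(z)-\mathbf{1}_{B_\epsilon(y)}(z)\bigr)\,d\ell(z),
\]
and applying Cauchy--Schwarz, one obtains
\[
|\Dx f(x)-\Dx f(y)|\le \frac{\|f\|_{L^2(\ell)}}{\ell(B_\epsilon(0))}\,\sqrt{\ell\bigl(B_\epsilon(x)\triangle B_\epsilon(y)\bigr)},
\]
since $(\mathbf{1}_{B_\epsilon(x)}-\mathbf{1}_{B_\epsilon(y)})^2=\mathbf{1}_{B_\epsilon(x)\triangle B_\epsilon(y)}$. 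With $\ell(B_\epsilon(0))=c_d\epsilon^d$ for $c_1=2$, $c_2=\pi$, $c_3=4\pi/3$, matching the claimed constants reduces to establishing the dimensional estimate
\[
\ell\bigl(B_\epsilon(x)\triangle B_\epsilon(y)\bigr)\le K(d)\,\epsilon^{d-1}\,\|x-y\|,
\]
with $K(1)=2$ and $K(2)=K(3)=2\pi$, which I would then verify dimension by dimension.

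For $d=1$ the symmetric difference of two $\epsilon$-intervals at distance $r\le 2\epsilon$ consists of two disjoint subintervals of total length $2r$, so $K(1)=2$ is immediate. For $d=3$ the classical spherical-lens formula for the intersection volume of two balls of common radius $\epsilon$ with centers at distance $r$ yields
\[
\ell\bigl(B_\epsilon(x)\triangle B_\epsilon(y)\bigr)=2\pi\epsilon^2 r-\tfrac{\pi}{6}r^3\le 2\pi\epsilon^2 r,\qquad r=\|x-y\|\le 2\epsilon,
\]
so $K(3)=2\pi$ drops out exactly; the complementary case $r>2\epsilon$, where the balls are disjoint and the symmetric difference equals $8\pi\epsilon^3/3$, is also immediate from $8\pi\epsilon^3/3\le 2\pi\epsilon^2 r$ when $r\ge 4\epsilon/3$.

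The intermediate case $d=2$ is the only one requiring genuine work. The disk-intersection formula gives
\[
\ell\bigl(B_\epsilon(x)\triangle B_\epsilon(y)\bigr)=2\pi\epsilon^2-4\epsilon^2\arccos\!\bigl(r/(2\epsilon)\bigr)+r\sqrt{4\epsilon^2-r^2},\qquad r\le 2\epsilon,
\]
and after substituting $u=r/(2\epsilon)\in[0,1]$, the desired inequality $\ell(B_\epsilon(x)\triangle B_\epsilon(y))\le 2\pi\epsilon r$ reduces to the scalar claim $\arcsin u+u\sqrt{1-u^2}\le \pi u$. I would verify this by computing
\[
\frac{d}{du}\bigl(\pi u-\arcsin u-u\sqrt{1-u^2}\bigr)=\pi-2\sqrt{1-u^2}\ge \pi-2>0
\]
and observing that the quantity vanishes at $u=0$. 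This elementary calculus inequality is essentially the only nonroutine step, and I expect it to be the main (but modest) obstacle; everything else is bookkeeping with Cauchy--Schwarz and the explicit ball volumes.
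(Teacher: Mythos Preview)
Your proposal is correct and follows essentially the same route as the paper: apply Cauchy--Schwarz to reduce to the estimate $|\Dx f(x)-\Dx f(y)|\le \|f\|_{L^2(\ell)}\,\ell(B_\epsilon(x)\triangle B_\epsilon(y))^{1/2}/\ell(B_\epsilon(0))$, then bound the symmetric difference dimension by dimension using the explicit lens/cap formulas. The only difference is presentational: you package the geometric step as a single linear bound $\ell(B_\epsilon(x)\triangle B_\epsilon(y))\le K(d)\epsilon^{d-1}\|x-y\|$ and, in $d=2$, supply the explicit calculus verification (the derivative $\pi-2\sqrt{1-u^2}>0$) that the paper leaves to the reader; the paper instead writes out the piecewise expressions for $\ell(\triangle)^{1/2}/\ell(B_\epsilon)$ and asserts the resulting H\"older bound directly.
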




\begin{proposition}
\label{Lholderlemma}
\quad
\begin{enumerate}
\item If $f\in L^2(X,\mu)$ with $\int f\ d\mu=0$, then $\mathcal{L}_\epsilon f$ is H\"older and
$$|\mathcal{L}_\epsilon f(x)-\mathcal{L}_\epsilon f(y)|\le \|f\|_{L^2(\mu)}C(d)\cdot C'_H\cdot (1/\epsilon^{d+1})\|x-y\|^{1/2},\quad\mbox{ for all $x,y\in Y_\epsilon$,}$$
in dimensions $d=1,2,3$, where $0<C'_H<\infty$ is a constant depending on properties of $T$ and $h_\mu$.
\item If $f$ is bounded with $\int f\ d\mu=0$, then $\mathcal{L}_\epsilon f$ is Lipschitz
and
$$|\mathcal{L}_\epsilon f(x)-\mathcal{L}_\epsilon f(y)|\le \|f\|_{\infty}C_L(d)\cdot C'_L\cdot (1/\epsilon^2)\|x-y\|,\quad\mbox{ for all $x,y\in Y_\epsilon$,}$$
in dimensions $d=1,2,3$, where $0<C'_L<\infty$ is a constant depending on properties of $T$, $h_\mu$, and $M$.
    \end{enumerate}
    \end{proposition}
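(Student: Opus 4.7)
The plan is to write $\Le f = N/D$ with $N := \Pe(f\cdot h_\mu) = \Dy\circ\P\circ\Dx(f\cdot h_\mu)$ and $D := \Pe h_\mu = h_{\nu_\epsilon}$, and then exploit the quotient identity
\begin{equation*}
\Le f(x)-\Le f(y)=\frac{N(x)-N(y)}{D(x)}+\Le f(y)\cdot\frac{D(y)-D(x)}{D(x)}
\end{equation*}
to reduce the problem to three ingredients: (a) regularity in $y$ of $N$ and $D$, (b) a uniform (in $\epsilon$) positive lower bound on $D$, and (c) a pointwise upper bound on $|\Le f|$.

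For (a), since both $N$ and $D$ are outputs of the outer diffusion $\Dy$, I would apply Lemma \ref{holderlemma} for part~1 and Lemma \ref{lipschitzlemma} for part~2. In part~1, Lemma \ref{holderlemma} requires $L^2(\ell)$ bounds on $\P\Dx(fh_\mu)$ and $\P\Dx h_\mu$; these follow from the $L^2$-contraction of $\Dx$, the $L^2$-boundedness of $\P$ (by change of variables using the two-sided bound on $|\det DT|$), and the conversion $\|fh_\mu\|_{L^2(\ell)}\le\sqrt{\|h_\mu\|_\infty/h_-}\,\|f\|_{L^2(\mu)}$. This yields H\"older constants of order $\epsilon^{-(1+d)/2}$ for both $N$ and $D$. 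In part~2, the analogous $L^\infty$ bounds (via $\|fh_\mu\|_\infty\le\|h_\mu\|_\infty\|f\|_\infty$ and the $L^\infty$ bound on $\P$ from $\inf|\det DT|>0$) propagate to give Lipschitz constants of order $\epsilon^{-1}$ via Lemma \ref{lipschitzlemma}.

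For (b), $D=\Dy\P\Dx h_\mu$ inherits a positive lower bound (independent of $\epsilon$) from $h_\mu\ge h_->0$, since the composition of positive averaging operators with a diffeomorphism having bounded Jacobian cannot collapse a bounded-below density. For (c), I would use $|\Le f(y)|\le\|k_\epsilon\|_\infty\|f\|_{L^1(\mu)}\le\|k_\epsilon\|_\infty\|f\|_{L^2(\mu)}$ together with the estimate $\|k_\epsilon\|_\infty\le C\epsilon^{-d}$, obtained by bounding the numerator of $k_\epsilon$ in (\ref{withinvball}) by $\ell(B_\epsilon)\sim\epsilon^d$ and its denominator below by a constant multiple of $\ell(B_\epsilon)^2\sim\epsilon^{2d}$ (using the lower bound on $h_\mu$ and the bounded Jacobian of $T$). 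For bounded $f$ in part~2, the simpler $L^\infty$-contraction $\|\Le f\|_\infty\le C\|f\|_\infty$ will suffice.

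Combining these via the quotient identity, in part~1 the dominant contribution is the second term $\Le f(y)\cdot(D(y)-D(x))/D(x)$, which produces $\epsilon^{-d}\cdot\epsilon^{-1}=\epsilon^{-(d+1)}$ when paired with the Lipschitz estimate on $D$; then $\|x-y\|\le\diam(Y_\epsilon)^{1/2}\|x-y\|^{1/2}$ converts this to the H\"older-$1/2$ form matching the stated $\epsilon^{-(d+1)}$. A parallel combination in part~2 produces an $\epsilon^{-1}$ Lipschitz estimate, which is absorbed into the stated $\epsilon^{-2}$ for $\epsilon<1$. The main obstacle will be the careful bookkeeping of constants and $\epsilon$-powers through $\Dx,\P,\Dy$, and in particular verifying that the combination of Lipschitz-on-$D$ with the $\epsilon^{-d}$ bound on $\|k_\epsilon\|_\infty$ is the binding constraint, so that $C'_H$ and $C'_L$ absorb only constants depending on $T$, $h_\mu$, and $M$. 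The zero-mean hypothesis $\int f\,d\mu=0$ will not enter the pointwise estimates themselves but ensures the bound is applied to sub-dominant singular vectors of $\Le$, for which it is automatic.
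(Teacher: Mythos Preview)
Your overall strategy---the quotient decomposition $\Le f=N/D$, regularity of $N$ and $D$ via Lemmas~\ref{lipschitzlemma} and~\ref{holderlemma}, and a uniform lower bound on $D$---coincides with the paper's. The genuine difference is in the cross term. The paper uses the zero-mean hypothesis in an essential way: from $\int f\,d\mu=0$ it deduces $\int N\,d\ell=0$, and since $N$ is H\"older (resp.\ Lipschitz) this forces $\|N\|_\infty\le H_{1/2}(N)\,\diam(M)^{1/2}$ (resp.\ $L(N)\,\diam(M)$), of order $\epsilon^{-(d+1)/2}$ (resp.\ $\epsilon^{-1}$); this is then multiplied by $H_{1/2}(1/D)$ (resp.\ $L(1/D)$) of the same order to produce the $\epsilon^{-(d+1)}$ (resp.\ $\epsilon^{-2}$) in the statement. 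You instead bound $\|\Le f\|_\infty$ directly---by the kernel estimate $\|k_\epsilon\|_\infty\lesssim\epsilon^{-d}$ in part~1 and by the $L^\infty$-contraction $\|\Le f\|_\infty\le\|f\|_\infty$ in part~2---and pair this with the \emph{Lipschitz} regularity of $D$ (available because $h_\mu\in L^\infty$), converting to H\"older-$1/2$ only at the end in part~1. Your route is correct and, as you observe, does not need $\int f\,d\mu=0$; so your closing remark is accurate for your own argument but misidentifies the role that hypothesis plays in the paper's proof. A bonus of your approach is that in part~2 it actually delivers the sharper Lipschitz constant $O(\epsilon^{-1})$ rather than the stated $O(\epsilon^{-2})$; the paper's $\epsilon^{-2}$ arises precisely because it bounds $\|N\|_\infty$ via its Lipschitz constant instead of invoking the $L^\infty$-contraction of $\Le$.
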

The explicit form of $C'_H$ and $C'_L$ are given in the proofs in the appendix.
\begin{proposition}
\label{Lstarholderlemma}
If $g\in L^2(Y,\nu_\epsilon)$
then $\mathcal{L}^*_\epsilon g$ is H\"older and
$$|\mathcal{L}_\epsilon^* g(x)-\mathcal{L}_\epsilon^* g(y)|\le \|g\|_{L^2(\nu_\epsilon)}\frac{1}{\sqrt{A}}\frac{C(d)}{\epsilon^{(1+d)/2}}\|x-y\|^{1/2},\quad\mbox{ for all $x,y\in X_\epsilon$,}$$
in dimensions $d=1,2,3$, where $A=\min_{x\in X_\epsilon} |\det DT(x)|$.
\end{proposition}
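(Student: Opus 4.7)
The plan is to exploit the factorisation $\mathcal{L}_\epsilon^{\ast}=\Dx^{\ast}\circ\mathcal{K}\circ\Dy^{\ast}$ established in~(\ref{specdualLeqn}) and to apply the H\"older regularity of the averaging operator~$\Dx^{\ast}$ to the function $h:=\mathcal{K}\Dy^{\ast} g$. Because the kernel $\ax=\mathbf{1}_{B_\epsilon(0)}/\ell(B_\epsilon)$ is symmetric about $0$, $\Dx^{\ast}$ has exactly the same structure as~$\Dx$ (only its domain of integration is $X_\epsilon$ rather than $X$), and the proof of Lemma~\ref{holderlemma} applies verbatim to give
\[
|\mathcal{L}_\epsilon^{\ast}g(x)-\mathcal{L}_\epsilon^{\ast}g(y)|\le \|h\|_{L^2(X_\epsilon,\ell)}\,\frac{C(d)}{\epsilon^{(1+d)/2}}\,\|x-y\|^{1/2}.
\]
The proof thus reduces to showing that $\|h\|_{L^2(X_\epsilon,\ell)}\le\|g\|_{L^2(\nu_\epsilon)}/\sqrt{A}$.

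The factor $1/\sqrt{A}$ is produced by the change of variable $w=Tz$, using that $T$ is a diffeomorphism with $|\det DT(z)|\ge A$ on $X_\epsilon$:
\[
\|h\|_{L^2(X_\epsilon,\ell)}^{2}=\int_{X_\epsilon}|\Dy^{\ast}g(Tz)|^{2}\,dz=\int_{Y'_\epsilon}\frac{|\Dy^{\ast}g(w)|^{2}}{|\det DT(T^{-1}w)|}\,dw\le\frac{1}{A}\int_{Y'_\epsilon}|\Dy^{\ast}g(w)|^{2}\,dw.
\]
For the $L^2$-contraction of~$\Dy^{\ast}$, I would exploit that for $w\in Y'_\epsilon$ the measure $\ay(w-y)\,dy$ is a probability measure on~$Y_\epsilon$ (since $B_\epsilon(w)\subset Y_\epsilon$ by construction of~$Y_\epsilon$). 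Jensen's inequality then gives $|\Dy^{\ast}g(w)|^{2}\le \int\ay(w-y)|g(y)|^{2}\,dy$, and Fubini, combined with $\int_{Y'_\epsilon}\ay(w-y)\,dw\le 1$, delivers $\|\Dy^{\ast}g\|_{L^2(Y'_\epsilon,\ell)}\le\|g\|_{L^2(Y_\epsilon,\ell)}$.

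The main obstacle is the last step, converting the Lebesgue-norm bound $\|g\|_{L^2(\ell)}$ into the $\|g\|_{L^2(\nu_\epsilon)}$ that appears in the statement. Under the standing hypotheses of Section~5, $h_\mu$ and $|\det DT|$ are bounded uniformly above and below, so $h_{\nu_\epsilon}=\Dy\mathcal{P}\Dx h_\mu$ inherits two-sided bounds (the lower bound being of order $\min h_\mu/\max|\det DT|$ after tracking each of the three operators). Consequently $\|g\|_{L^2(\ell)}$ and $\|g\|_{L^2(\nu_\epsilon)}$ are equivalent with a constant depending only on these uniform bounds, a constant that is implicitly absorbed into the prefactor of the statement. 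Assembling the H\"older bound for $\Dx^{\ast}$, the Jacobian estimate, and the Jensen/Fubini contraction then yields the displayed estimate with exponent $1/2$ and the claimed dependence on $A$, $C(d)$, and $\epsilon$.
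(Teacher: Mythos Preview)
Your approach is essentially the same as the paper's: factor $\mathcal{L}_\epsilon^{\ast}=\Dx^{\ast}\circ\mathcal{K}\circ\Dy^{\ast}$, apply the H\"older estimate of Lemma~\ref{holderlemma} to $\Dx^{\ast}$ (using the symmetry of the kernel), and bound $\|\mathcal{K}\Dy^{\ast}g\|_{L^2(\ell)}\le\|g\|_{L^2(\ell)}/\sqrt{A}$. The paper handles the $\Dy^{\ast}$ contraction by invoking Lemma~\ref{Lbound} rather than Jensen/Fubini, but the two arguments are equivalent; and it packages the Jacobian step as Lemma~\ref{koopmanbound}, which is exactly your change-of-variable computation.

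Your observation about the $L^2(\ell)$ versus $L^2(\nu_\epsilon)$ norm is well taken. The paper's own proof in fact concludes with $\|g\|_{L^2(Y_\epsilon,\ell)}$ and does not carry out the conversion either, so the discrepancy you flag is present in the original as well. Under the standing hypotheses of Section~5 the two norms are indeed equivalent via two-sided bounds on $h_{\nu_\epsilon}$, so for the purposes of the corollaries (which only use the $\epsilon$-scaling) nothing is lost; but you are right that as written the displayed constant is not literally sharp with respect to the $L^2(\nu_\epsilon)$ norm.
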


As the random perturbations or noise of amplitude $\epsilon$ is increased, Propositions \ref{Lholderlemma} and \ref{Lstarholderlemma} show that the images of $L^2$ functions under $\Le$ and $\Le^*$ become more regular in a H\"older (or Lipschitz) sense.
This information can be used to imply similar regularity results for the left and right singular vectors of $\Le$.
As we anticipate that the optimal $\psi_{X_1,X_2}, \psi_{Y_1,Y_2}$ in the set-based problem (S) (section 3.1) will have coefficients of $O(1)$, we are interested in the minimum spatial distance in phase space that can be traversed by an $O(1)$ difference in value of the singular vectors of $\Le$.
Lower bounds on the $\epsilon$-scaling of these distances are the content of the following corollary.

\begin{corollary}
\label{Lwidthcor}
\quad
\begin{enumerate}
\item Let $f\in L^2(\mu)$ be a subdominant left singular vector of $\mathcal{L}_\epsilon$ (an eigenfunction of $\mathcal{L}^*_\epsilon\mathcal{L}_\epsilon$ corresponding to an eigenvalue less than 1), normalised so that $\|f\|_{L^2(\mu)}=1$.
     An ``$O(1)$ feature'' has width of at least order $\epsilon^{d+1}$.
\item Let $g\in L^2(\nu_\epsilon)$ be a subdominant right singular vector of $\mathcal{L}_\epsilon$ (an eigenfunction of $\mathcal{L}_\epsilon\mathcal{L}^*_\epsilon$ corresponding to an eigenvalue less than 1), normalised so that $\|g\|_{L^2(\nu_\epsilon)}=1$.
     An ``$O(1)$ feature'' has width of at least order $\epsilon^{(5+d)/2}$.
\end{enumerate}
\end{corollary}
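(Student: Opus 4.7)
The key observation behind the proof is that sub-dominant singular vectors are themselves fixed points (up to the factor $\sigma^2 < 1$) of $\mathcal{L}_\epsilon^* \mathcal{L}_\epsilon$ and $\mathcal{L}_\epsilon \mathcal{L}_\epsilon^*$ respectively, so they inherit the regularity of the range of these operator compositions, which has already been quantified in Propositions \ref{Lholderlemma} and \ref{Lstarholderlemma}. My plan is to write $f = \sigma^{-2} \mathcal{L}_\epsilon^*(\mathcal{L}_\epsilon f)$ for part (1) and $g = \sigma^{-2} \mathcal{L}_\epsilon(\mathcal{L}_\epsilon^* g)$ for part (2), and then feed the intermediate function into the appropriate regularity estimate, absorbing the harmless factor $\sigma^{-2}$ into the constants.

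For part (1), since $\|\mathcal{L}_\epsilon\| \le 1$ by Proposition \ref{Lsingvallemma}, the intermediate $\mathcal{L}_\epsilon f$ has $L^2(\nu_\epsilon)$-norm at most $1$, so Proposition \ref{Lstarholderlemma} applied with input $\mathcal{L}_\epsilon f$ immediately yields a global H\"older-$1/2$ bound on $f$ with constant of order $\epsilon^{-(1+d)/2}$. Requiring $|f(x)-f(y)| \gtrsim 1$ then forces $\|x - y\| \gtrsim \epsilon^{1+d}$.

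For part (2) the analogous one-shot application of Proposition \ref{Lholderlemma}(1) to $\mathcal{L}_\epsilon^* g$ would give a H\"older constant of order $\epsilon^{-(d+1)}$ and only the width scaling $\epsilon^{2(d+1)}$, which is weaker than claimed. The improvement to $\epsilon^{(5+d)/2}$ will come from a two-step bootstrap that swaps the H\"older estimate in the second step for the Lipschitz estimate of Proposition \ref{Lholderlemma}(2); this, however, requires a bounded input. Let $h := \mathcal{L}_\epsilon^* g$. First I would verify that $\langle h, \mathbf{1}\rangle_\mu = \langle g, \mathcal{L}_\epsilon \mathbf{1}\rangle_{\nu_\epsilon} = \langle g, \mathbf{1}\rangle_{\nu_\epsilon} = 0$, so $h$ has $\mu$-mean zero and $\|h\|_{L^2(\mu)} \le 1$. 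Proposition \ref{Lstarholderlemma} then provides an H\"older-$1/2$ constant of order $\epsilon^{-(1+d)/2}$ for $h$. The upgrade to $L^\infty$ uses the fact that $h_\mu$ is bounded below, so the mean-zero condition combined with continuity (inherited from the H\"older bound) forces $h$ to vanish at some $x^* \in X$; evaluating the H\"older estimate at $(x, x^*)$ bounds $\|h\|_\infty$ by the H\"older constant times $\mathrm{diam}(X)^{1/2}$, i.e.\ by order $\epsilon^{-(1+d)/2}$. Finally, applying the Lipschitz estimate of Proposition \ref{Lholderlemma}(2) to this bounded, $\mu$-mean-zero $h$ yields $|g(x)-g(y)| \lesssim \epsilon^{-2} \cdot \epsilon^{-(1+d)/2} \|x-y\| = \epsilon^{-(5+d)/2}\|x-y\|$, giving the claimed width.

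The only real subtlety is the $L^\infty$ bootstrap in part (2): translating the H\"older bound on $h$ into an $L^\infty$ bound with no loss of the $\epsilon^{-(1+d)/2}$ rate. This is where the hypotheses of the section (compactness of $X$ and $h_\mu$ bounded away from zero) are used. The asymmetry between the two parts of the corollary traces back to the asymmetry between Propositions \ref{Lholderlemma} and \ref{Lstarholderlemma}: a Lipschitz estimate is only available for $\mathcal{L}_\epsilon$ acting on a bounded input, which can therefore only be exploited in the ordering $\mathcal{L}_\epsilon \circ \mathcal{L}_\epsilon^*$ and not in $\mathcal{L}_\epsilon^* \circ \mathcal{L}_\epsilon$.
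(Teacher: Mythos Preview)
Your proposal is correct and follows essentially the same route as the paper: for part (1) you apply Proposition~\ref{Lstarholderlemma} to $\mathcal{L}_\epsilon f$ exactly as the paper does, and for part (2) your two-step bootstrap (H\"older bound on $h=\mathcal{L}_\epsilon^* g$ $\Rightarrow$ $L^\infty$ bound via mean-zero and continuity $\Rightarrow$ Lipschitz bound on $g$ via Proposition~\ref{Lholderlemma}(2)) is precisely the paper's argument. If anything, you are more explicit than the paper about the $\sigma^{-2}$ factor and about why the mean-zero condition yields the $L^\infty$ upgrade, which the paper compresses into the phrase ``using the fact that $\int g\, d\nu=0$.''
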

\begin{proof}
\quad
\begin{enumerate}
\item A subdominant normalised singular vector $f\in L^2(X,\mu)$ arises as an eigenvector of $\mathcal{L}^*_\epsilon\mathcal{L}_\epsilon$, and in particular $f=\mathcal{L}_\epsilon^* g$ for some $g\in L^2(Y,\nu_\epsilon)$ with $\int g\ d\nu_\epsilon=0$ and $\|g\|_{L^2(\nu_\epsilon)}=1$.
By Proposition \ref{Lstarholderlemma} we see that the H\"older constant of $f=\mathcal{L}_\epsilon^* g$ is $O(1/\epsilon^{(d+1)/2})$.
Thus, if along a given direction, $g$ increases from zero to $O(1)$ and decreases to zero again, the minimal distance required is $O((\epsilon^{(d+1)/2})^2)=O(\epsilon^{d+1})$.
In detail, if $1\le H_{1/2}(f)\cdot |x-y|^{1/2}$ then
\begin{equation*}
|x-y|\ge 1/H_{1/2}(f)^2=A/C(d)^2\epsilon^{d+1}.
\end{equation*}
\item
A subdominant normalised singular vector $g\in L^2(Y_\epsilon,\nu_\epsilon)$ arises as an eigenvector of $\mathcal{L}_\epsilon\mathcal{L}^*_\epsilon$.
We begin with a $g\in L^2(\nu_\epsilon)$ and apply Proposition \ref{Lstarholderlemma} to obtain $\|\mathcal{L}^*_\epsilon g\|_\infty\le \frac{C(d)}{\epsilon^{(1+d)/2}}\frac{1}{\sqrt{A}}\cdot\diam(M)^{1/2}$ using the fact that $\int g\ d\nu=0$.
We now apply Proposition \ref{Lholderlemma} (2) to obtain
\begin{equation*}
L(f)\le \frac{UB\diam(M)^{1/2}C(d)C_L(d)}{A^{3/2}L\epsilon^{(3+d)/2}}+\frac{U^2B^2\diam(M)^{3/2}C(d)C_L(d)^2}{A^{5/2}L^2 \epsilon^{(5+d)/2}}
\end{equation*}
Now, in order to have an O(1) feature we require $1\le L(f)\cdot |x-y|$ or $|x-y|\ge 1/L(f)$.
Since we have an upper bound for $L(f)$, the width of an $O(1)$ feature must be at least $O(\epsilon^{(5+d)/2})$.
\end{enumerate}
\end{proof}

\begin{remark}
Note that if we use only Proposition \ref{Lholderlemma} (1) in the proof of Corollary \ref{Lwidthcor} (2), we would obtain $O(\epsilon^{2d+2})$, which is worse than the estimate in the Corollary.
\end{remark}

\subsection{Scaling of $\sigma_{2,\epsilon}$ with $\epsilon$}

We now demonstrate a lower bound on $\sigma_{2,\epsilon}$, for small $\epsilon$, where $\sigma_{2,\epsilon}$ is the second singular value of $\mathcal{L}_\epsilon$.
A bound similar is spirit to Corollary \ref{corevalbound} was developed in the autonomous two-dimensional area-preserving setting \cite{jungemarsdenmezic}.
Related numerical studies of advection-diffusion PDEs include \cite{cerbellietal,anderson2012}. 
We begin by choosing some $\epsilon^*>0$ and a fixed partition $\{Y_{k,\epsilon^*}\}_{k=1,2}$ of $Y_{\epsilon^*}$.
The partition $\{Y_{k,\epsilon^*}\}_{k=1,2}$ induces compatible partitions for $Y_\epsilon$, $0\le \epsilon<\epsilon^*$, namely, $\{Y_{k,\epsilon}\}_{k=1,2}$, where $Y_{k,\epsilon}:=Y_{k,\epsilon^*}\cap Y_\epsilon$, the restriction of $Y_{k,\epsilon^*}$ to $Y_\epsilon$.
In what follows, it will be useful to consider the ``$\epsilon$-interior'' of a set $A$, denoted $\hat{A}:=\{x\in A: B_\epsilon(x)\subset A\}$.

\begin{lemma}
\label{lowerboundlemma}
Let $T:X\to Y_0$ be non-singular and suppose that $\mu$ is supported on $X$ and absolutely continuous.
One has
\begin{equation}
\label{sigmabound2}
\frac{\mu(\widehat{T^{-1}\hat{Y}_{1,\epsilon}})}{\mu(T^{-1}Y_{1,\epsilon})}+\frac{\mu(\widehat{T^{-1}\hat{Y}_{2,\epsilon}})}{\mu(T^{-1}Y_{2,\epsilon})}\le \frac{\langle \mathcal{L}_\epsilon\mathbf{1}_{X_{1,\epsilon}},\mathbf{1}_{Y_{1,\epsilon}}\rangle_{\nu_\epsilon}}{\mu(X_1)}+\frac{\langle \mathcal{L}_\epsilon\mathbf{1}_{X_{2,\epsilon}},\mathbf{1}_{Y_{2,\epsilon}}\rangle_{\nu_\epsilon}}{\mu(X_2)}\le 1+\sigma_{2,\epsilon},
\end{equation}
for all $0<\epsilon<\epsilon^*$.
\end{lemma}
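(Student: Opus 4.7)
The plan is to sandwich the middle expression in (\ref{sigmabound2}) between the stated bounds using Theorem \ref{boundlemma} on the right and a geometric ``full-capture'' argument on the left. First I would select a measurable partition $\{X_{1,\epsilon},X_{2,\epsilon}\}$ of $X$ satisfying the mass-matching constraint $\mu(X_{k,\epsilon})=\nu_\epsilon(Y_{k,\epsilon})$ for $k=1,2$. Such a partition exists because $\mu\ll\ell$, $\sum_k\nu_\epsilon(Y_{k,\epsilon})=\nu_\epsilon(Y_\epsilon)=1=\mu(X)$ by mass-preservation of $\Pe$, and measures absolutely continuous with respect to Lebesgue can be split to match any such decomposition. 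With this choice, the middle expression is an admissible value for the maximum in (\ref{boundeqn}), so the right inequality $\le 1+\sigma_{2,\epsilon}$ follows directly from Theorem \ref{boundlemma}.

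For the left inequality the key observation is purely geometric. If $x\in\widehat{T^{-1}\hat{Y}_{k,\epsilon}}$, then $B_\epsilon(x)\subset T^{-1}\hat{Y}_{k,\epsilon}$, so $T(B_\epsilon(x))\subset\hat{Y}_{k,\epsilon}$. Unpacking $\Pe=\Dy\circ\mathcal{P}\circ\Dx$ at such an $x$: the first $\epsilon$-smear spreads mass uniformly over $B_\epsilon(x)$; $\mathcal{P}$ sends this into $\hat{Y}_{k,\epsilon}$; and the final $\epsilon$-smear keeps the result inside $Y_{k,\epsilon}$, since every point of $\hat{Y}_{k,\epsilon}$ has an $\epsilon$-ball lying in $Y_{k,\epsilon}$. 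Hence $\Pe(\mathbf{1}_{\widehat{T^{-1}\hat{Y}_{k,\epsilon}}}\cdot h_\mu)$ is supported in $Y_{k,\epsilon}$, and using the identity $\Le f\cdot h_{\nu_\epsilon}=\Pe(f\cdot h_\mu)$ this translates to
\begin{equation*}
\langle \Le\mathbf{1}_{\widehat{T^{-1}\hat{Y}_{k,\epsilon}}},\mathbf{1}_{Y_{k,\epsilon}}\rangle_{\nu_\epsilon}=\mu\bigl(\widehat{T^{-1}\hat{Y}_{k,\epsilon}}\bigr).
\end{equation*}
Arranging the partition so that also $\widehat{T^{-1}\hat{Y}_{k,\epsilon}}\subset X_{k,\epsilon}\subset T^{-1}Y_{k,\epsilon}$, monotonicity of $f\mapsto\langle\Le f,\mathbf{1}_{Y_{k,\epsilon}}\rangle_{\nu_\epsilon}$ in nonnegative $f$ and monotonicity of the ratio in its denominator give
\begin{equation*}
\frac{\mu(\widehat{T^{-1}\hat{Y}_{k,\epsilon}})}{\mu(T^{-1}Y_{k,\epsilon})}\le \frac{\mu(\widehat{T^{-1}\hat{Y}_{k,\epsilon}})}{\mu(X_{k,\epsilon})}\le \frac{\langle \Le\mathbf{1}_{X_{k,\epsilon}},\mathbf{1}_{Y_{k,\epsilon}}\rangle_{\nu_\epsilon}}{\mu(X_{k,\epsilon})},
\end{equation*}
and summing over $k=1,2$ yields the left inequality.

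The main obstacle I expect is producing a single partition $\{X_{1,\epsilon},X_{2,\epsilon}\}$ that simultaneously (i) matches $\nu_\epsilon$-masses on the $Y$-side (needed for Theorem \ref{boundlemma}), (ii) contains the $\epsilon$-deep preimages $\widehat{T^{-1}\hat{Y}_{k,\epsilon}}$, and (iii) is contained in $T^{-1}Y_{k,\epsilon}$. The nested inclusions $\widehat{T^{-1}\hat{Y}_{k,\epsilon}}\subset T^{-1}Y_{k,\epsilon}$ are automatic, and the natural starting choice $X_{k,\epsilon}=T^{-1}(Y_{k,\epsilon})\cap X$ satisfies (ii) and (iii); property (i) should then be achievable by small measurable adjustments near the preimage of $\partial Y_{k,\epsilon}$, using nonsingularity of $T$ and the uniform bounds on $h_\mu$ to ensure these adjustments can be made without spoiling (ii) or (iii). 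Verifying the existence of such a compatible partition cleanly under the stated hypotheses is the one non-routine step.
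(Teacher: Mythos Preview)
Your geometric ``full-capture'' idea for the left inequality is correct and is exactly what the paper does, just phrased dually: the paper writes $\langle \Le\mathbf{1}_{X_k},\mathbf{1}_{Y_{k,\epsilon}}\rangle_{\nu_\epsilon}=\langle \mathbf{1}_{X_k},\Dx^*\mathcal{K}\Dy^*\mathbf{1}_{Y_{k,\epsilon}}\rangle_\mu$ and uses the pointwise bound $\mathcal{D}^*\mathbf{1}_W\ge\mathbf{1}_{\hat W}$ twice, arriving at $\mu(\widehat{T^{-1}\hat Y_{k,\epsilon}})$ in one line. Your support argument on $\Pe$ is the same computation seen from the other side.

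Where you diverge from the paper is precisely at the obstacle you flagged, and the fix you sketch does not work. If $\{X_{1,\epsilon},X_{2,\epsilon}\}$ partitions $X$ and you insist on (iii), i.e.\ $X_{k,\epsilon}\subset T^{-1}Y_{k,\epsilon}$ for \emph{both} $k$, then since $T^{-1}Y_{1,\epsilon}\cap X$ and $T^{-1}Y_{2,\epsilon}\cap X$ are themselves a disjoint partition of $X$, you are forced to $X_{k,\epsilon}=T^{-1}Y_{k,\epsilon}\cap X$ exactly; there is no room for ``small measurable adjustments''. Even if you weaken (iii) to the measure inequality $\mu(X_{k,\epsilon})\le\mu(T^{-1}Y_{k,\epsilon})$, combining with (i) and summing over $k$ gives $1=\sum_k\nu_\epsilon(Y_{k,\epsilon})\le\sum_k\mu(T^{-1}Y_{k,\epsilon})=1$, forcing equality $\nu_\epsilon(Y_{k,\epsilon})=\mu(T^{-1}Y_{k,\epsilon})$ for each $k$ --- an identity between the diffused pushforward $\nu_\epsilon$ and the purely deterministic pullback that has no reason to hold.

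The paper's resolution is simply not to attempt (i) at all. It takes $X_{k,\epsilon}:=T^{-1}Y_{k,\epsilon}\cap X$ (and observes this is independent of $\epsilon$, so $X_k=X_{k,\epsilon}$), which makes the denominator in the left inequality exact, $\mu(X_k)=\mu(T^{-1}Y_{k,\epsilon})$, and then invokes Theorem~\ref{boundlemma} for the right inequality as if it applied to \emph{any} partition, without checking the mass-matching constraint. So the step you identified as ``the one non-routine step'' is handled in the paper by an appeal to Theorem~\ref{boundlemma} without the constraint, rather than by constructing a compatible partition; your route of trying to satisfy (i)--(iii) simultaneously is a dead end.
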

\begin{proof}
First note that by Theorem \ref{boundlemma}, one has \begin{equation}
\label{sigmabound}
\sigma_2\ge\frac{\langle\mathcal{L}_\epsilon\mathbf{1}_{X_1},\mathbf{1}_{Y_{1,\epsilon}}\rangle_{\nu_\epsilon}}{\mu(X_1)}+\frac{\langle\mathcal{L}_\epsilon\mathbf{1}_{X_2},\mathbf{1}_{Y_{2,\epsilon}}\rangle_{\nu_\epsilon}}{\mu(X_2)}-1,
\end{equation}
for any partition $\{X_1,X_2\}$ of $X$ and $\{Y_{1,\epsilon},Y_{2,\epsilon}\}$ of $Y_\epsilon$.
To partition $X$ we choose $X_{k,\epsilon}:=T^{-1}Y_{k,\epsilon}\cap X$;  one may check that in fact $X_k:=X_{k,0}=X_{k,\epsilon}$ for all $0\le\epsilon\le\epsilon^*$ and that $\{X_k\}_{k=1,2}$ partitions $X$.

In what follows, we will use the following two inequalities:
For $W\subset Y_\epsilon$,
\begin{eqnarray*}
\Dy^*\mathbf{1}_{W}(x)&=&(1/\ell(B_\epsilon(0))\int_{Y_\epsilon} \mathbf{1}_{B_\epsilon(x)}(y)\mathbf{1}_{W}(y)\ dy\\
&=&\frac{\ell(Y_\epsilon\cap B_\epsilon(x)\cap W)}{\ell(B_\epsilon(x))}\\
&\ge& \mathbf{1}_{\hat{W}}(x),
\end{eqnarray*}
and for $V\subset X_\epsilon$,
\begin{eqnarray*}
\Dx^*\mathbf{1}_{V}(x)&=&(1/\ell(B_\epsilon(0))\int_{X_\epsilon} \mathbf{1}_{B_\epsilon(x)}(y)\mathbf{1}_{V}(y)\ dy\\
&=&\frac{\ell(X_\epsilon\cap B_\epsilon(x)\cap V)}{\ell(B_\epsilon(x))}\\
&\ge& \mathbf{1}_{\hat{V}}(x).
\end{eqnarray*}

Now, for $k=1$ (and identically for $k=2$) and $0<\epsilon\le\epsilon^*$ we have
\begin{eqnarray*}
\langle \mathcal{L}_\epsilon\mathbf{1}_{X_{1,\epsilon}},\mathbf{1}_{Y_{1,\epsilon}}\rangle_{\nu_\epsilon}&=&
\langle \mathcal{L}_\epsilon\mathbf{1}_{T^{-1}Y_{1,\epsilon}\cap X},\mathbf{1}_{Y_{1,\epsilon}}\rangle_{\nu_\epsilon}\\
&=&\langle \mathbf{1}_{T^{-1}Y_{1,\epsilon}}\cdot\mathbf{1}_{X},\mathcal{L}_\epsilon^*\mathbf{1}_{Y_{1,\epsilon}}\rangle_{\mu}\\
&=&\langle \mathbf{1}_{T^{-1}Y_{1,\epsilon}},\Dx^*\mathcal{K}\Dy^*\mathbf{1}_{Y_{1,\epsilon}}\rangle_{\mu}\\
&\ge&\langle \mathbf{1}_{T^{-1}Y_{1,\epsilon}},\Dx^*\mathcal{K}\mathbf{1}_{\hat{Y}_{1,\epsilon}}\rangle_{\mu}\\
&=&\langle \mathbf{1}_{T^{-1}Y_{1,\epsilon}},\Dx^*\mathbf{1}_{T^{-1}\hat{Y}_{1,\epsilon}}\rangle_{\mu}\quad\mbox{noting $T^{-1}\hat{Y}_{1,\epsilon}\subset X_\epsilon$} \\
&\ge&\langle \mathbf{1}_{T^{-1}Y_{1,\epsilon}},\mathbf{1}_{\widehat{T^{-1}\hat{Y}_{1,\epsilon}}}\rangle_{\mu}\\
&=&\mu(\widehat{T^{-1}\hat{Y}_{1,\epsilon}}).
\end{eqnarray*}
Since $\mu(X_1)\le \mu(T^{-1}Y_{1,\epsilon})$ we have
\begin{equation*}
\frac{\langle\mathcal{L}_\epsilon\mathbf{1}_{X_1},\mathbf{1}_{Y_{1,\epsilon}}\rangle_{\nu_\epsilon}}{\mu(X_1)}\ge
\frac{\mu(\widehat{T^{-1}\hat{Y}_{1,\epsilon}})}{\mu(T^{-1}Y_{1,\epsilon})},
\end{equation*}
and similarly for $X_2, Y_{2,\epsilon}$, and the result follows.
\end{proof}


\begin{lemma}
\label{hatlowerbound}
 Using the notation of Lemma \ref{lowerboundlemma}, suppose that $Y_0$ is a smooth manifold with smooth boundary, $T^{-1}$ is smooth, and $\mu$ is absolutely continuous with density bounded above and below. Given some $\epsilon^*>0$ one may choose $Y_{1,\epsilon^*},Y_{2,\epsilon^*}$ so that there exists $0<C<\infty$ with
\begin{equation}
\label{sigmabound4}
2-C\epsilon\le \frac{\mu(\widehat{T^{-1}\hat{Y}_{1,\epsilon}})}{\mu(T^{-1}Y_{1,\epsilon})}+\frac{\mu(\widehat{T^{-1}\hat{Y}_{2,\epsilon}})}{\mu(T^{-1}Y_{2,\epsilon})}
\end{equation}
for all $0<\epsilon<\epsilon^*$.
\end{lemma}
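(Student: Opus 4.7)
The plan is to show each of the two ratios on the left-hand side of (\ref{sigmabound4}) satisfies $\mu(\widehat{T^{-1}\hat{Y}_{k,\epsilon}})/\mu(T^{-1}Y_{k,\epsilon}) \ge 1 - C_k \epsilon$, which upon summing gives the claim with $C = C_1+C_2$. Since $\widehat{T^{-1}\hat{Y}_{k,\epsilon}} \subset T^{-1}Y_{k,\epsilon}$, it suffices to establish the ``collar estimate''
\begin{equation*}
\mu\bigl(T^{-1}Y_{k,\epsilon} \setminus \widehat{T^{-1}\hat{Y}_{k,\epsilon}}\bigr) \le C_k'\, \epsilon,
\end{equation*}
together with a uniform lower bound $\mu(T^{-1}Y_{k,\epsilon}) \ge c > 0$ for $0 < \epsilon < \epsilon^*$. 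The latter is arranged by choosing $Y_{1,\epsilon^*}, Y_{2,\epsilon^*}$ at the outset so that $Y_{k,\epsilon^*} \cap Y_0$ has positive Lebesgue measure; then $T^{-1}Y_{k,\epsilon} \supset T^{-1}(Y_{k,\epsilon^*} \cap Y_0) \cap X$ for all $\epsilon < \epsilon^*$, and the bounded density of $\mu$ gives the lower bound.

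For the collar estimate, I would specifically choose $Y_{1,\epsilon^*}, Y_{2,\epsilon^*}$ so that their common interface inside $Y_0$ is a smooth codimension-one submanifold meeting $\partial Y_0$ transversally; together with the smoothness of $\partial Y_0$, this makes $\partial Y_{k,\epsilon^*}$ piecewise smooth with uniformly bounded $(d-1)$-dimensional Hausdorff measure, and the same holds for $\partial Y_{k,\epsilon} = \partial(Y_{k,\epsilon^*}\cap Y_\epsilon)$ uniformly in $\epsilon \in (0,\epsilon^*)$. Decompose
\begin{equation*}
T^{-1}Y_{k,\epsilon} \setminus \widehat{T^{-1}\hat{Y}_{k,\epsilon}}
 = T^{-1}(Y_{k,\epsilon}\setminus \hat{Y}_{k,\epsilon}) \;\cup\; \bigl(T^{-1}\hat{Y}_{k,\epsilon} \setminus \widehat{T^{-1}\hat{Y}_{k,\epsilon}}\bigr).
\end{equation*}
The first piece is contained in the $\epsilon$-tubular neighbourhood of $\partial Y_{k,\epsilon}$, which by the standard tubular-neighbourhood volume estimate has Lebesgue measure at most $C_d\, \mathcal{H}^{d-1}(\partial Y_{k,\epsilon})\,\epsilon = O(\epsilon)$; pulling back by $T^{-1}$ (Lipschitz with bounded Jacobian, since $T$ is a diffeomorphism on a compact set) multiplies this by at most $\|\det DT^{-1}\|_\infty$, preserving the $O(\epsilon)$ bound.

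The second piece is the $\epsilon$-collar of $\partial(T^{-1}\hat{Y}_{k,\epsilon}) = T^{-1}(\partial \hat{Y}_{k,\epsilon})$. Here the key geometric fact I need is that the surface area of the boundary of the $\epsilon$-erosion $\hat{Y}_{k,\epsilon}$ is uniformly bounded in $\epsilon$ for $\epsilon$ small enough --- this follows because, in a neighbourhood of a smooth piece of $\partial Y_{k,\epsilon}$, the erosion boundary is a parallel hypersurface whose area differs from $\mathcal{H}^{d-1}(\partial Y_{k,\epsilon})$ by $O(\epsilon)$ (via the Weyl tube / mean curvature expansion), while curvature singularities of the corners are smoothed by the erosion. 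Applying the Lipschitz bound on $T^{-1}$ gives $\mathcal{H}^{d-1}(\partial(T^{-1}\hat{Y}_{k,\epsilon})) = O(1)$, and the tubular-neighbourhood estimate again yields Lebesgue measure $O(\epsilon)$ for the second piece. Converting Lebesgue to $\mu$ via the upper bound on $h_\mu$ and dividing by the uniform lower bound on $\mu(T^{-1}Y_{k,\epsilon})$ completes the argument.

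The hard part is the uniform bound on $\mathcal{H}^{d-1}(\partial \hat{Y}_{k,\epsilon})$ as $\epsilon \to 0$, especially near corners where the smooth interface inside $Y_0$ meets $\partial Y_0$ (or $\partial Y_\epsilon$); this is where one must either invoke a quantitative tube/erosion theorem for sets with piecewise smooth boundary, or arrange the initial choice of $Y_{k,\epsilon^*}$ (e.g.\ so that the interface meets $\partial Y_0$ orthogonally) so that the corner contribution to the eroded surface area is controlled. The rest of the argument is routine estimation of $\epsilon$-tubular neighbourhoods together with the bounded distortion of $T$.
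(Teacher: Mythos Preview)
Your proposal is correct and follows essentially the same approach as the paper: choose a partition $\{Y_{k,\epsilon^*}\}$ with smooth interface, then show each ratio is at least $1-C_k\epsilon$ via a collar/tubular-neighbourhood estimate. The paper's own proof is in fact only a sketch---it specifies the choice of $Y_{k,\epsilon^*}$ (simply connected, smooth boundary, with $\mu(T^{-1}Y_{k,\epsilon^*})=1/2$) and then simply asserts that $\mu(\widehat{T^{-1}\hat{Y}_{k,\epsilon}})/\mu(T^{-1}Y_{k,\epsilon})\ge 1-C\epsilon$ holds, with $C$ depending on $\mu$, $T$, and the boundary curvature of the $Y_{k,\epsilon^*}$. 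Your write-up supplies the details the paper omits: the decomposition of the collar into the ``outer'' and ``inner'' erosion contributions, the tubular-neighbourhood volume bound, and the bounded distortion of $T^{-1}$. The one substantive point you identify as ``hard''---the uniform control of $\mathcal{H}^{d-1}(\partial \hat{Y}_{k,\epsilon})$ near corners---is precisely what the paper's phrase ``$C$ depends on \ldots\ the curvature of the boundaries'' is gesturing at without proof; your suggestion to arrange orthogonal incidence of the interface with $\partial Y_0$ is a reasonable way to make this rigorous. A minor difference is that the paper imposes $\mu(T^{-1}Y_{k,\epsilon^*})=1/2$, which is not needed for the lemma as stated (your positive-measure condition suffices) but matters for optimising the constant $C$, as the subsequent Remark in the paper makes clear.
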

\begin{proof}
In order to achieve a tight lower bound for $\sigma_{2,\epsilon}$, we should choose $Y_{1,\epsilon^*}, Y_{2,\epsilon^*}$ judiciously.
Under the hypotheses, $Y_{\epsilon^*}$ has smooth boundary.
Choose a partition $\{Y_{k,\epsilon^*}\}_{k=1,2}$ of two simply connected sets with non-empty interior so that each element has smooth boundary, and so that $\mu(T^{-1}Y_{k,\epsilon^*})=1/2$.
Now for all $0<\epsilon<\epsilon^*$, for small enough $\epsilon$, for each $k=1,2$, one has $\mu(\widehat{T^{-1}\hat{Y}_{k,\epsilon}})/\mu(T^{-1}Y_{k,\epsilon})\ge 1-C\epsilon$, where the constant $C$ depends on $\mu$, $T$, and the curvature of the boundaries of the $Y_{k,\epsilon^*}$.
\end{proof}
\begin{remark}
Note that in order to optimise (minimise) the constant in the $C$ above, one chooses $Y_{k,\epsilon^*}$, $k=1,2$ so that the co-dimension 1 volume of the shared boundary of $Y_{1,\epsilon^*}$ and $Y_{2,\epsilon^*}$ is small \emph{and} the co-dimension 1 volume of the shared boundary of $T^{-1}Y_{1,\epsilon^*}$ and $T^{-1}Y_{2,\epsilon^*}$ is small.
We see here the core of the reason for the ``double'' diffusion (pre- and post- $T$-dynamics) in the definition of $\mathcal{L}_\epsilon$.
If we defined $\mathcal{L}_\epsilon$ as $\Dy\mathcal{P}$ (resp.\ $\mathcal{P}\Dx$), then we would choose $Y_{k,\epsilon^*}$ with small shared boundary (resp.\ choose $T^{-1}Y_{k,\epsilon^*}$ with small shared boundary), and not care about the boundaries of $T^{-1}Y_{k,\epsilon^*}$ (resp.\ $Y_{k,\epsilon^*}$).
By defining $\mathcal{L}_\epsilon=\Dy\mathcal{P}\Dx$ we require small shared boundaries for \emph{both} initial and final time partitions, and guide the 2nd singular vectors (and the subsequently constructed coherent sets) toward having smooth boundaries at initial \emph{and} final times.
\end{remark}

\begin{corollary}
\label{corevalbound}
In the setting of Lemma \ref{lowerboundlemma} and under the hypotheses of Lemma \ref{hatlowerbound}, one has $1-\sigma_{2,\epsilon}\le C\epsilon$ for $0<\epsilon<\epsilon^*$.
\end{corollary}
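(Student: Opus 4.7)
The plan is to obtain the corollary as an immediate consequence of chaining the two preceding lemmas, so essentially no new machinery is needed. First I would invoke Lemma \ref{hatlowerbound}: given the hypotheses on $Y_0$, $T$, and $\mu$, fix the partition $\{Y_{1,\epsilon^*}, Y_{2,\epsilon^*}\}$ of $Y_{\epsilon^*}$ supplied by that lemma (two simply connected pieces with smooth shared boundary whose preimages under $T^{-1}$ also have smooth boundary and each carry $\mu$-mass $1/2$). This fixes the induced family $\{Y_{1,\epsilon}, Y_{2,\epsilon}\}$ for all $0<\epsilon<\epsilon^*$ and guarantees the constant $C<\infty$ with
\begin{equation*}
2-C\epsilon \;\le\; \frac{\mu(\widehat{T^{-1}\hat{Y}_{1,\epsilon}})}{\mu(T^{-1}Y_{1,\epsilon})}+\frac{\mu(\widehat{T^{-1}\hat{Y}_{2,\epsilon}})}{\mu(T^{-1}Y_{2,\epsilon})}.
\end{equation*}

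Next I would apply Lemma \ref{lowerboundlemma} with this same partition, using the induced partition $\{X_{1,\epsilon}, X_{2,\epsilon}\} = \{T^{-1}Y_{1,\epsilon}\cap X, T^{-1}Y_{2,\epsilon}\cap X\}$ of $X$ built there. The lemma yields the chain of inequalities
\begin{equation*}
\frac{\mu(\widehat{T^{-1}\hat{Y}_{1,\epsilon}})}{\mu(T^{-1}Y_{1,\epsilon})}+\frac{\mu(\widehat{T^{-1}\hat{Y}_{2,\epsilon}})}{\mu(T^{-1}Y_{2,\epsilon})} \;\le\; \frac{\langle \mathcal{L}_\epsilon\mathbf{1}_{X_{1,\epsilon}},\mathbf{1}_{Y_{1,\epsilon}}\rangle_{\nu_\epsilon}}{\mu(X_1)}+\frac{\langle \mathcal{L}_\epsilon\mathbf{1}_{X_{2,\epsilon}},\mathbf{1}_{Y_{2,\epsilon}}\rangle_{\nu_\epsilon}}{\mu(X_2)} \;\le\; 1+\sigma_{2,\epsilon}.
\end{equation*}

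Concatenating the two bounds gives $2 - C\epsilon \le 1 + \sigma_{2,\epsilon}$, and rearranging yields $1-\sigma_{2,\epsilon} \le C\epsilon$ for all $0<\epsilon<\epsilon^*$, which is the claim. The only thing worth checking is compatibility of hypotheses: Lemma \ref{lowerboundlemma} asks for $T$ non-singular with $\mu$ absolutely continuous, while Lemma \ref{hatlowerbound} strengthens this to $T^{-1}$ smooth, $Y_0$ a smooth manifold with smooth boundary, and $\mu$ having density bounded above and below, all of which are assumed in the corollary's setting. Since both lemmas do the real work, the only ``obstacle'' is cosmetic — making sure the same partition $\{Y_{k,\epsilon^*}\}$ is used in both applications so that the upper and lower estimates refer to identical quantities and can legitimately be concatenated.
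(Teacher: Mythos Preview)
Your proposal is correct and is exactly the intended argument: the paper states the corollary without proof because it follows immediately by concatenating the inequality of Lemma~\ref{hatlowerbound} with that of Lemma~\ref{lowerboundlemma}, precisely as you describe.
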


\section{Numerical Example}
\label{numericalsect}
We now numerically investigate the constructions of the previous sections.
Our numerical example is a quasi-periodically forced flow system representing an idealized stratospheric flow in the northern or southern hemisphere (see \cite{rypina_etal_07}), defined by
\begin{eqnarray*}
\frac{dx}{dt}&=&-\frac{\partial \Psi}{\partial y} \\
\frac{dy}{dt}&=&\frac{\partial \Psi}{\partial x}
\end{eqnarray*}
with streamfunction
\begin{eqnarray*}
\Psi(x,y,t) &=& c_3y -U_0 L \tanh(y/L)+A_3 U_0 L\sech^2(y/L)\cos(k_1 x) \\
&+& A_2 U_0 L\sech^2(y/L)\cos(k_2 x-s_2 t)+ A_1 U_0 L \sech^2(y/L)\cos(k_1 x -s_1 t).
\end{eqnarray*}

We use the parameter values as in \cite{froyland_padberg_12}, i.e.\  $U_0=5.41$, $A_1=0.075$, $A_2=0.4$, $A_3=0.2$, $L=1.770$,  $c_2/U_0=0.205$,$c_3/U_0=0.7$, $k_1=2/r_e$, $k_2=4/r_e$, $k_3=6/r_e$ where $r_e=6.371$ as well as
$s_2=k_2(c_2-c_3)$, $s_1=s_2(1+\sqrt{5})/2$, where we have dropped the physical units for brevity.
We seek coherent sets for the finite-time duration from $t=10$ to $t=20$.
As the flow is area-preserving, we set $h_\mu$ to be constant.
Rypina \emph{et al.} \cite{rypina_etal_07}
show that there is a time-varying jet core oscillating
in a band around $y=0$. The parameters studied
in \cite{rypina_etal_07} were chosen so that the jet core formed a complete
transport barrier between the two Rossby wave regimes
above and below it. In \cite{FSM10} some of the parameters were modified to
remove the jet core band, allowing a small amount of transport between the
two Rossby wave regimes.
Thus we expect that there are two coherent sets;  one above the removed jet core, and one below it.
We demonstrate that we can numerically find these coherent sets, and illustrate the effect of diffusion.

In order to carry out a numerical investigation we require a finite-rank approximation of $\mathcal{L}_\epsilon$.
To construct such an approximation, we use the numerical method of \cite{FSM10}, whereby the domains $X$ and $Y_\epsilon$  are partitioned into boxes and an estimate of $\mathcal{P}_\epsilon$, and then $\mathcal{L}_\epsilon$ is obtained.
We carry out two experiments at differing $\epsilon$ values.
\subsection{Pure advection with implicit numerical diffusion}
Firstly, we directly use the technique from \cite{FSM10}.
We partition $[0,20]\times[-2.5,2.5]$ into $2^{15}$ boxes, leading to boxes of radius $0.0391\times 0.0195$.
We obtain finite rank estimates of $\mathcal{P}$ and $\mathcal{L}_0$, which are $32768\times 39465$ sparse non-negative matrices $P$ and $L$, respectively;  these matrices was produced using 400 test points per box (see \cite{FSM10} for details).
The leading singular value of $L$ is 1, and the second singular value is $\sigma_2=0.9969$.
The discretisation procedure used to estimate $\mathcal{L}_0$ leads to a ``numerical diffusion'' so that one effectively estimates $\mathcal{L}_\epsilon$ with $\epsilon\lessapprox 0.0391$;  this is the reason for the spectral gap appearing in the numerical estimate of $\mathcal{L}_0$.
An estimate of $h_{\nu_\epsilon}$, produced as $\mathbf{1}P$, is shown in Figure 1 (left).
\begin{figure}
  \label{pushforward0}
  \hspace*{-1.5cm}
  \includegraphics[width=10cm]{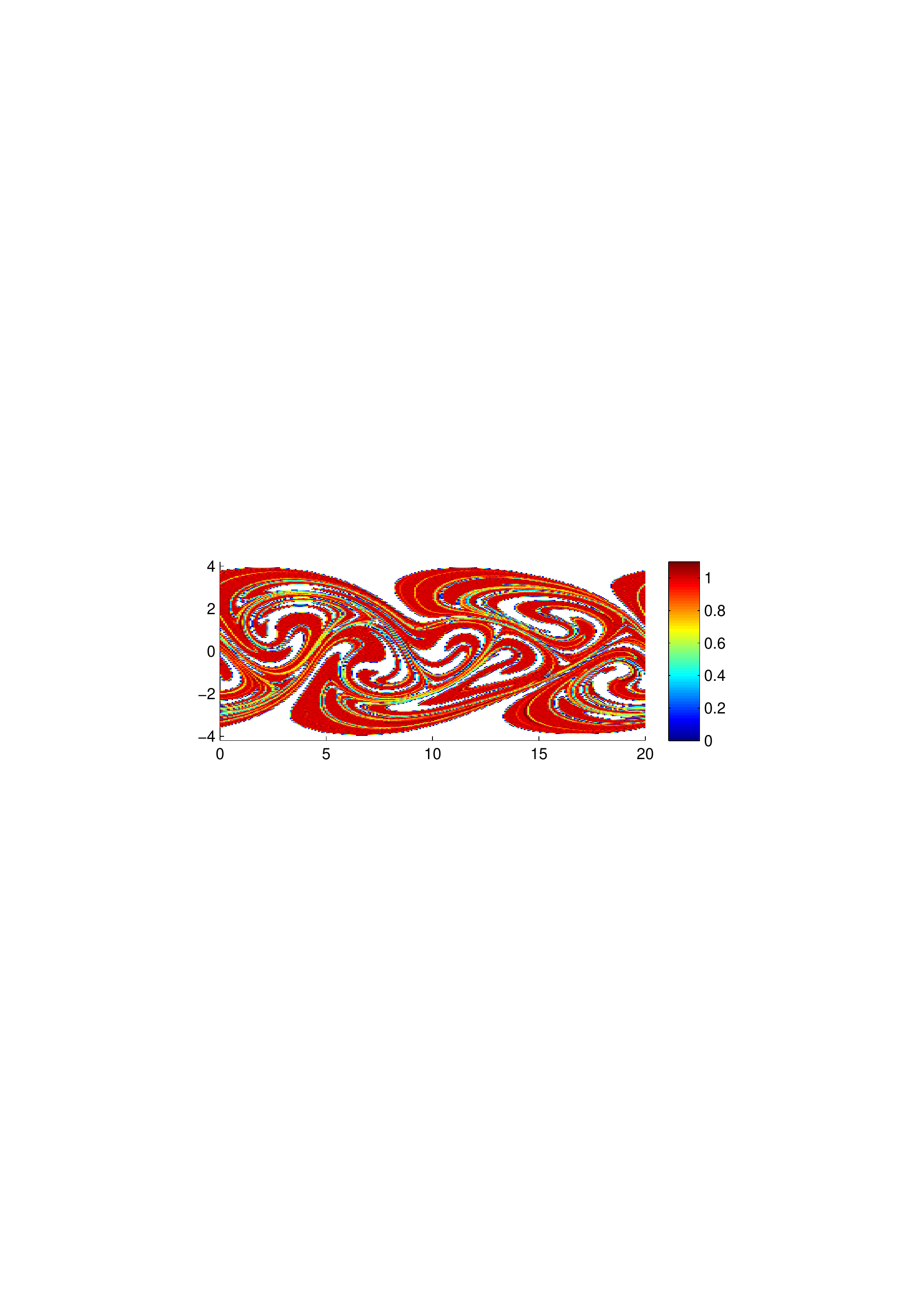}
  \includegraphics[width=10cm]{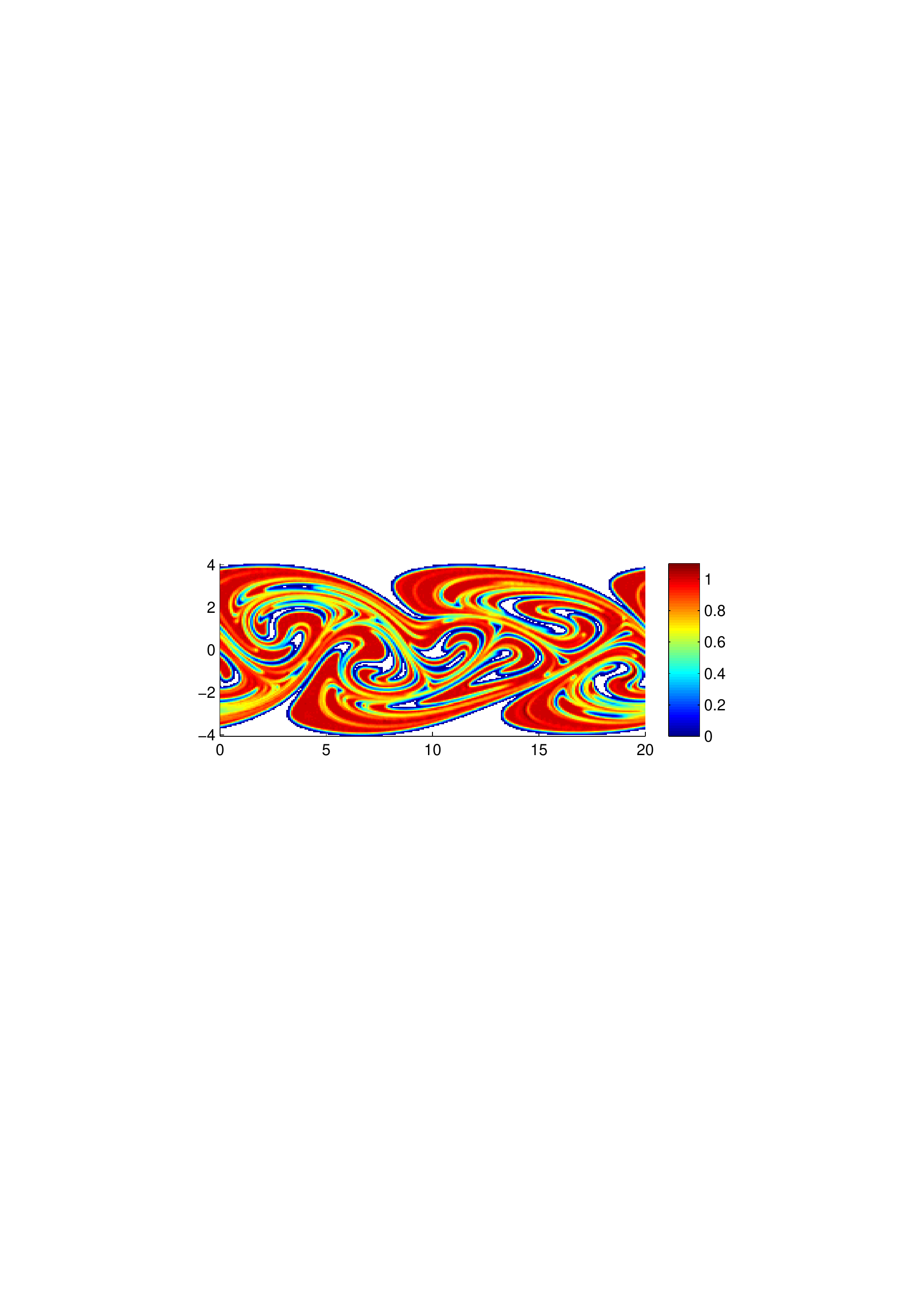}\\
  \caption{Left: Density of pushforward of Lebesgue measure on $[0,20]\times [-2.5,2.5]$ by a discrete form of $\mathcal{P}_\epsilon$, $\epsilon\approx 0.0391$ (a discrete approximation of $h_{\nu_\epsilon}$).  Right:  As for Left, with $\epsilon=0.1$.}
\end{figure}
The second left (resp.\ right) singular vector is shown in the left image of Figure 2 (resp.\ Figure 3).
Note that the value of the singular vectors in Figures 2 and 3 is predominantly in the vicinity of $\pm 1$, and that there is a clear separation into two coherent sets, consistent with the known facts about transport in this system.
\begin{figure}
  \label{leftsingvec}
  \hspace*{-1.5cm}
  \includegraphics[width=10cm]{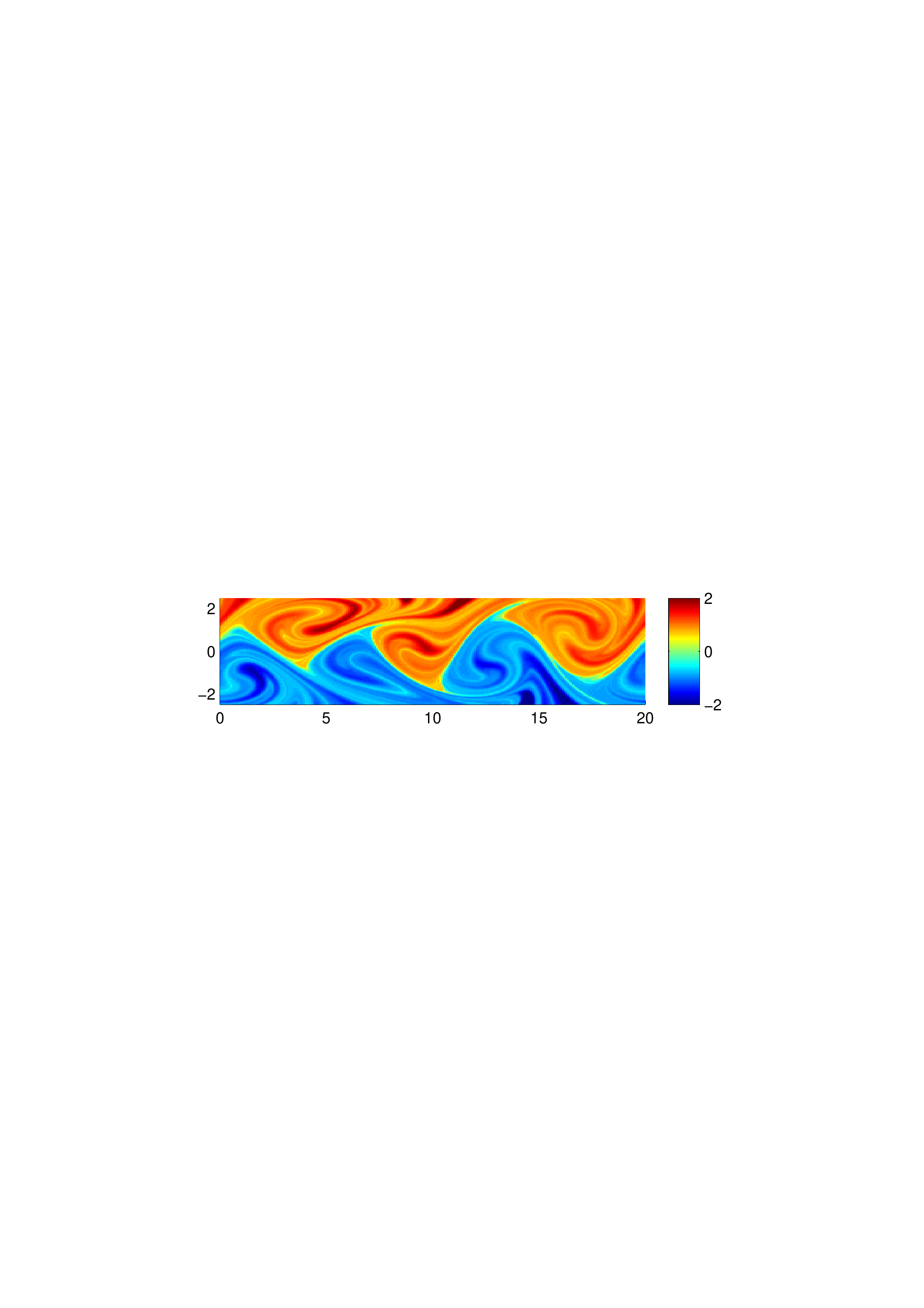}
  \includegraphics[width=10cm]{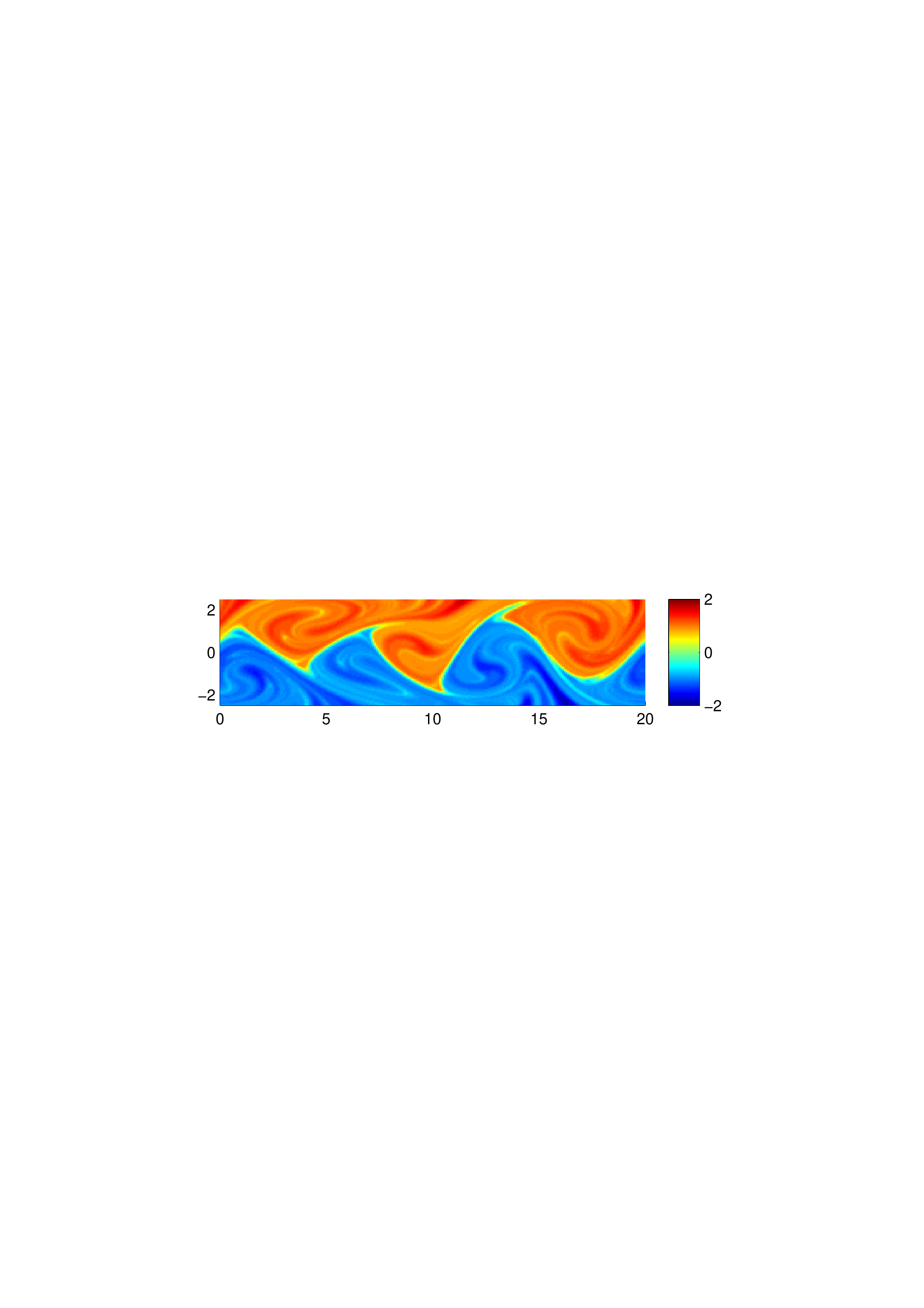}\\
  \caption{Left: Discrete approximation of the second left singular vector of $\mathcal{L}_\epsilon$, $\epsilon\approx 0.0391$.  Right:  As for Left, with $\epsilon=0.1$.}
\end{figure}
\begin{figure}
 \label{rightsingvec} 
     \hspace*{-1.5cm}
    \includegraphics[width=10cm]{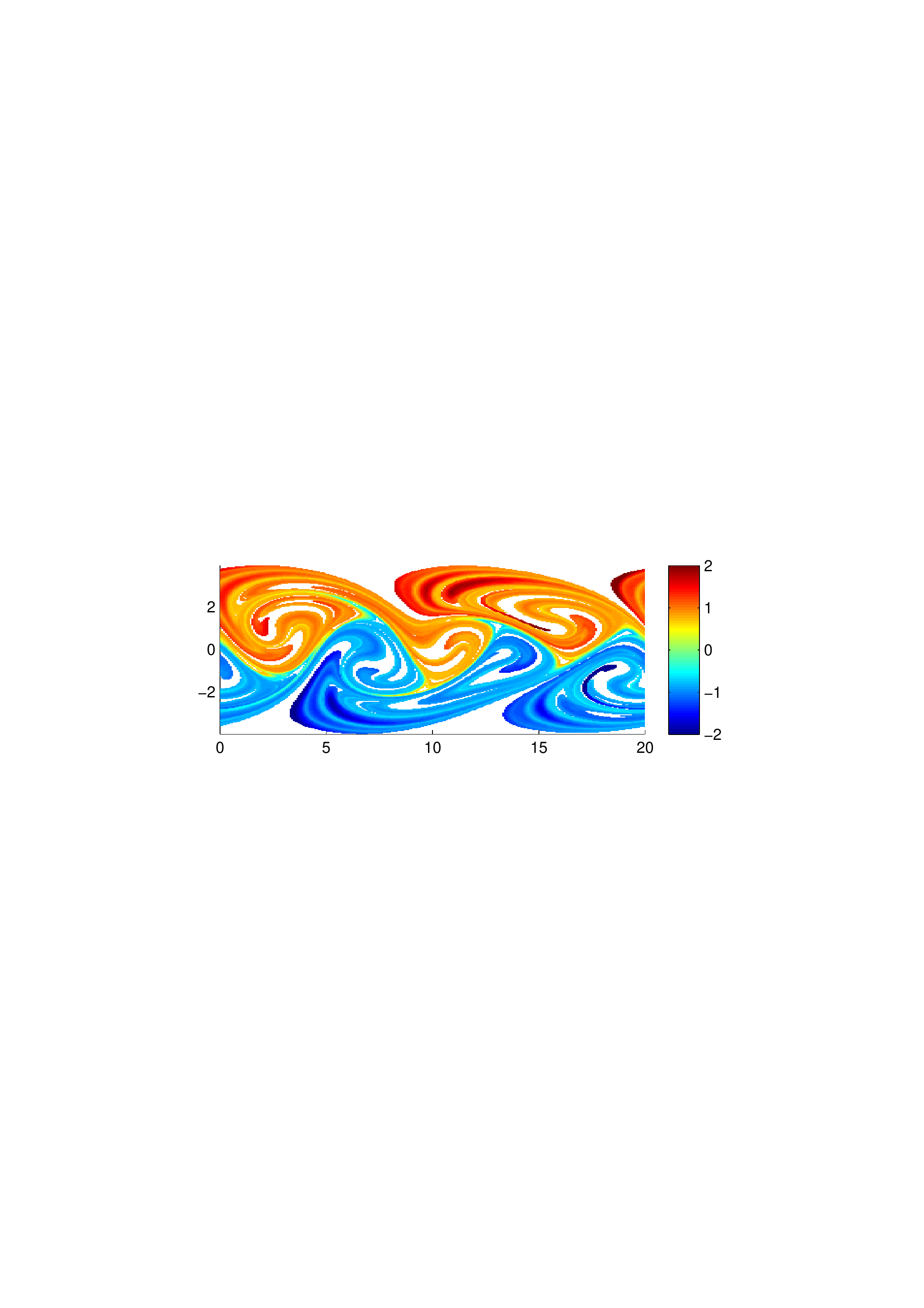}
  \includegraphics[width=10cm]{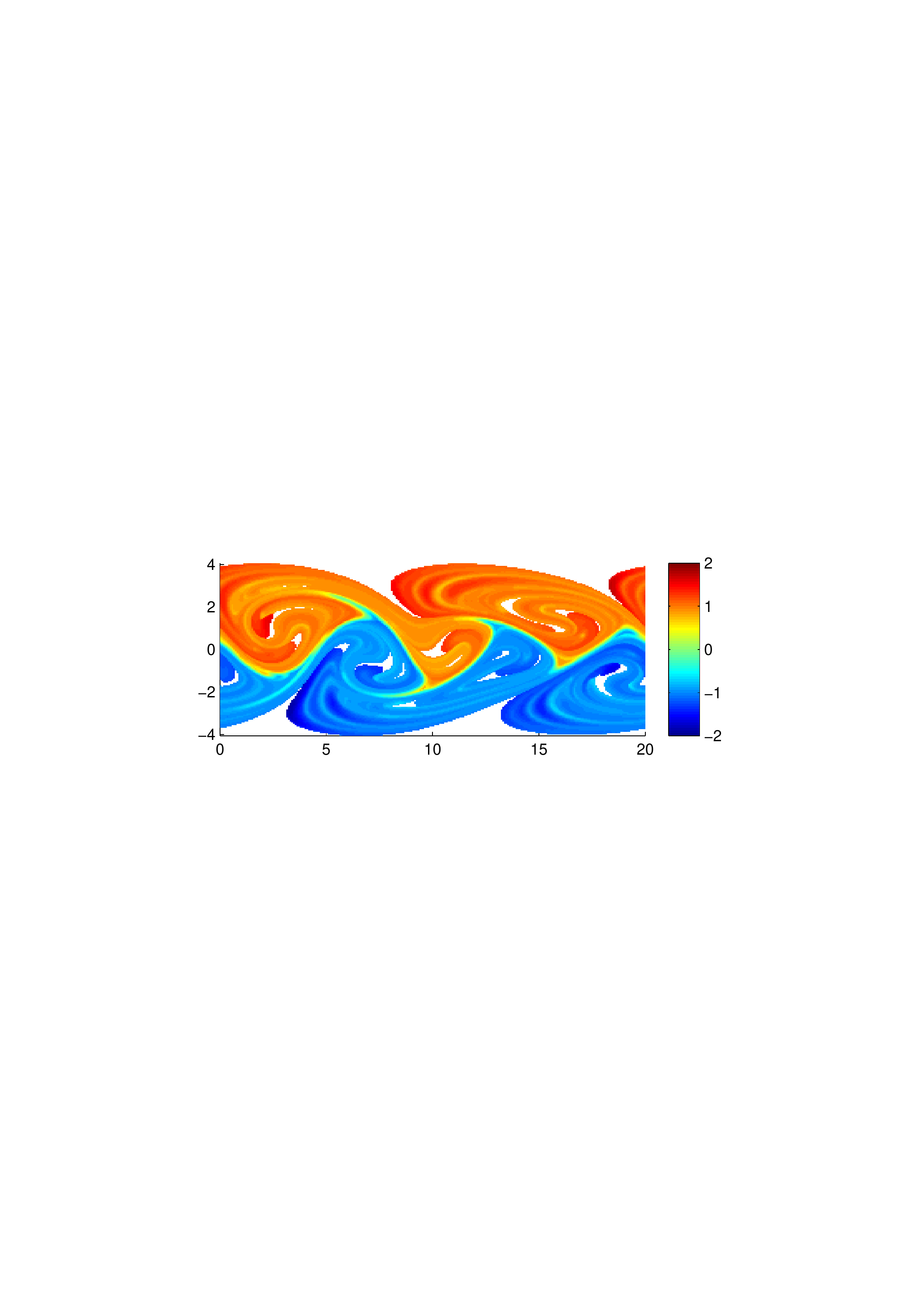}\\
   \caption{Left: Discrete approximation of the second right singular vector of $\mathcal{L}_\epsilon$, $\epsilon\approx 0.0391$.  Right:  As for Left, with $\epsilon=0.1$.}
\end{figure}
An optimal level set thresholding of the second left (resp.\ right) singular vector is shown in the left image of Figure 4 (resp.\ Figure 5);  see Remark \ref{remarkN} for details.
\begin{figure}
    \label{leftthresh}
  \hspace*{-1.5cm}  \includegraphics[width=10cm]{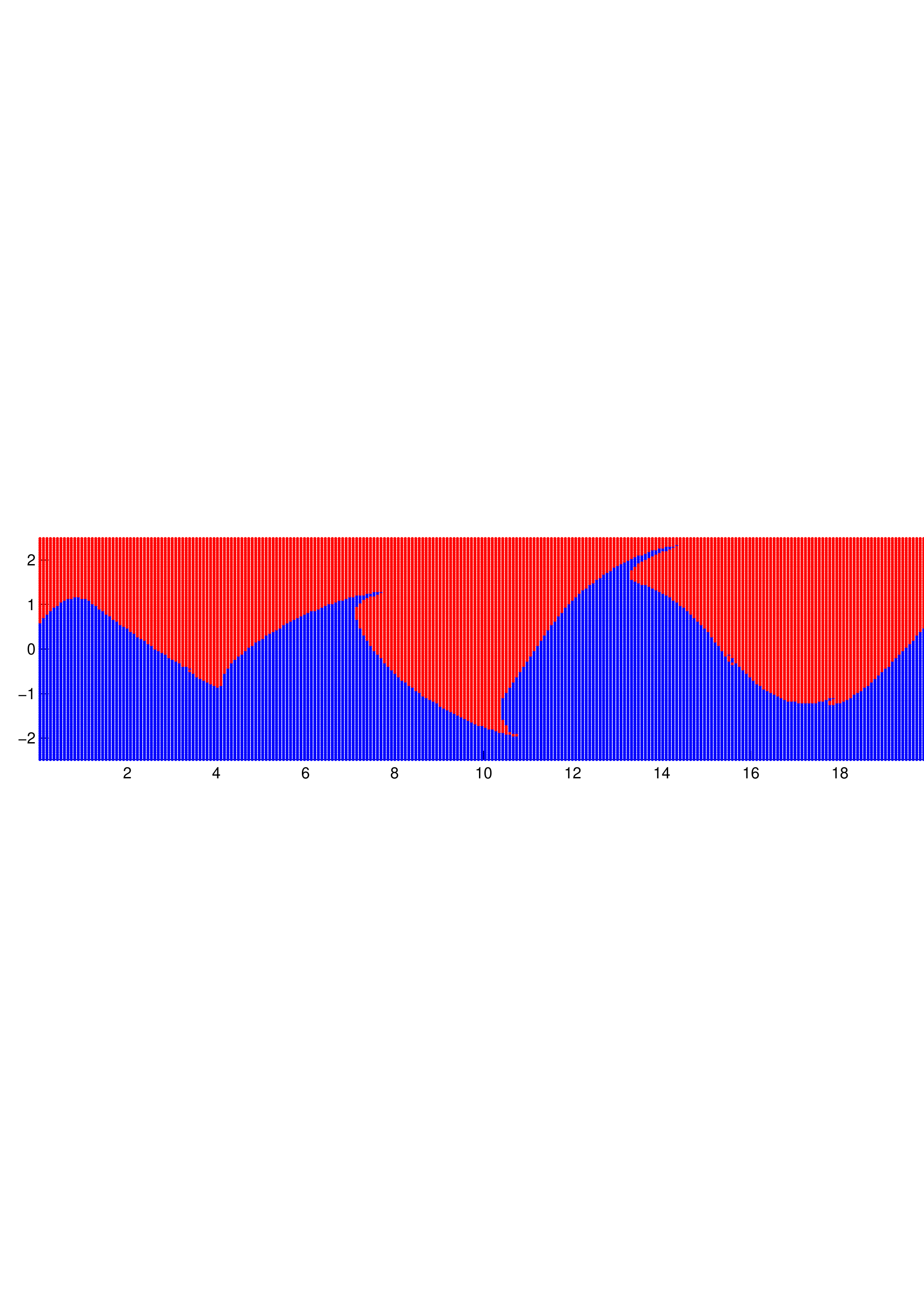}
  \includegraphics[width=10cm]{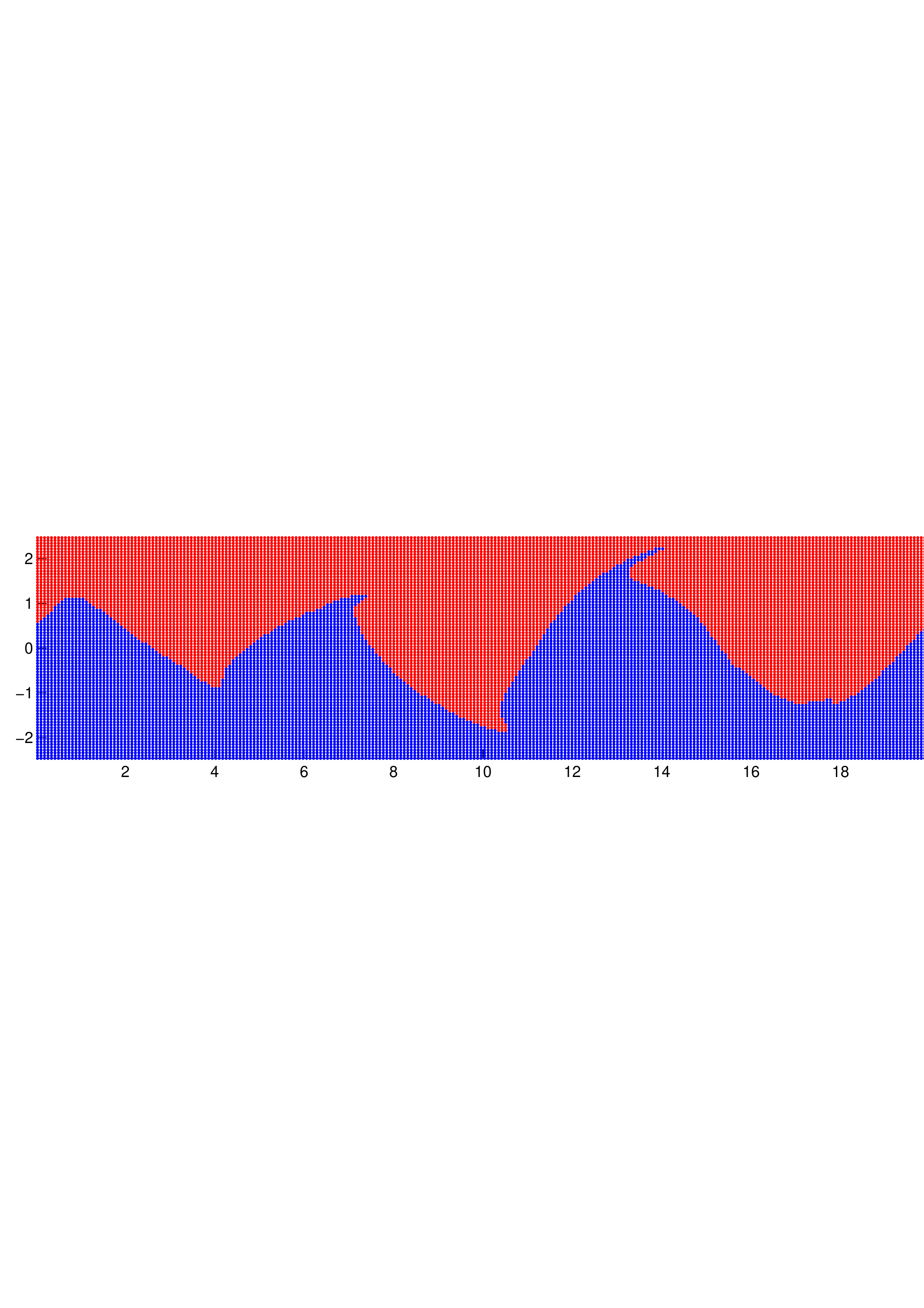}\\
  \caption{Left: Thresholding second left singular vector of $\mathcal{L}_\epsilon$, $\epsilon\approx 0.0391$ to maximise expression (\ref{form1}).  Right:  As for Left, with $\epsilon=0.1$ }
\end{figure}
\begin{figure}
  \label{rightthresh} 
\hspace*{-1.5cm}
      \includegraphics[width=10cm]{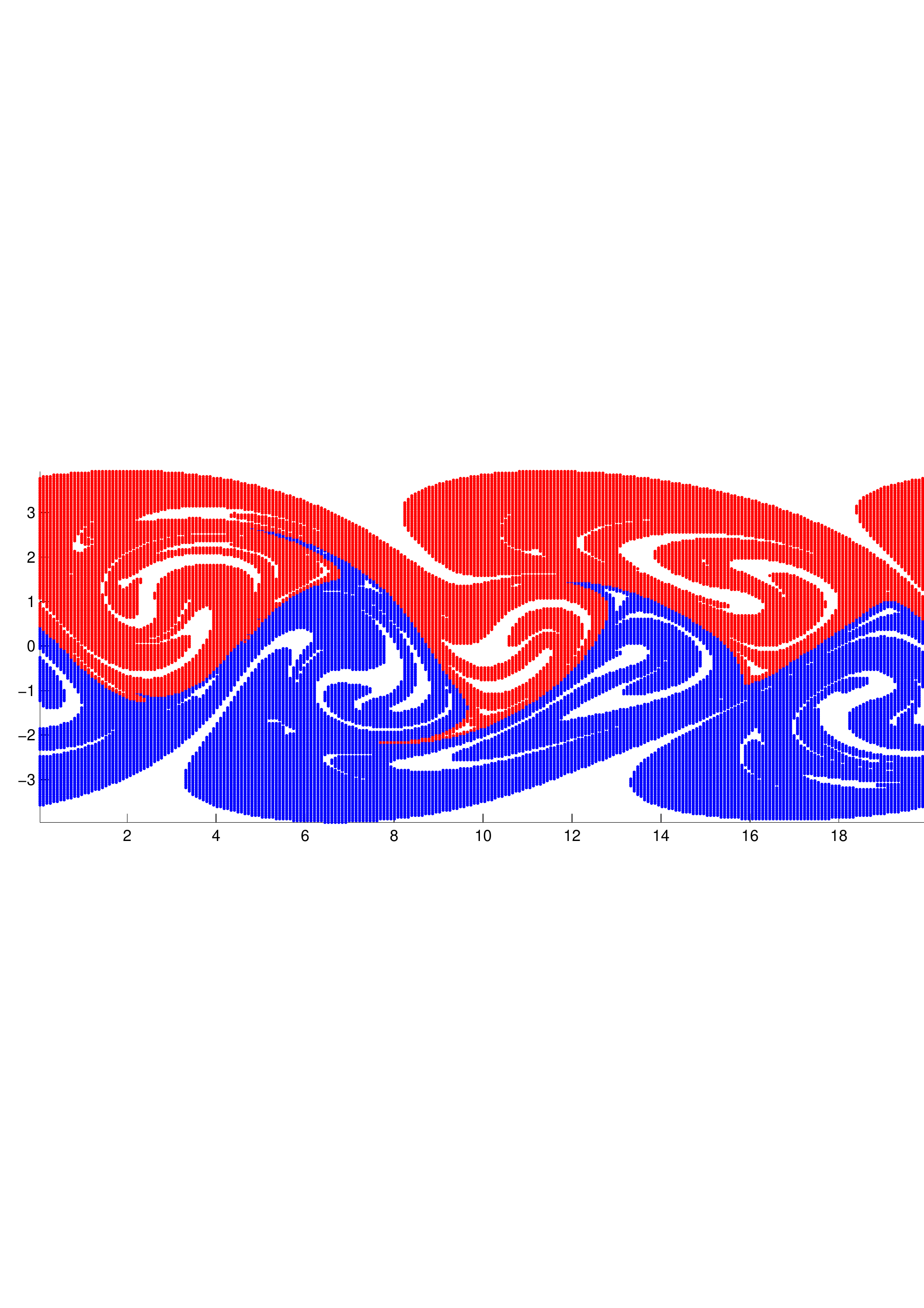}
  \includegraphics[width=10cm]{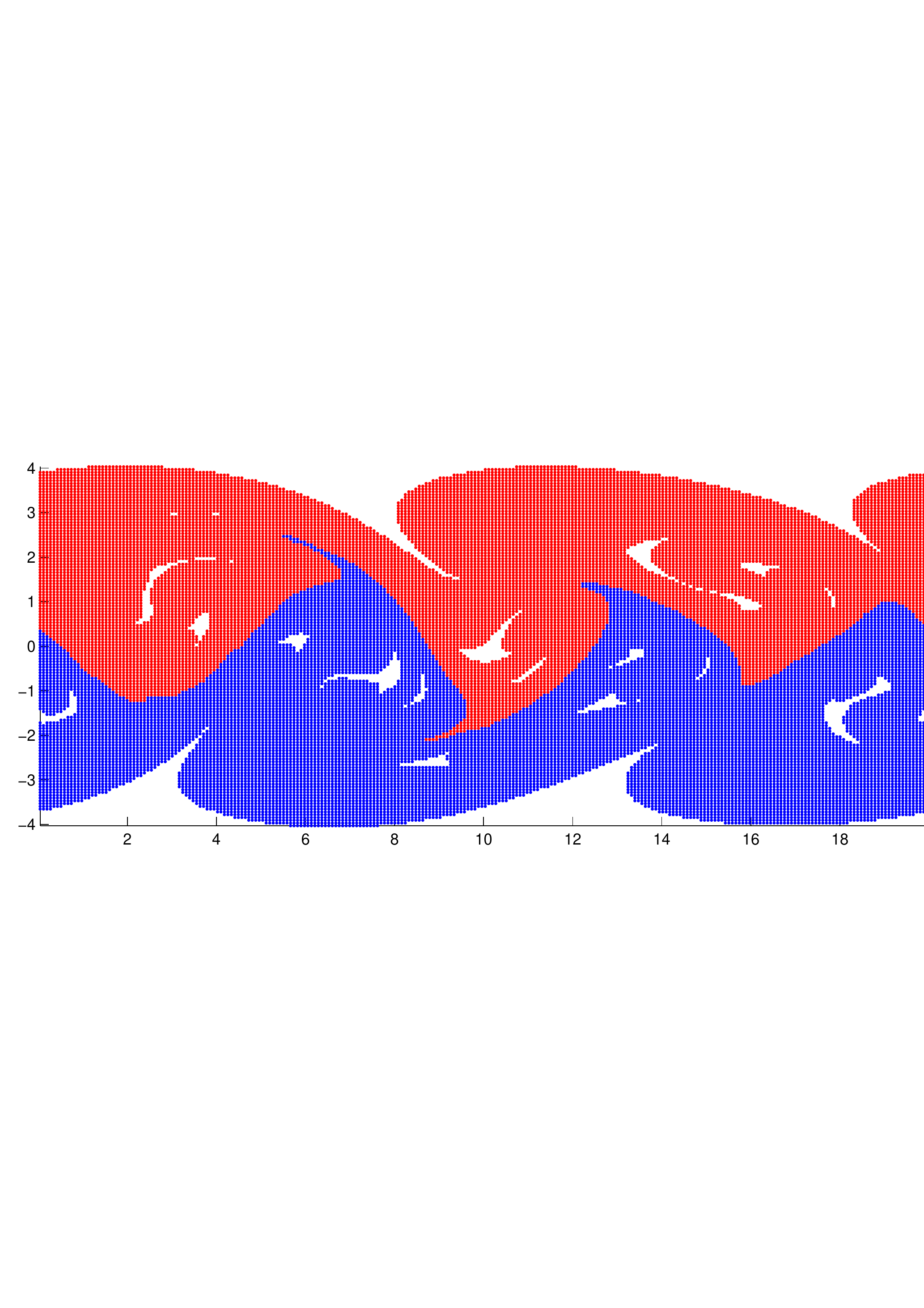}\\
    \caption{Left: Thresholding second right singular vector of $\mathcal{L}_\epsilon$, $\epsilon\approx 0.0391$ to maximise expression (\ref{form1}).  Right:  As for Left, with $\epsilon=0.1$}
\end{figure}
The maximal value in (\ref{form1}) obtained from this level set thresholding procedure was computed to be 1.9854, consistent with (and close to) the upper bound of 1.9969 given by (\ref{boundeqn}).
The $\mu$-measures of the red (resp.\ blue) sets are approximately 0.5064 (resp.\ 0.4936).
\subsection{Advection with explicit diffusion}
Secondly, we explicitly apply a diffusion of radius $\epsilon=0.1$ before and after the action of $\mathcal{P}_\epsilon$.
Numerically, this is achieved by applying a mask of 37 points in an $\epsilon$-ball about each of the 36 test points per box, before and after the action of the deterministic dynamics.
This results in a $20480\times 29071$ matrices $P_\epsilon$ and $L_\epsilon$ with the latter having a leading singular value of 1 and second singular value $\sigma_2=0.9793$.
An estimate of $h_{\nu_\epsilon}$, produced as $\mathbf{1}P_\epsilon$, is shown in Figure 1 (left).
The second left (resp.\ right) singular vector is shown in the right image of Figure 2 (resp.\ Figure 3).
An optimal level set thresholding of the second left (resp.\ right) singular vector is shown in the right image of Figure 4 (resp.\ Figure 5); see Remark \ref{remarkN} for details.
The maximal value in (\ref{form1}) obtained from this level set thresholding procedure was computed to be 1.9544, consistent with the upper bound of 1.9793 given by (\ref{boundeqn}).
The $\mu$-measures of the red (resp.\ blue) sets are approximately 0.5031 (resp.\ 0.4969).

Furthermore, the increasing regularity of the left and right singular vectors with increasing $\epsilon$, as predicted by Propositions \ref{Lholderlemma} and \ref{Lstarholderlemma} is evident in Figures 2 and 3.

%
%

\appendix
\section{Proofs}
\subsection{Boundedness and compactness of $\Le$ and $\Le^*$}
The following Lemma is a straight-forward modification of Prop II.1.6 in Conway \cite{conway}, which we include for completeness.
\begin{lemma}
\label{Lbound}
Let $(X,\mu)$ and $(Y,\nu)$ be measure spaces and suppose $k:X\times Y\to \mathbb{R}$ is measurable, and that
\begin{eqnarray}
\label{int1}
\int_X |k(x,y)|\ d\mu(x)\le c_1&&\quad\mbox{for $\nu$-almost all $y\in Y$},\\
\label{int2}
\int_Y |k(x,y)|\ d\nu(y)\le c_2&&\quad\mbox{for $\mu$-almost all $x\in X$}.
\end{eqnarray}
If $K:L^2(X,\mu)\to L^2(Y,\nu)$ is defined by $Kf(x)=\int_X k(x,y)f(y)\ d\mu(y)$ then $K$ is a bounded linear operator and $\|K\|\le (c_1c_2)^{1/2}$.
\end{lemma}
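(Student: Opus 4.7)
The plan is to prove this via the classical Schur test. First I would write out $|Kf(y)|$ using the triangle inequality, absorbing the absolute value inside the integral, so that $|Kf(y)| \le \int_X |k(x,y)|\,|f(x)|\,d\mu(x)$. (Note: the statement as written has $Kf(x)$, but for the map to land in $L^2(Y,\nu)$ the argument must be $y\in Y$, with $f$ integrated against $x$; I would proceed with this correction.)

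The key trick is the symmetric splitting $|k(x,y)|\,|f(x)| = |k(x,y)|^{1/2}\cdot\bigl(|k(x,y)|^{1/2}|f(x)|\bigr)$, after which Cauchy--Schwarz in $L^2(X,\mu)$ gives
\begin{equation*}
|Kf(y)|^2 \le \left(\int_X |k(x,y)|\,d\mu(x)\right)\left(\int_X |k(x,y)|\,|f(x)|^2\,d\mu(x)\right) \le c_1 \int_X |k(x,y)|\,|f(x)|^2\,d\mu(x),
\end{equation*}
where the last inequality uses hypothesis (\ref{int1}) for $\nu$-a.e.\ $y$.

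Next I would integrate in $y$ against $\nu$, apply Tonelli to swap the order of integration (everything is non-negative, so this is legal), and then use hypothesis (\ref{int2}) on the inner $y$-integral to obtain
\begin{equation*}
\|Kf\|_{L^2(\nu)}^2 \le c_1 \int_X |f(x)|^2 \left(\int_Y |k(x,y)|\,d\nu(y)\right) d\mu(x) \le c_1 c_2 \|f\|_{L^2(\mu)}^2.
\end{equation*}
Taking square roots yields the desired bound $\|K\|\le (c_1 c_2)^{1/2}$. There is no real obstacle here; the only point requiring a small care is verifying measurability of $Kf$ and the legality of Tonelli, which follows from the measurability of $k$ together with the hypotheses that ensure the relevant iterated integrals are finite. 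Linearity of $K$ is immediate from linearity of the integral.
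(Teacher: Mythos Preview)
Your proof is correct and follows essentially the same argument as the paper: the $|k|^{1/2}\cdot|k|^{1/2}$ splitting, Cauchy--Schwarz in $x$, then integration in $y$ with a swap of order and the second kernel bound. The only differences are cosmetic---you note the typo in the variable of $Kf$ and explicitly invoke Tonelli, neither of which the paper spells out.
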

\begin{proof}
\begin{eqnarray*}
|Kf(y)|&\le&\int_X|k(x,y)||f(x)|\ d\mu(x)\\
&=&\int_X|k(x,y)|^{1/2}|k(x,y)|^{1/2}|f(x)|\ d\mu(x)\\
&\le&\left(\int_X|k(x,y)|\ d\mu(x)\right)^{1/2}\left(\int_X|k(x,y)||f(x)|^2\ d\mu(x)\right)^{1/2}\\
&=&c_1^{1/2}\left(\int_X|k(x,y)||f(x)|^2\ d\mu(x)\right)^{1/2}
\end{eqnarray*}
Thus,
\begin{eqnarray*}
\|Kf\|_{L^2(\nu)}^2&=&\int_Y |Kf(y)|^2\ d\nu(y) \\
&\le&\int_Yc_1\int_X|k(x,y)||f(x)|^2\ d\mu(x)d\nu(y)\\
&=&\int_X|f(x)|^2c_1\int_Y|k(x,y)|\ d\nu(y)d\mu(x)\\
&=&\int_Xc_1c_2|f(x)|^2\ d\mu(x)=c_1c_2\|f\|_{L^2(\mu)}^2
\end{eqnarray*}
\end{proof}
\begin{proof}[Proof of Lemma \ref{compactlemma}]
We follow the proofs of Proposition II.4.7 and Lemma II.4.8 \cite{conway}, generalising from $L^2(X\times X,\mu\times\mu)$ to $L^2(X\times Y,\mu\times\nu)$.
Inner products and norms on $L^2(X,\mu)$ and $L^2(Y,\nu)$ will have subscripts $\mu$ and $\nu$ respectively;  no subscript means an inner product or norm on $L^2(X\times Y,\mu\times\nu)$.
 The assumptions on the integrability of $k$ guarantee that $\mathcal{L}:L^2(X,\mu)\to L^2(Y,\nu)$ and its dual are bounded linear operators: \ $\|\mathcal{L}f\|^2_{\nu}=\int(\int k(x,y)f(x)\ d\mu(x))^2\ d\nu(y)\le \int(\int k(x,y)^2\ d\mu(x)\cdot \int f(x)^2\ d\mu(x))\ d\nu(y)=\|k\|^2\|f\|^2_{\mu}$;  similarly for $\mathcal{L}^*$.

Let $\{e_i\}$ be an orthonormal basis of $L^2(X,\mu)$ and $\{f_j\}$ be an orthonormal basis of $L^2(Y,\nu)$.
Define $\phi_{ij}(x,y)=\bar{e}_i(x)f_j(y)$.
It is easy to check that $\{\phi_{ij}\}$ is an orthonormal set in $L^2(X\times Y,\mu\times\nu)$:
\begin{eqnarray*}
\langle \phi_{ij},\phi_{kl}\rangle&=&\int \bar{e}_i(x)f_j(y)\cdot e_k(x)\bar{f}_l(y)\ d\mu(x)d\nu(y)\\
&=&\int\left(\int\bar{e}_i(x)e_k(x)\ d\mu(x)\right)f_j(y)\bar{f}_l(y)\ d\nu(y)\\
&=&\delta_{ki}\delta_{jl}.
\end{eqnarray*}
One also has
\begin{eqnarray*}
\langle k,\phi_{ij}\rangle&=&\int k(x,y)\bar{\phi}_{ij}(x,y)\ d\mu(x)d\nu(y)\\
&=&\int k(x,y)e_i(x)\bar{f}_j(y)\ d\mu(x)d\nu(y)\\
&=&\langle \mathcal{L}e_i,f_j\rangle_\nu.
\end{eqnarray*}
Therefore $\|k\|^2\ge \sum_{i,j}|\langle k,\phi_{ij}\rangle|^2=\sum_{i,j}|\langle \mathcal{L}e_i,f_j\rangle_\nu|^2.$
Since $k\in L^2(X\times Y,\mu\times\nu)$ there are at most a countable number of $i,j$ such that $\langle k,\phi_{ij}\rangle\neq 0$;  denote these by $\{\psi_{lm}\}$ and note that $\langle k,\phi_{ij}\rangle=0$ unless $\phi_{ij}\in\{\psi_{lm}\}$.
Let $\psi_{lm}(x,y)=\bar{e}_l(x)f_m(y)$, let $P_n:L^2(X,\mu)\to L^2(X,\mu)$ be the orthogonal projection onto $\{e_l:1\le l\le n\}$ and $Q_n:L^2(Y,\nu)\to L^2(Y,\nu)$ be the orthogonal projection onto $\{f_m:1\le m\le n\}$.
Define $\mathcal{L}_n=\mathcal{L}P_n+Q_n\mathcal{L}-Q_n\mathcal{L}P_n$, a finite-rank operator.
We will show that $\|\mathcal{L}-\mathcal{L}_n\|\to 0$ as $n\to\infty$, showing $\mathcal{L}$ is compact.

Let $g\in L^2(X,\mu)$ with $\|g\|_\mu\le 1$, and write $g=\sum_i\alpha_ie_i$.
One has
\begin{eqnarray*}
\|\mathcal{L}g-\mathcal{L}_ng\|_\nu^2&=&\sum_j|\langle\mathcal{L}g-\mathcal{L}_ng,f_j\rangle_\nu|^2\\
&=&\sum_j\left|\sum_i\alpha_i\langle(\mathcal{L}-\mathcal{L}_n)e_i,f_j\rangle_\nu\right|^2\\
&=&\sum_m\left|\sum_l\alpha_l\langle(\mathcal{L}-\mathcal{L}_n)e_l,f_m\rangle_\nu\right|^2\\
&\le&\sum_m\left(\sum_l|\alpha_l|^2\right)\left(\sum_l|\langle(\mathcal{L}-\mathcal{L}_n)e_l,f_m\rangle_\nu|^2\right)\\
&=&\|g\|_\mu^2\sum_m\sum_l\left|\langle\mathcal{L}e_l,f_m\rangle_\nu-\langle\mathcal{L}_ne_l,f_m\rangle_\nu\right|^2\\
&=&\|g\|_\mu^2\sum_m\sum_l\left|\langle\mathcal{L}e_l,f_m\rangle_\nu-\langle\mathcal{L}P_ne_l,f_m\rangle_\nu-\langle Q_n\mathcal{L}e_l,f_m\rangle_\nu+\langle Q_n\mathcal{L}P_ne_l,f_m\rangle_\nu\right|^2\\
&=&\|g\|_\mu^2\sum_{m>n}\sum_{l>n}\left|\langle\mathcal{L}e_l,f_m\rangle_\nu\right|^2\\
&=&\|g\|_\mu^2\sum_{m>n}\sum_{l>n}|\langle k,\phi_{lm}\rangle|^2.
\end{eqnarray*}
The penultimate equality holds since when $l\le n$, $P_ne_l=e_l$ and when $m\le n$, $Q_nf_m=f_m$;  thus if either $l\le n$ or $m\le n$, the entire expression is zero.
Finally, since $\sum_{l,m}|\langle k,\phi_{lm}\rangle|^2\le \|k\|<\infty$, given $\epsilon>0$, choose $n$ large enough so that this sum is less than $\epsilon^2$.
Since $\mathcal{L}$ is compact, so is $\mathcal{L}^*$.
\end{proof}

\subsection{Simplicity of the leading singular value of $\Le$}

For the two lemmas in this section we assume that $X=Y_\epsilon=M\subset \mathbb{R}^d$ is compact, that $T$ is a diffeomorphism, that $\mu$ is absolutely continuous with positive density, and that $\ax=\ay=\mathbf{1}_{B_\epsilon(0)}/\ell(B_\epsilon(0))$.

\begin{lemma}
\label{kboundlemma}
$k_\epsilon(x,y)$ is bounded.
\end{lemma}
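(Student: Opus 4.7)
The plan is to work directly from the explicit representation (\ref{withinvball}), namely
\begin{equation*}
k_\epsilon(x,y) = \frac{\ell(B_\epsilon(x) \cap T^{-1} B_\epsilon(y))}{\int_X \ell(B_\epsilon(x') \cap T^{-1} B_\epsilon(y))\, d\mu(x')},
\end{equation*}
and to bound the numerator from above and the denominator from below, uniformly in $(x,y)\in X\times Y_\epsilon$, for each fixed $\epsilon>0$. The numerator is trivially controlled by $\ell(B_\epsilon(0))=\omega_d\epsilon^d$, so all the work lies in producing a uniform lower bound on the denominator.

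To handle the denominator, I would apply Fubini to rewrite
\begin{equation*}
\int_X \ell(B_\epsilon(x') \cap T^{-1} B_\epsilon(y))\, d\mu(x') = \int_{T^{-1} B_\epsilon(y)} \mu(B_\epsilon(z) \cap X)\, dz,
\end{equation*}
after which the estimate reduces to two uniform geometric facts. First, the assumption that $h_\mu$ is bounded below by some $m>0$ combined with a uniform lower bound $\ell(B_\epsilon(z)\cap M)\ge c_M\epsilon^d$ (valid for all $z\in M$ because $M\subset\mathbb{R}^d$ is a nice compact set) gives $\mu(B_\epsilon(z)\cap X)\ge m c_M\epsilon^d$. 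Second, since $T$ is a diffeomorphism with $|\det DT|$ bounded above by some $B<\infty$, a change of variables yields
\begin{equation*}
\ell(T^{-1}B_\epsilon(y)) \ge \frac{1}{B}\ell(B_\epsilon(y)\cap M) \ge \frac{c_M\epsilon^d}{B}.
\end{equation*}
Multiplying the two bounds produces a denominator at least $(mc_M^2/B)\epsilon^{2d}$, so that $k_\epsilon(x,y)\le (B\omega_d)/(mc_M^2)\cdot \epsilon^{-d}<\infty$, which is the required boundedness for fixed $\epsilon$.

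The only genuine obstacle is the uniform lower bound $\ell(B_\epsilon(z)\cap M)\ge c_M\epsilon^d$ for all $z\in M$; this is a mild regularity assumption on $M$ (e.g.\ Lipschitz boundary, or $M$ a smooth submanifold) that is already implicit in the paper's standing hypotheses. Everything else is routine: Fubini together with the uniform bounds on $h_\mu$ and $|\det DT|$ already recorded at the start of Section 5.
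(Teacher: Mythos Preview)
Your argument is correct, but it proceeds differently from the paper. The paper bounds the denominator by a geometric inscribed-ball argument: since $T$ is a diffeomorphism on a compact set, each preimage $T^{-1}B_\epsilon(y)$ contains a Euclidean ball of some uniform radius $\delta^*>0$, and for every $x$ in the $\delta^*/2$-interior of that preimage one has $\ell(B_\epsilon(x)\cap T^{-1}B_\epsilon(y))\ge \ell(B_{\delta^*/2}(0))$; integrating against $\mu$ (which has density bounded below) over that $\delta^*/2$-interior gives the uniform lower bound. You instead swap the order of integration via Fubini to obtain $\int_{T^{-1}B_\epsilon(y)}\mu(B_\epsilon(z)\cap X)\,dz$ and then bound the integrand and the domain of integration separately using the lower bound on $h_\mu$ and the upper bound on $|\det DT|$. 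Your route is somewhat cleaner and yields the explicit scaling $k_\epsilon=O(\epsilon^{-d})$ directly. Both approaches ultimately rest on the same mild boundary regularity of $M$ (an interior ball / cone condition ensuring $\ell(B_\epsilon(z)\cap M)\gtrsim\epsilon^d$ uniformly for $z\in M$); you flag this explicitly, whereas the paper's proof leaves it implicit in the claim $\delta^*>0$ and in the positivity of the constant $D^*$.
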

\begin{proof}
Recall $$k_\epsilon(x,y)=\frac{\ell(B_\epsilon(x)\cap T^{-1}B_\epsilon(y))}{\int_X \ell(B_\epsilon(x)\cap T^{-1}B_\epsilon(y))\ d\mu(x)}.$$
Clearly the numerator is uniformly bounded above by $\ell(B_\epsilon(0))$;  we now show that the denominator is uniformly bounded below.
One has $\ell(T^{-1}B_\epsilon)\ge \ell(B_\epsilon(0))/\sup_{x\in M}|\det DT(x)|$.
Define $\delta^*=\sup\{\delta: \mbox{ for all }y\in M, \exists z=z(y)\mbox{ such that }B_\delta(z)\subset T^{-1}B_\epsilon(y)\}$.
Since $|\det DT|$ is uniformly bounded above, $\delta^*>0$.
Then $\int \ell(B_\epsilon(x)\cap T^{-1}B_\epsilon(y))\ d\mu(x)\ge D^*\cdot \ell(B_{\delta^*/2}(0))$ for all $y\in M$, where $D^*=\inf_{y\in M} \mu(\{z\in T^{-1}B_\epsilon(y): B_{\delta^*/2}(z)\subset T^{-1}B_\epsilon(y)\}$.
Since $\mu$ is absolutely continuous with positive density, $D^*>0$.
\end{proof}


\begin{lemma}
\label{coverlemma}
There exists a $q>0$ and an $A\subset X$ with $\mu(A)>0$ such that $\kappa_q(x,y)>0$ for $\mu$-a.a. $x\in X$ and $y\in A$.
\end{lemma}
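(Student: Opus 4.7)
My plan is to prove Lemma \ref{coverlemma} by first establishing pointwise positivity of $\kappa$ on a uniform neighbourhood of the diagonal, and then chaining these neighbourhoods together using compactness and connectedness of $M$. From formula (\ref{withinvball}), $k_\epsilon(x,z)>0$ iff $B_\epsilon(x)\cap T^{-1}B_\epsilon(z)\neq\emptyset$, i.e.\ iff $z\in S(x):=B_\epsilon(T(B_\epsilon(x)))\cap Y_\epsilon$. Because $T$ is a diffeomorphism of the compact set $M$, there is a uniform $\eta>0$ (take $\eta=\epsilon/\sup_{x\in M}\|DT^{-1}(T(x))\|$) with $T(B_\epsilon(x))\supset B_\eta(T(x))$, and hence $S(x)\supset B_\eta(T(x))$ for every $x\in M$.

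Next I would translate this into a statement about $\kappa(x,y)=\int k_\epsilon(x,z)k_\epsilon(y,z)\,d\nu_\epsilon(z)$. Since $h_{\nu_\epsilon}>0$ and is bounded below on $Y_\epsilon$ (it is the image of the bounded-below $h_\mu$ under the stochastic $\mathcal{P}_\epsilon$), $\nu_\epsilon$ is equivalent to Lebesgue on $Y_\epsilon$, so $\kappa(x,y)>0$ iff $S(x)\cap S(y)$ has positive Lebesgue measure. This intersection contains the open set $B_\eta(T(x))\cap B_\eta(T(y))$, which is nonempty as soon as $|T(x)-T(y)|<2\eta$; writing $L=\sup_{x\in M}\|DT(x)\|$, this is guaranteed whenever $|x-y|<\delta:=2\eta/L$. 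Thus there exists a uniform $\delta>0$ with $\kappa(x,y)>0$ on the strip $\{|x-y|<\delta\}$.

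To pass from $\kappa$ to $\kappa_q$, I would iterate the semigroup identity $\kappa_q(x,y)=\int_X \kappa_{q-1}(x,z)\kappa(z,y)\,d\mu(z)$. Arguing by induction on $q$: if $\{z:\kappa_{q-1}(x,z)>0\}$ contains an open set $U_x$, then intersecting $U_x$ with the open set $B_\delta(y)\cap X$ gives an open set on which both kernels are positive; this intersection has positive $\mu$-measure provided it is nonempty, thanks to the uniform lower bound on $h_\mu$. Under the (path-)connectedness of $M$ implicit in the setup, a routine compactness argument (covering $M$ by finitely many balls of radius $\delta/2$) produces a uniform $Q=Q(\delta)$ such that any two points of $X$ may be joined by a $Q$-step $\delta$-chain. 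Fixing any $y_0\in\mathrm{int}\,X$ and taking $A=B_r(y_0)$ with $r$ small enough that $A\subset\mathrm{int}\,X$ yields $\mu(A)>0$, and chaining gives $\kappa_Q(x,y)>0$ for every $x\in X$ and every $y\in A$, as required.

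The main obstacle I anticipate is the inductive chaining step, where pointwise positivity of $\kappa$ must be upgraded to positivity of an integral against $\mu$. The openness of the positivity sets $S(x)$, combined with the lower bound on $h_\mu$ and the uniform Lipschitz estimates coming from $T\in\mathrm{Diff}(M)$, make this step work, but they are what is really doing the heavy lifting. A secondary, and more philosophical, point is the use of (path-)connectedness of $M$: without it, disjoint components of $M$ give rise to independent random walks under $\mathcal{A}_\epsilon$ and simplicity of $\sigma_1$ would genuinely fail, so this hypothesis is unavoidable rather than merely a convenient simplification.
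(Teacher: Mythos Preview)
Your argument is correct, but it takes a different route from the paper's. You first establish that $\kappa(x,y)>0$ on a uniform $\delta$-strip about the diagonal by invoking Lipschitz bounds on $T$ and $T^{-1}$, and then chain $\delta$-steps together via a finite-subcover/connectedness argument. The paper instead tracks the support of $\mathcal{A}_\epsilon\delta_x$ directly: writing out $\mathcal{A}_\epsilon=\Dx^*\circ\mathcal{K}\circ\Dy^*\circ\Dy\circ\mathcal{P}\circ\Dx$ and following the support through each factor, one sees that the support of $\mathcal{A}_\epsilon\delta_x$ contains $B_\epsilon(T^{-1}(B_{2\epsilon}(T(B_\epsilon(x)))))\supset B_\epsilon(B_\epsilon(x))=B_{2\epsilon}(x)$, simply because $T^{-1}$ undoes $T$. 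Thus each iteration of $\mathcal{A}_\epsilon$ inflates the support by a fixed $\epsilon$-margin \emph{regardless of any quantitative properties of $T$}, and boundedness of $X$ finishes the job. The paper's approach is slicker in that the expansion rate does not depend on $\|DT\|$ or $\|DT^{-1}\|$; your approach is more explicit about the chaining mechanism and, importantly, you correctly flag the (implicit) connectedness hypothesis on $M$ that both arguments require---without it the conclusion, and hence simplicity of $\sigma_1$, would genuinely fail.
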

\begin{proof}
Consider the action of $\mathcal{A}_\epsilon=\Le^*\Le$ on a distribution $\delta_x$.  As $\Le(f)=\Dy\circ\P\circ\Dx(f)/\Dy\circ\P\circ\Dx(f\cdot h_\mu)$, the initial application of $\Dx$ produces a function with support $B_\epsilon(x)$ (in fact, $\mathbf{1}_{B_\epsilon(x)}$). The application of $\P$ now creates a function with support $T(B_\epsilon(x))$ (because $|\det DT|$ is uniformly bounded above) and finally the application of $\Dy$ produces a support of $B_\epsilon(T(B_\epsilon(x)))$.
Now applying $\Le^*$ we again apply $\Dx$, then $T^{-1}$, then $\Dy$, producing a function with support $S=B_\epsilon(T^{-1}(B_\epsilon(B_\epsilon(T(B_\epsilon(x))))))$.
Clearly $B_{\epsilon/2}(x)\subset S$.
At each iteration of $\mathcal{A}_\epsilon$ the support expands by at least $\epsilon/2$.  As $X$ is bounded, eventually the support fills $X$ after $q$ iterations for some $q$.
Thus $\kappa_q(x,y)>0$ for $\mu$-a.a. $x\in X$ and $y\in X$.
\end{proof}

\subsection{Regularity of singular vectors of $\Le$}

\begin{proof}[Proof of Lemma \ref{lipschitzlemma}]
One has
\begin{eqnarray}
\nonumber\frac{\left|\Dx f(y+\gamma)-\Dx f(y)\right|}{\|\gamma\|}
&=&\frac{\left|\int_X(\ax(y+\gamma-x)-\ax(y-x))f(x)\ dx\right|}{\|\gamma\|} \\
\nonumber&\le &\|f\|_{L^\infty}\cdot\frac{\int_X|\ax(y+\gamma-x)-\ax(y-x)|\ dx}{\|\gamma\|} \\
\nonumber&= &\|f\|_{L^\infty}\cdot\frac{\int_X|\mathbf{1}_{B_\epsilon(y+\gamma)}-\mathbf{1}_{B_\epsilon(y)}|\ dx}{\ell(B_\epsilon(y))\|\gamma\|} \\
\label{finalestlip}&\le&\|f\|_{L^\infty}\cdot\frac{\ell(B_\epsilon(y+\gamma)\triangle B_\epsilon(y))}{\ell(B_\epsilon(y))\|\gamma\|}.
\end{eqnarray}
We show that $\limsup_{\|\gamma\|\to 0}\left|\Dx f(y+\gamma)-\Dx f(y)\right|/\|\gamma\|\le C\|f\|_{L^\infty}/\epsilon$, for all $x\in X$.
From this, global Lipschitzness follows by Lemma \ref{localgloballipschitz}.
We now detail the computations for dimensions 1, 2, and 3.
The main estimate is for $\ell(B_\epsilon(y+\gamma)\triangle B_\epsilon(y))$ in (\ref{finalestlip}), which only depends on $\epsilon$ and $\gamma$.

\emph{Dimension 1:} $B_\epsilon$ is an interval of length $2\epsilon$ and clearly $\ell(B_\epsilon(y+\gamma)\triangle B_\epsilon(y))=2|\gamma|$ for $0\le |\gamma|\le 2\epsilon$.
Thus for $0\le|\gamma|\le 2\epsilon$, $(\ref{finalestlip})=\|f\|_{L^\infty}2|\gamma|/(2\epsilon|\gamma|)=\|f\|_{L^\infty}/\epsilon$.

\emph{Dimension 2:} $B_\epsilon$ is a disk of radius $\epsilon$.  For $0\le \|\gamma\|\le 2\epsilon$, the symmetric difference area $\ell(B_\epsilon(y+\gamma)\triangle B_\epsilon(y))$ is $A=2(\pi\epsilon^2-(2\epsilon^2\cos^{-1}(\|\gamma\|/2\epsilon)-(1/2)\|\gamma\|\sqrt{4\epsilon^2-\|\gamma\|^2}))$ \cite{weisstein2}.
To first order in $\|\gamma\|$, $\cos^{-1}(\|\gamma\|/2\epsilon)=\pi/2-\|\gamma\|/2\epsilon+O(\|\gamma\|^2)$.
Thus $A=2(\pi\epsilon^2-\pi\epsilon^2+\|\gamma\|\epsilon+(1/2)\|\gamma\|\sqrt{4\epsilon^2-\|\gamma\|^2})+O(\|\gamma\|^2)$ and
$\limsup_{\|\gamma\|\to 0} \|f\|_{L^\infty}A/\|\gamma\|\pi\epsilon^2=\|f\|_{L^\infty}(4/\pi)/\epsilon.$

\emph{Dimension 3:}$B_\epsilon$ is a disk of radius $\epsilon$.  For $0\le \|\gamma\|\le 2\epsilon$, the symmetric difference volume $\ell(B_\epsilon(y+\gamma)\triangle B_\epsilon(y))$ is $V=2(4/3\pi\epsilon^3-(\pi/12(4\epsilon+\|\gamma\|)(2\epsilon-\|\gamma\|)^2))$ \cite{weisstein3}.
To first order in $\|\gamma\|$, $V=2(4/3\pi\epsilon^3-(\pi/12(16\epsilon^3-12\epsilon^2\|\gamma\|)))+O(\|\gamma\|^2)$.
Thus $\lim_{\|\gamma\|\to 0} \|f\|_{L^\infty}V/\|\gamma\|(4/3)\pi\epsilon^3=\|f\|_{L^\infty}2\pi\epsilon^2\|\gamma\|/\|\gamma\|(4/3)\pi\epsilon^3=\|f\|_{L^\infty}(3/2)/\epsilon$.

By Lemma \ref{localgloballipschitz} the constants found above are also global Lipschitz constants.
\end{proof}

\begin{proof}[Proof of Lemma \ref{holderlemma}]
Let $x=y+\gamma$.
One has
\begin{eqnarray}
\nonumber\left|\Dx f(y+\gamma)-\Dx f(y)\right|&=&\left|\int_X(\ax(y+\gamma-x)-\ax(y-x))f(x)\ dx\right| \\
\nonumber&\le &\|f\|_{L^2(\ell)}\cdot\left(\int_X(\ax(y+\gamma-x)-\ax(y-x))^2\ dx\right)^{1/2} \\
\nonumber&= &\|f\|_{L^2(\ell)}\cdot\left(\int_X(\mathbf{1}_{B_\epsilon(y+\gamma)}-\mathbf{1}_{B_\epsilon(y)})^2\ dx\right)^{1/2}/\ell(B_\epsilon(y)) \\
\label{finalestholder}&\le&\|f\|_{L^2(\ell)}\cdot\left(\ell(B_\epsilon(y+\gamma)\triangle B_\epsilon(y))\right)^{1/2}/\ell(B_\epsilon(y))
\end{eqnarray}
We now detail the computations for dimensions 1, 2, and 3.
The main estimate is for $\ell(B_\epsilon(y+\gamma)\triangle B_\epsilon(y))$ in (\ref{finalestholder}), which only depends on $\epsilon$ and $\gamma$.

\emph{Dimension 1:}  $B_\epsilon$ is an interval of length $2\epsilon$ and clearly
$$\ell(B_\epsilon(y+\gamma)\triangle B_\epsilon(y))=\left\{
                                                   \begin{array}{ll}
                                                     2\|\gamma\|, & \hbox{$\|\gamma\|\le 2\epsilon$;} \\
                                                     2\epsilon, & \hbox{$\|\gamma\|>2\epsilon$.}
                                                   \end{array}
                                                 \right..$$
Thus $$\left(\ell(B_\epsilon(y+\gamma)\triangle B_\epsilon(y))\right)^{1/2}/\ell(B_\epsilon(y))=\left\{
                          \begin{array}{ll}
                            \frac{\sqrt{2\|\gamma\|}}{2\epsilon}, & \hbox{$\|\gamma\|\le \epsilon$;} \\
                            \frac{\sqrt{2\epsilon}}{2\epsilon}, & \hbox{$\|\gamma\|>\epsilon$.}
                          \end{array}
                        \right.,$$
and one has $|\Dx f(x)-\Dx f(y)|\le \|f\|_{L^2(\ell)}\cdot\sqrt{\frac{1}{2}}\frac{1}{\epsilon}\|x-y\|^{1/2}$.

\emph{Dimension 2:} $B_\epsilon$ is a disk of radius $\epsilon$.  For $0\le \|\gamma\|\le 2\epsilon$, the symmetric difference area $\ell(B_\epsilon(y+\gamma)\triangle B_\epsilon(y))$ is $A=2(\pi\epsilon^2-(2\epsilon^2\cos^{-1}(\|\gamma\|/2\epsilon)-(1/2)\|\gamma\|\sqrt{4\epsilon^2-\|\gamma\|^2}))$ \cite{weisstein2}.
Thus, $$\ell(B_\epsilon(y+\gamma)\triangle B_\epsilon(y))=\left\{
                                                   \begin{array}{ll}
                                                     2(\pi\epsilon^2-(2\epsilon^2\cos^{-1}(\|\gamma\|/2\epsilon)-(1/2)\|\gamma\|\sqrt{4\epsilon^2-\|\gamma\|^2})), & \hbox{$\|\gamma\|\le 2\epsilon$;} \\
                                                     2\pi\epsilon^2, & \hbox{$\|\gamma\|>2\epsilon$.}
                                                   \end{array}
                                                 \right..$$
\begin{eqnarray*}
\lefteqn{\left(\ell(B_\epsilon(y+\gamma)\triangle B_\epsilon(y))\right)^{1/2}/\ell(B_\epsilon(y))}\\
&=&\left\{
                          \begin{array}{ll}
                            \sqrt{2(\pi\epsilon^2-(2\epsilon^2\cos^{-1}(\|\gamma\|/2\epsilon)-(1/2)\|\gamma\|\sqrt{4\epsilon^2-\|\gamma\|^2}))}/(\pi\epsilon^2), & \hbox{$\|\gamma\|\le 2\epsilon$;} \\
                            \sqrt{2\pi\epsilon^2}/(\pi\epsilon^2), & \hbox{$\|\gamma\|>2\epsilon$.}
                          \end{array}
                        \right.,
                        \end{eqnarray*}
and one has $$|\Dx f(x)-\Dx f(y)|\le \|f\|_{L^2(\ell)}\cdot\sqrt{2/\pi}\frac{1}{\epsilon^{3/2}}\|x-y\|^{1/2}$$ for all $x,y\in X$.

\emph{Dimension 3:}$B_\epsilon$ is a disk of radius $\epsilon$.  For $0\le \|\gamma\|\le 2\epsilon$, the symmetric difference volume $\ell(B_\epsilon(y+\gamma)\triangle B_\epsilon(y))$ is \cite{weisstein3}
\begin{eqnarray*}
V&=&2(4/3\pi\epsilon^3-(\pi/12(4\epsilon+\|\gamma\|)(2\epsilon-\|\gamma\|)^2))\\
&=&8\pi\epsilon^3/3-((\pi/6)(4\epsilon+\|\gamma\|)(4\epsilon^2-4\|\gamma\|\epsilon+\|\gamma\|^2))\\
&=&8\pi\epsilon^3/3-((\pi/6)(16\epsilon^3-12\|\gamma\|\epsilon^2+\|\gamma\|^3))\\
&=&2\pi\|\gamma\|\epsilon^2-\pi\|\gamma\|^3/6.
\end{eqnarray*}
Thus $$\ell(B_\epsilon(y+\gamma)\triangle B_\epsilon(y))=\left\{
                                                   \begin{array}{ll}
                                                     (2\pi\|\gamma\|\epsilon^2-\pi\|\gamma\|^3/6), & \hbox{$\|\gamma\|\le 2\epsilon$;} \\
                                                     8\pi\epsilon^3/3, & \hbox{$\|\gamma\|>2\epsilon$.}
                                                   \end{array}
                                                 \right..$$
Thus \begin{eqnarray*}
\left(\ell(B_\epsilon(y+\gamma)\triangle B_\epsilon(y))\right)^{1/2}/\ell(B_\epsilon(y))&=&\left\{
                          \begin{array}{ll}
                            \sqrt{(2\pi\|\gamma\|\epsilon^2-\pi\|\gamma\|^3/6)}\cdot 3/(4\pi\epsilon^3), & \hbox{$\|\gamma\|\le 2\epsilon$;} \\
                            \sqrt{3/2\pi}\frac{\sqrt{\epsilon}}{\epsilon^2}, & \hbox{$\|\gamma\|>2\epsilon$.}
                          \end{array}
                        \right.,\\
&=&\left\{
                          \begin{array}{ll}
                            \sqrt{1/\pi}\sqrt{9\|\gamma\|/8-3\|\gamma\|^3/32\epsilon^2}\frac{1}{\epsilon^2}, & \hbox{$\|\gamma\|\le 2\epsilon$;} \\
                            \sqrt{3/2\pi}\frac{\sqrt{\epsilon}}{\epsilon^2}, & \hbox{$\|\gamma\|>2\epsilon$.}
                          \end{array}
                        \right.,
                        \end{eqnarray*}
and one has $$|\Dx f(x)-\Dx f(y)|\le \|f\|_{L^2(\ell)}\cdot(3/2)\sqrt{1/2\pi}\frac{1}{\epsilon^2}\|x-y\|^{1/2}$$ for all $x,y\in X$.
\end{proof}

Setting notation for the following lemmas, $T:M\to M$ is a diffeomorphism with $0<A\le |\det DT|\le B<\infty$ and $0<L\le h_\mu\le U<\infty$.
Moreover $X=Y=Y_\epsilon=M$.

\begin{lemma}
\label{lemB0}
$\|f\cdot h_\mu\|_{L^2(\ell)}\le U^{1/2}\|f\|_{L^2(\mu)}$.
\end{lemma}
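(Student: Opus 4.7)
The plan is to unpack the $L^2(\ell)$ norm of $f\cdot h_\mu$, factor one copy of $h_\mu$ into the Lebesgue measure to recover the measure $\mu$, and pull out the remaining $h_\mu$ using its uniform upper bound $U$.

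Concretely, I would start from
\begin{equation*}
\|f\cdot h_\mu\|_{L^2(\ell)}^2 = \int_X |f(x)|^2 h_\mu(x)^2\, d\ell(x),
\end{equation*}
write $h_\mu^2 = h_\mu\cdot h_\mu$, and use $h_\mu \le U$ on one factor to bound this by $U\int_X |f|^2 h_\mu\, d\ell$. Then recognize $h_\mu\, d\ell = d\mu$ to rewrite the integral as $U\|f\|_{L^2(\mu)}^2$. Taking square roots yields the claimed inequality.

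There is no real obstacle here: the result is a one-line consequence of the bound $h_\mu\le U$ and the defining relation $d\mu = h_\mu\, d\ell$. The only thing to be mindful of is that this uses the upper bound on $h_\mu$ (not the lower bound $L$), and that $f\in L^2(\mu)$ suffices for the right-hand side to be finite, so the lemma is stated in the sharpest direction one would want to use in subsequent estimates.
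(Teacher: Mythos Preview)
Your proof is correct and essentially identical to the paper's own argument: both expand $\|f\cdot h_\mu\|_{L^2(\ell)}^2=\int f^2 h_\mu^2\,d\ell$, absorb one factor of $h_\mu$ into $d\mu$, and bound the remaining $h_\mu$ by $U$. The only cosmetic difference is the order in which these two steps are applied.
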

\begin{proof}
\begin{equation*}
\|f\cdot h_\mu\|_{L^2(\ell)}^2=\int f^2\cdot h_\mu^2\ d\ell=\int f^2\cdot h_\mu\ d\mu\le U\int f^2\ d\mu= U\|f\|_{L^2(\mu)}^2
\end{equation*}
\end{proof}

\begin{lemma}
\label{averagelemma}
If $f\in L^1(\ell)$ satisfies $c\le f\le d$ then $c\le \Dx f\le d$
\end{lemma}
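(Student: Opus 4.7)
The plan is to expand $\Dx f(y)$ using its integral representation and then apply the pointwise bounds on $f$ together with the fact that $\ax(y-\cdot)$ is a probability density in its second argument. This is essentially the statement that a (stochastic) convex combination of values in $[c,d]$ remains in $[c,d]$.

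First, I would record the normalization $\int_X \ax(y-x)\ dx = 1$ for all $y$ in the relevant domain. For the choice $\ax=\mathbf{1}_{B_\epsilon(0)}/\ell(B_\epsilon(0))$ this is just the symmetry $\ax(y-x)=\ax(x-y)$ together with the fact that $\ax$ integrates to $1$ on $\mathbb{R}^d$; at boundaries of $M$ one invokes the stochastic modification of $\ax$ described in the footnote in Section 4.2. Thus $\ax(y-\cdot)$ is a probability density on $X$.

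Second, since $f-c\ge 0$ and $d-f\ge 0$ pointwise and $\ax\ge 0$, both integrands in
\begin{align*}
\Dx f(y)-c &= \int_X \ax(y-x)\bigl(f(x)-c\bigr)\ dx,\\
d-\Dx f(y) &= \int_X \ax(y-x)\bigl(d-f(x)\bigr)\ dx
\end{align*}
are non-negative, using the normalization to pull the constants $c$ and $d$ out of the integral. This yields $c\le \Dx f(y)\le d$ for all $y$ in the domain, as required.

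There is no real obstacle: the only non-trivial point is the dual-direction normalization $\int_X \ax(y-x)\ dx=1$, which is immediate from the concrete form of $\ax$ combined with the boundary convention already adopted in the paper.
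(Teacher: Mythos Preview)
Your proof is correct and is precisely an unpacking of the paper's one-line justification ``This follows since $\Dx$ is an averaging operator'': you make explicit the normalization $\int_X \ax(y-x)\,dx=1$ (via symmetry of the kernel and the boundary convention) and then use positivity of the kernel to sandwich $\Dx f$ between $c$ and $d$. There is no meaningful difference in approach.
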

\begin{proof}
This follows since $\Dx$ is an averaging operator.
\end{proof}

\begin{lemma}
\label{lemD1}$
\|\Dx f\|_{L^2(\ell)}\le \|f\|_{L^2(\ell)}.$
\end{lemma}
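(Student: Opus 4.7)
The plan is to deduce this bound directly from Lemma \ref{Lbound} applied to the integral operator with kernel $k(x,y) = \ax(y-x)$ on $L^2(X,\ell) \to L^2(X_\epsilon,\ell)$. All that is required is to verify the two integrability conditions (\ref{int1})--(\ref{int2}) with both reference measures equal to Lebesgue measure.

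First, for the $x$-integral: since $\ax = \mathbf{1}_{B_\epsilon(0)}/\ell(B_\epsilon(0)) \ge 0$ and $\int_{\mathbb{R}^d} \ax(z)\, dz = 1$, restricting the integration to $X \subset \mathbb{R}^d$ can only decrease the value, so $\int_X \ax(y-x)\, dx \le 1$ for every $y \in X_\epsilon$; hence (\ref{int1}) holds with $c_1 = 1$. Second, the defining stochasticity property recorded in Section \ref{diffsection}, namely $\int_{X_\epsilon} \ax(y-x)\, dy = 1$ for every $x \in X$, gives (\ref{int2}) with $c_2 = 1$. Lemma \ref{Lbound} then yields $\|\Dx\|_{L^2(\ell)\to L^2(\ell)} \le (c_1 c_2)^{1/2} \le 1$, which is precisely the stated inequality.

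There is essentially no obstacle here; one could equivalently give a two-line self-contained proof by writing
\begin{equation*}
(\Dx f(y))^2 \le \left(\int_X \ax(y-x)\, dx\right)\left(\int_X \ax(y-x) f(x)^2\, dx\right) \le \int_X \ax(y-x) f(x)^2\, dx
\end{equation*}
via Cauchy–Schwarz with the $\ax(y-x)^{1/2} \cdot \ax(y-x)^{1/2}f(x)$ splitting, and then integrating in $y$ and swapping the order of integration with Fubini to pick up the factor $\int_{X_\epsilon}\ax(y-x)\, dy = 1$. Either route is a direct reduction to the general bounded-kernel estimate already proved, so no new ideas are needed for this lemma.
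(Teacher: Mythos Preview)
Your proof is correct and follows essentially the same route as the paper: both apply Lemma \ref{Lbound} to the kernel $k(x,y)=\ax(y-x)=\mathbf{1}_{B_\epsilon(y)}(x)/\ell(B_\epsilon(0))$ with Lebesgue measure on $X$ and $X_\epsilon$, verifying that both marginal integrals are bounded by $1$. Your alternative Cauchy--Schwarz argument is simply the proof of Lemma \ref{Lbound} unpacked in this special case, so nothing new is introduced there either.
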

\begin{proof}
This follows directly from Lemma \ref{Lbound}, putting $k(x,y)=\textbf{1}_{B_\epsilon(y)}(x)/\ell(B_\epsilon(0))$ one has $\int_X k(x,y)\ dx=1$ for all $y\in X_\epsilon$ and $\int_{X_\epsilon} k(x,y)\ dy\le 1$ for all $x\in X$.
\end{proof}

\begin{lemma}
\label{lemB1}
$\|\mathcal{P}f\|_{L^2(\ell)}\le 1/A^{1/2}\|f\|_{L^2(\ell)}.$
\end{lemma}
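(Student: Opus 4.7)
The plan is to compute $\|\mathcal{P}f\|_{L^2(\ell)}^2$ directly using the explicit formula for the Perron--Frobenius operator of a diffeomorphism, and then bound the integrand using the uniform lower bound $A$ on $|\det DT|$.

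Since $T:M\to M$ is a diffeomorphism, the Perron--Frobenius operator has the closed form
\begin{equation*}
\mathcal{P}f(y) = \frac{f(T^{-1}y)}{|\det DT(T^{-1}y)|},
\end{equation*}
as noted earlier in the paper. Therefore
\begin{equation*}
\|\mathcal{P}f\|_{L^2(\ell)}^2 = \int_M \frac{f(T^{-1}y)^2}{|\det DT(T^{-1}y)|^2}\, d\ell(y).
\end{equation*}

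The next step is the substitution $y = Tx$, which gives $d\ell(y) = |\det DT(x)|\, d\ell(x)$ since $T$ is a diffeomorphism of $M$. This substitution produces
\begin{equation*}
\|\mathcal{P}f\|_{L^2(\ell)}^2 = \int_M \frac{f(x)^2}{|\det DT(x)|^2}\,|\det DT(x)|\, d\ell(x) = \int_M \frac{f(x)^2}{|\det DT(x)|}\, d\ell(x).
\end{equation*}

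Finally I would apply the hypothesis $|\det DT(x)|\ge A$ to obtain
\begin{equation*}
\|\mathcal{P}f\|_{L^2(\ell)}^2 \le \frac{1}{A}\int_M f(x)^2\, d\ell(x) = \frac{1}{A}\|f\|_{L^2(\ell)}^2,
\end{equation*}
and take square roots to conclude. There is no real obstacle here; the argument is a one-line change of variables once the explicit formula for $\mathcal{P}$ is invoked, and the only thing to note is that the hypothesis that $T$ is a diffeomorphism with $|\det DT|$ bounded below is exactly what makes both the change of variables and the pointwise bound legitimate.
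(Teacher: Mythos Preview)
Your proof is correct and follows essentially the same approach as the paper: write out $\mathcal{P}f$ via the explicit diffeomorphism formula, square and integrate, apply the change of variables $y=Tx$, and bound the resulting $1/|\det DT|$ factor below by $1/A$. The only cosmetic difference is that the paper writes the Jacobian factor after the substitution as $1/|\det DT^{-1}\circ T|$ before simplifying, whereas you write $|\det DT(x)|$ directly.
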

\begin{proof}
\begin{eqnarray*}
\int (\mathcal{P}f)^2\ d\ell&=&\int \left(\frac{f\circ T^{-1}}{|\det DT\circ T^{-1}|}\right)^2\ d\ell\\
&=&\int \left(\frac{f}{|\det DT|}\right)^2\cdot \frac{1}{|\det DT^{-1}\circ T|}\ d\ell\quad\mbox{by change of variables under $T$}\\
&=&\int \frac{f^2}{|\det DT|}\ d\ell\\
&\le& 1/A \int f^2\ d\ell
\end{eqnarray*}
\end{proof}

\begin{proof}[Proof of Proposition \ref{Lholderlemma}]
\quad
\begin{enumerate}
\item
Let $H_{1/2}(f)$ denote the $1/2$-H\"older exponent for $f$.
Since $\mathcal{L}_\epsilon f=\Dy\mathcal{P}\Dx(f\cdot h_\mu)/\Dy\mathcal{P}\Dx h_\mu$ we have
\begin{eqnarray*}
\lefteqn{H_{1/2}(\mathcal{L}_\epsilon f)\le H_{1/2}(\Dy\mathcal{P}\Dx(f\cdot h_\mu))\cdot \left|\frac{1}{\Dy\mathcal{P}\Dx h_\mu}\right|_\infty}\\
&&+H_{1/2}\left(\frac{1}{\Dy\mathcal{P}\Dx h_\mu}\right)\cdot |\Dy\mathcal{P}\Dx(f\cdot h_\mu))|_\infty.
\end{eqnarray*}
By
Lemmas \ref{lemB0}, \ref{lemD1}, and \ref{lemB1} we have
$$\|\mathcal{P}\Dx(f\cdot h_\mu)\|_{L^2(\ell)}\le \|\mathcal{P}\|_{L^2(\ell)}\|\Dx(f\cdot h_\mu)\|_{L^2(\ell)}\le (U/A)^{1/2}\|f\|_{L^2(\mu)}$$ and so applying Lemma \ref{holderlemma} we have $$H_{1/2}(\Dy\mathcal{P}\Dx(f\cdot h_\mu))\le (U/A)^{1/2}\|f\|_{L^2(\mu)}C(d)/\epsilon^{(d+1)/2}$$ where $C(d)$ is from Lemma \ref{holderlemma}.
Moreover, since $h_{\mu_\epsilon}=\Dx h_\mu$ is bounded below and above by $L$ and $U$, respectively (by Lemma \ref{averagelemma}) we have $h_{\nu'_\epsilon}=\mathcal{P}h_{\mu_\epsilon}$ is bounded below and above by $L/B$ and $U/A$ respectively.
Finally, applying Lemma \ref{averagelemma} again, we have $A/U\le |1/\Dy \mathcal{P}\Dx h_\mu|\le B/L$.

For the second term, note
\begin{eqnarray*}
H_{1/2}\left(\frac{1}{\Dy h_{\nu'_\epsilon}}\right)&\le&\frac{H_{1/2}(\Dy h_{\nu'_\epsilon})}{(\min \Dy h_{\nu'_\epsilon})^2}\\
&\le&\frac{\|h_{\nu'_\epsilon}\|_{L^2(\ell)}}{(\min h_{\nu'_\epsilon})^2}\cdot\frac{C(d)}{\epsilon^{(d+1)/2}}\qquad\mbox{by Lemma \ref{holderlemma} and Lemma \ref{averagelemma}}\\
&\le&\frac{(\max h_{\nu'_\epsilon})^{1/2}}{(\min h_{\nu'_\epsilon})^2}\cdot\frac{C(d)}{\epsilon^{(d+1)/2}}\qquad\mbox{as $\|h_{\nu'_\epsilon}\|_{L^1(\ell)}=1$ we have $\|h_{\nu'_\epsilon}\|_{L^2(\ell)}\le (\max h_{\nu'_\epsilon})^{1/2}$} \\
&\le& \frac{(U/A)^{1/2}}{(L/B)^2}\cdot C(d)/\epsilon^{(d+1)/2}
\end{eqnarray*}
Finally, to bound $|\Dy\mathcal{P}\Dx(f\cdot h_\mu))|_\infty$ we note that since $\int f\ d\mu=0$ we have $\int \Dy\mathcal{P}\Dx(f\cdot h_\mu)\ d\ell=0$ since $\Dy, \Dx,$ and $\mathcal{P}$ preserve $\ell$-integrals.
Thus, $|\Dy\mathcal{P}\Dx(f\cdot h_\mu))|_\infty\le H_{1/2}(\Dy\mathcal{P}\Dx(f\cdot h_\mu))|\diam(M)|^{1/2}$.

Putting this all together we have
\begin{eqnarray*}
\lefteqn{H_{1/2}(\mathcal{L}_\epsilon f)\le(U/A)^{1/2}\ell(X_\epsilon)\|\alpha\|_\infty\cdot(B/L)\|f\|_{L^2(\mu)}C(d)/\epsilon^{(d+1)/2}}\\
&&+\frac{(U/A)^{1/2}}{(L/B)^2}\cdot C(d)/\epsilon^{(d+1)/2}\cdot (U/A)^{1/2}\|f\|_{L^2(\mu)}C(d)/\epsilon^{(d+1)/2} |\diam(M)|^{1/2}.
\end{eqnarray*}
\item
Let $L(f)$ denote the Lipschitz exponent for $f$.
Since $\mathcal{L}_\epsilon f=\Dy\mathcal{P}\Dx(f\cdot h_\mu)/\Dy\mathcal{P}\Dx h_\mu$ we have
$$L(\mathcal{L}_\epsilon f)\le L(\Dy\mathcal{P}\Dx(f\cdot h_\mu))\cdot \left|\frac{1}{\Dy\mathcal{P}\Dx h_\mu}\right|_\infty+L\left(\frac{1}{\Dy\mathcal{P}\Dx h_\mu}\right)\cdot |\Dy\mathcal{P}\Dx(f\cdot h_\mu))|_\infty.$$

 We have that $|\mathcal{P}\Dx(f\cdot h_\mu)|\le (U/A)\|f\|_\infty$ and applying Lemma \ref{lipschitzlemma} we have $L(\Dy\mathcal{P}\Dx(f\cdot h_\mu))\le (U/A)\|f\|_\infty C_L(d)/\epsilon$ where $C_L(d)$ is from Lemma \ref{lipschitzlemma}.
Similarly, we have $A/U\le |1/\Dy \mathcal{P}\Dx h_\mu|\le B/L$.

For the second term, note
\begin{eqnarray*}
L\left(\frac{1}{\Dy h_{\nu'_\epsilon}}\right)&\le&\frac{L(\Dy h_{\nu'_\epsilon})}{(\min \Dy h_{\nu'_\epsilon})^2}\\
&\le&\frac{\max h_{\nu'_\epsilon}}{(\min h_{\nu'_\epsilon})^2}\cdot\frac{C_L(d)}{\epsilon}\qquad\mbox{by Lemma \ref{lipschitzlemma} and Lemma \ref{averagelemma}}\\
&\le& \frac{(U/A)}{(L/B)^2}\cdot C_L(d)/\epsilon
\end{eqnarray*}
Finally, to bound $|\Dy\mathcal{P}\Dx(f\cdot h_\mu))|_\infty$ we note that since $\int f\ d\mu=0$ we have $\int \Dy\mathcal{P}\Dx(f\cdot h_\mu)\ d\ell=0$ since $\Dx, \Dy,$ and $\mathcal{P}$ preserve $\ell$-integrals.
Thus, $|\Dy\mathcal{P}\Dx(f\cdot h_\mu))|_\infty\le L(\Dy\mathcal{P}\Dx(f\cdot h_\mu))|\diam(M)|$.

Putting this all together we have
\begin{equation*}
L(\mathcal{L}_\epsilon f)\le (U/A)(B/L)\|f\|_\infty C_L(d)/\epsilon+\frac{(U/A)^2}{(L/B)^2}\|f\|_\infty  |\diam(M)| C_L(d)^2/\epsilon^2
\end{equation*}
\end{enumerate}
\end{proof}

\begin{lemma}
\label{koopmanbound}
Consider $\mathcal{K}:L^2(Y'_\epsilon,\ell)\to L^2(X_\epsilon,\ell)$. One has $\|\mathcal{K}g\|_{L^2(X_\epsilon,\ell)}\le \|g\|_{L^2(Y'_\epsilon,\ell)}/\sqrt{A}$.
\end{lemma}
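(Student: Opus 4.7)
The plan is to reduce to a direct change-of-variables computation, exploiting the uniform lower bound $|\det DT| \ge A$ on $X_\epsilon$. Since the Koopman operator is a pure pullback, the squared $L^2$-norm $\|\mathcal{K}g\|_{L^2(X_\epsilon,\ell)}^2 = \int_{X_\epsilon} g(T(x))^2\, d\ell(x)$ can be pushed forward to an integral over $Y'_\epsilon = T(X_\epsilon)$ via the substitution $y = T(x)$, which introduces the Jacobian factor $1/|\det DT(T^{-1}(y))|$.

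First I would write
\begin{equation*}
\|\mathcal{K}g\|_{L^2(X_\epsilon,\ell)}^2 = \int_{X_\epsilon} g(T(x))^2\, d\ell(x) = \int_{Y'_\epsilon} g(y)^2\, \frac{d\ell(y)}{|\det DT(T^{-1}(y))|},
\end{equation*}
which uses that $T$ is a diffeomorphism from $X_\epsilon$ onto $Y'_\epsilon$. Then I would apply the bound $|\det DT| \ge A > 0$ (which by hypothesis holds on $X_\epsilon$, hence on the range of $T^{-1}$ restricted to $Y'_\epsilon$) to obtain $1/|\det DT(T^{-1}(y))| \le 1/A$, yielding
\begin{equation*}
\|\mathcal{K}g\|_{L^2(X_\epsilon,\ell)}^2 \le \frac{1}{A}\int_{Y'_\epsilon} g(y)^2\, d\ell(y) = \frac{1}{A}\|g\|_{L^2(Y'_\epsilon,\ell)}^2.
\end{equation*}
Taking square roots finishes the proof.

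There is no real obstacle; the only thing worth being careful about is that the integral on $X_\epsilon$ transforms cleanly to one on $Y'_\epsilon = T(X_\epsilon)$ with no leftover boundary contribution, which is guaranteed by $T$ being a diffeomorphism. This is essentially the same mechanism as the bound $\|\mathcal{P}f\|_{L^2(\ell)} \le A^{-1/2}\|f\|_{L^2(\ell)}$ proved in Lemma \ref{lemB1}, reflecting the duality between $\mathcal{P}$ and $\mathcal{K}$.
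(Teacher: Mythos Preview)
Your proof is correct and is essentially identical to the paper's own argument: both compute $\int_{X_\epsilon} g(Tx)^2\,dx$, change variables to $Y'_\epsilon$ picking up the Jacobian $1/|\det DT(T^{-1}y)|$, and bound this by $1/A$. The only cosmetic difference is that you work with the squared norm and take a square root at the end, whereas the paper carries the square root throughout.
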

\begin{proof}
\begin{eqnarray*}
\|\mathcal{K}g\|_{L^2(X_\epsilon,\ell)}&=&\left(\int_{X_\epsilon} (g(Tx))^2\ dx\right)^{1/2} \\
&=&\left(\int_{Y'_\epsilon} (g(x))^2/|\det DT(T^{-1}x)|\ dx\right)^{1/2}\\
&\le&(1/\sqrt{A})\left(\int_{Y'_\epsilon} (g(x))^2\ dx\right)^{1/2}=\|g\|_{L^2(Y'_\epsilon,\ell)}/\sqrt{A}.
\end{eqnarray*}
\end{proof}

\begin{proof}[Proof of Proposition \ref{Lstarholderlemma}]
Note that $\mathcal{L}^*_\epsilon=\Dx^*\mathcal{K}\Dy^*$.
We first claim the $L^2$-norm of $\mathcal{K}\Dy^*$ is $1/\sqrt{A}$.
By Lemma \ref{Lbound}, putting $X=Y'_\epsilon$, $Y=Y_\epsilon$, and $k(x,y)=\mathbf{1}_{B_\epsilon(y)}(x)/\ell(B_\epsilon(y))$, we have that $\|\Dy g\|_{L^2(Y_\epsilon,\ell)}\le \|g\|_{L^2(Y'_\epsilon,\ell)}$.
This follows since $\int_{Y'_\epsilon} \mathbf{1}_{B_\epsilon(y)}(x)/\ell(B_\epsilon(y))\ dx\le 1$ for $y\in Y_\epsilon$, and $\int_{Y_\epsilon} \mathbf{1}_{B_\epsilon(x)}(y)/\ell(B_\epsilon(x))\ dy=1$ for $x\in Y'_\epsilon$.
By Lemma \ref{koopmanbound} the claim follows.

We now consider $|\Dx^* f(x)-\Dx^* f(y)|$ for $f\in L^2(X_\epsilon,\ell)$.
Because of the symmetry of the kernel $k(x,y)=\mathbf{1}_{B_\epsilon(y)}(x)/\ell(B_\epsilon(y))$, the only difference between $\Dx$ and $\Dx^*$ is the domain of integration (respectively $X$ and $X_\epsilon$).
The bound of Lemma \ref{holderlemma} thus also applies to $\Dx^*$.
Setting $f=\mathcal{K}\Dy^*g$ we have
$$|\Dx^*\mathcal{K}\Dy^*g(x)-\Dx^*\mathcal{K}\Dy^*g(y)|\le (C(d)/\sqrt{A})(1/\epsilon^{(1+d)/2})\|g\|_{L^2(Y_\epsilon,\ell)}\cdot\|x-y\|^{1/2}.$$
\end{proof}

%
%


\begin{lemma}
\label{localgloballipschitz}
Let $X\subset \mathbb{R}^n$ compact and $F:X\to \mathbb{R}$.
If $$\limsup_{\|\gamma\|\to 0}\frac{|F(x+\gamma)-F(x)|}{\|\gamma\|}\le C\mbox{ for all $x\in X$},$$
then $F$ is globally Lipschitz with Lipschitz constant $C$.
\end{lemma}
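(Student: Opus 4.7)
The plan is to reduce the global Lipschitz estimate to a one-dimensional statement along line segments, using the pointwise limsup hypothesis at each point of the segment to propagate the bound. For this reduction I will assume implicitly that $X$ is convex (or at least that any two points of $X$ can be joined by a segment lying in $X$); in the applications of this lemma the relevant domain is an $\epsilon$-neighbourhood in $\mathbb{R}^d$ which can be taken convex, so this is harmless. Note first that the limsup hypothesis immediately implies $F$ is continuous at every $x_0 \in X$, since for $\|\gamma\|$ sufficiently small one has $|F(x_0+\gamma)-F(x_0)| \le (C+1)\|\gamma\|$.

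Fix $x,y \in X$ and $\epsilon>0$, and define $\phi:[0,1]\to\mathbb{R}$ by $\phi(t)=F((1-t)x+ty)$. Put
$$S = \{s\in[0,1] : |\phi(s)-\phi(0)| \le (C+\epsilon)\,s\,\|y-x\|\}.$$
Clearly $0\in S$, and $S$ is closed in $[0,1]$ by continuity of $\phi$. Let $s^* = \sup S \in S$. The key step is to show $s^*=1$. Suppose for contradiction that $s^*<1$, and apply the hypothesis at the point $z = (1-s^*)x + s^*y \in X$: there exists $\delta>0$ with
$$|F(z+\gamma)-F(z)| \le (C+\epsilon)\|\gamma\|\quad\text{for all }\|\gamma\|<\delta.$$
Choose $h>0$ so small that $h\|y-x\| < \delta$ and $s^*+h\le 1$, and take $\gamma = h(y-x)$. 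This yields $|\phi(s^*+h)-\phi(s^*)| \le (C+\epsilon)\,h\,\|y-x\|$. Combining with $s^*\in S$ via the triangle inequality gives $s^*+h \in S$, contradicting the maximality of $s^*$. Hence $s^*=1$, so $|F(y)-F(x)| = |\phi(1)-\phi(0)| \le (C+\epsilon)\|y-x\|$, and letting $\epsilon\to 0$ produces the Lipschitz bound with constant exactly $C$.

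The main obstacle is the mild connectivity issue above: without some convexity or path-connectedness assumption, the statement is not quite true (consider $X$ consisting of two isolated points). The cleanest fix is to insist $X$ be convex in the statement of the lemma; alternatively, by compactness of $X$ one can cover $X$ with finitely many open balls on which the local linear bound holds, and chain the estimate through intersections. In the uses of this lemma within the proof of Lemma \ref{lipschitzlemma}, the limsup bound is established pointwise on an $\epsilon$-neighbourhood of $X$ (which in applications is an open set containing straight segments between nearby points), so applying the argument on a convex enlargement and restricting back to $X$ suffices.
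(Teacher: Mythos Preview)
Your proof is correct (under the convexity caveat you explicitly flag) and takes a genuinely different route from the paper's. The paper argues via compactness: it extracts a finite subcover of $X$ by balls $B_{\Delta(\varepsilon,x)}(x)$ on which the local bound $|F(x+\gamma)-F(x)|<(C+\varepsilon)\|\gamma\|$ holds, then, given $x,y\in X$, explicitly constructs a chain of collinear points $y_0=x,\,y_1,\ldots,y_L=y$ along the segment $[x,y]$ with each consecutive pair $y_\ell,y_{\ell+1}$ lying in a common ball of the cover, and telescopes the local estimates using collinearity to get $|F(x)-F(y)|\le(C+\varepsilon)\|x-y\|$.

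Your supremum (continuity-method) argument is more economical: it bypasses the finite-cover bookkeeping altogether and does not actually invoke compactness, only that the segment $[x,y]$ lies in $X$ so that the pointwise hypothesis is available at $z=(1-s^*)x+s^*y$. Amusingly, the ``chain through a finite cover'' alternative you mention for non-convex $X$ is precisely the paper's argument. Both proofs share the same tacit geometric requirement---the paper's construction also presumes the segment is covered by balls centered at points of $X$---so your explicit acknowledgement of the convexity issue is, if anything, an improvement in rigor over the original.
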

\begin{proof}
Note $\limsup_{\|\gamma\|\to 0}|F(x+\gamma)-F(x)|/\|\gamma\|=\lim_{\tilde{\gamma}\to 0}\sup\{|F(x+\gamma)-F(x)|/\|\gamma\|: \|\gamma\|<\tilde{\gamma},\gamma\neq 0\}.$
Thus given $\varepsilon>0$ there is a $\Delta=\Delta(\varepsilon,x)>0$ such that $|F(x+\gamma)-F(x)|/\|\gamma\|<C+\varepsilon$ for all $\gamma\neq 0$ with $\|\gamma\|<\Delta$.
Form an open cover of $X$ as $\{B_{\Delta(\varepsilon,x)}(x):x\in X\}$.
By compactness of $X$ we can find a finite subcover.

Consider arbitrary $x,y\in X$ and write $y=x+\gamma$ (note $\gamma$ is arbitrary from now on and need not satisfy $\|\gamma\|<\Delta$).
Draw a line segment from $x$ to $x+\gamma$;  this line segment $\{x+\theta\gamma: 0\le \theta\le 1\}$ passes through a subcollection of open balls from our finite subcover.
Denote by $X'$ the finite set of centres of the open balls in our subcollection.


We now trace out the line segment again, identifying a finite sequence of points $y_\ell$ and ball centres $x_\ell$ as we go.
Begin at $x$, set $y_0=x$ and choose an $x_0\in X'$ so that $x\in B_{\Delta(\varepsilon,x_0)}$.
Now increase $\theta$ until $\|x_0-(x+\theta\gamma)\|=0.9\Delta(\varepsilon,x_0)$.
If $x+\theta\gamma$ lies in some $B_{\Delta(\varepsilon,x_1)}, x_1\in X', x_1\neq x_0$, then set $y_1=x+\theta\gamma$;  otherwise, increase $\theta$ until $x+\theta\gamma$ lies in $B_{\Delta(\varepsilon,x_0)}\cap B_{\Delta(\varepsilon,x_1)}$, $x_1\in X'$, $x_1\neq x_0$ and set $y_1=x+\theta\gamma$.
Repeat the procedure:  in general, we have a $y_\ell\in B_{\Delta(\varepsilon,x_{\ell})}$ and we increase $\gamma$ until $\|x_{\ell}-(y_\ell+\theta\gamma)\|=0.9\Delta(\varepsilon,x_{\ell})$.
If $y_\ell+\theta\gamma$ lies in some $B_{\Delta(\varepsilon,x_{{\ell+1}})}, x_{\ell+1}\in X', x_{\ell+1}\neq x_\ell,\ldots,x_0$, then set $y_{\ell+1}=y_\ell+\theta\gamma$;  otherwise, increase $\theta$ until $y_\ell+\theta\gamma$ lies in $B_{\Delta(\varepsilon,x_{\ell})}\cap B_{\Delta(\varepsilon,x_{\ell+1})}$, $x_{\ell+1}\in X'$, $x_{\ell+1}\neq x_\ell,\ldots,x_0$, and again set $y_{\ell+1}=y_\ell+\theta\gamma$.
Finally we make a step from $y_{L-1}$ to $y_L:=x+\gamma$ where $y_{L-1},x+\gamma\in B_{\Delta(\varepsilon,x_{L-1})}$.

By construction, $y_0\in B_{\Delta(\varepsilon,x_{0})}$, $y_\ell\in B_{\Delta(\varepsilon,x_{\ell-1})}\cap B_{\Delta(\varepsilon,x_{\ell})}$ for $1\le\ell\le L-1$, and $y_L\in B_{\Delta(\varepsilon,x_{L-1})}$.
Thus $y_{\ell},y_{\ell+1}\in B_{\Delta(\varepsilon,x_{\ell})}$ for $\ell=0,\ldots,L-1$, we may estimate
\begin{eqnarray*}
|F(x)-F(x+\gamma)|&=&\left|\sum_{\ell=0}^{L-1} F(y_\ell)-F(y_{\ell+1})\right|\\
&\le&\sum_{\ell=0}^{L-1} |F(y_\ell)-F(y_{\ell+1})|\\
&\le&(C+\varepsilon)\sum_{\ell=0}^{L-1} \|y_\ell-y_{\ell+1}\|\\
&=&(C+\varepsilon)\|y_0-y_L\|\\
&=&(C+\varepsilon)\|\gamma\|,
\end{eqnarray*}
where the penultimate equality follows since the $y_{\ell}$ are collinear.
As $x,y\in X$ were arbitrary the result follows.
\end{proof}

\section{Acknowledgements}
GF is grateful to Kathrin Padberg-Gehle for feedback on an earlier draft, and to George Haller for posing the question of frame-invariance.

\end{document}